\numberwithin{equation}{section}
\theoremstyle{theorem}
\newtheorem{theorem}{Theorem}[section]
\newtheorem{lemma}[theorem]{Lemma}
\newtheorem{proposition}[theorem]{Proposition}
\theoremstyle{definition}
\newtheorem{definition}[theorem]{Definition}
\theoremstyle{remark}
\newtheorem{remark}[theorem]{\bf Remark}
\newtheorem{example}[theorem]{\bf Example}
\def\bigtimes{\mathop{\raisebox{-1.5pt}{\LARGE $\times$}}\limits}
\def\per{.}
\def\HarvardComma{}
\newcounter{aucount}
\newif\ifedplural
\newif\ifper\pertrue
\def\au#1#2{{#1 #2}}
\def\lau#1#2{{#1 #2},}
\def\ed#1#2{\ifnum\theaucount=0\relax\fi{#1 #2}\addtocounter{aucount}{1}}
\def\led#1#2{\ifnum\theaucount=0\relax\edpluralfalse\else\edpluraltrue\fi{#1
    #2} (\editorname.),\setcounter{aucount}{0}}
\def\editorname{\ifedplural Eds\else Ed\fi}
\def\et{\ifnum\theaucount=1\else\HarvardComma\fi{} and\ }
\def\ti#1{#1,\ifper\fi\pertrue}
\def\bti{\@ifnextchar[\bbti\bbbti}
\def\bbti[#1]#2{\emph{#2}, #1,}
\def\bbbti#1{\emph{#1},}
\def\z{\@ifnextchar[\zz\zzz}
\def\zz[#1]#2#3#4#5{\perfalse\emph{#2} \textbf{#3}, #4 \ifx
  @#5@\relax\else (#5)\fi [#1]\ifper\per\fi\pertrue} 
\def\zzz#1#2#3#4{\emph{#1} \textbf{#2}, #3 \ifx @#4@\relax\else
  (#4)\fi\ifper\per\fi\pertrue}
\def\pub{\@ifstar\pubstar\pubnostar}
\def\pubnostar{\@ifnextchar[\@@pubnostar\@pubnostar}
\def\@@pubnostar[#1]#2#3#4{#2, #3, #4, #1\ifper\per\fi\pertrue}
\def\@pubnostar#1#2#3{#1, #2, #3\ifper\per\fi\pertrue}
\def\pubstar[#1]#2#3#4{\perfalse #2, #3, #4 [#1]\per\pertrue}
\begin{document}

\title[Almost repetitive Delone sets]{Dynamical properties of  \\ almost repetitive Delone sets}
\author[D.\ Frettl\"oh]{Dirk Frettl\"oh}
\address{Technische Fakult\"at,
  Universit\"at Bielefeld, Universit\"atsstra{\ss}e 25, 33501 Bielefeld, Germany}
\email{dirk.frettloeh@math.uni-bielefeld.de}

\author[C.\ Richard]{Christoph Richard}        
\address{Department f\"ur Mathematik,
  Universit\"at Erlangen-N\"urnberg,
  Cauerstra\ss{}e 11,
  91058 Erlangen, Germany}
\email{richard@math.fau.de}

\begin{abstract}
We consider the collection of uniformly discrete point sets in Euclidean space equipped with the vague topology.
For a point set in this collection, we characterise minimality of an associated dynamical system by almost repetitivity of the point set. We also provide linear versions of almost repetitivity which lead to uniquely ergodic systems. Apart from linearly repetitive point sets, examples are given by periodic point sets with almost periodic modulations, and by point sets derived from primitive substitution tilings of finite local complexity with respect to the Euclidean group with dense tile orientations.
\end{abstract}

\maketitle

%
\section{Point Sets and Dynamical Systems}\label{sec1}
%

Non--periodic point sets, which still display some regularity, are interesting objects in discrete geometry. Such sets have been intensively
studied in the context of uniformly discrete subsets $P$ of Euclidean space $M=\mathbb R^\mathsf{d}$.  A useful device in that situation 
is the \emph{hull of $P$}, i.e., the orbit closure
\begin{displaymath}
X_P:=\overline{\{tP\,| \,t\in T\}},
\end{displaymath}
where $T$ is a topological group such as $\mathbb R^\mathsf{d}$ or $E(\mathsf{d})$, the group of Euclidean motions, acting continuously on $M$ 
from the left. Here the closure is taken with respect to a suitable topology on the space of uniformly discrete subsets of $M$. With the induced
action of $T$ on $X_P$, the hull can be regarded as a topological dynamical system $(X_P,T)$.
 Regularity of $P$ is then reflected in properties of its hull such as minimality or unique ergodicity.

These properties may of course depend on the topology or on the group action.  A frequently studied topology on the space of uniformly discrete point sets 
is well adapted to point sets arising from tilings \cite{gs, Ru89, RaWo92}.  This so--called \emph{local matching topology} is generated by the metric
\begin{displaymath}
\begin{split}
    d_{LM}(P,P') &:=\min\Big\{ \tfrac{1}{\sqrt{2}}\,, 
    \inf\big\{ \varepsilon>0\,|\, \exists x,x'\in B_\varepsilon \text{~such that} \\
    & (xP) \cap B_{1/\varepsilon} = P'\cap B_{1/\varepsilon}
    \text{~and~} P \cap B_{1/\varepsilon} = (x'P')\cap B_{1/\varepsilon}
    \big\}\Big\}.
\end{split}
\end{displaymath}
Here $T=\mathbb R^\mathsf{d}$ acts on $M=\mathbb R^\mathsf{d}$ canonically, and $B_s$ is the open ball of radius $s>0$ about some fixed reference point in $M$, see e.g.~\cite{LeMo02}. One may say that two point sets are close in this topology, if they agree -- after some small translation -- on a large ball about some fixed center. 
Attention is often restricted to point sets of so--called \emph{finite local complexity} (FLC), since many point sets derived from substitution tilings such as the non--periodic Penrose tiling  \cite{Ro96} share this property.  In this context, a geometric characterisation of minimality is \emph{repetitivity} of the point set \cite[Thm~3.2]{LaPl03}, and a geometric characterisation of unique ergodicity is \emph{uniform pattern frequencies} of the point set \cite[Thm~2.7]{LeMo02}. In particular, linear repetititivity implies uniform pattern frequencies \cite[Thm~6.1]{LaPl03}, hence also unique ergodicity. The above characterisations have been extended to more general groups $T$ and to point spaces $M$ more general than Euclidean space in \cite[Prop~4.16]{Yok05} and in \cite[Prop~2.32]{MR}. 

Let us compare these results to tiling space dynamical systems of finite local complexity. There one considers the collection of all tilings built from translated prototiles, where some matching rules have to be satisfied. A local matching topology for tilings turns this collection into a compact topological space. Dynamical properties of this space with respect to the induced group action can then be studied combinatorially by exploiting how large patches are built from smaller ones. A primitive substitution of finite local complexity leads to repetitive tilings, and hence to a minimal tiling space dynamical system, see \cite{S} and \cite[Thm~5.8]{Ro04}. Moreover, such substitutions result in uniform patch frequencies, which implies unique ergodicity of the tiling space dynamical system \cite[Thm~6.1]{Ro04}. As has been shown recently \cite[Thm~3.8]{CS}, substitution matrices can be used to parametrise the simplex of invariant probability measures over the tiling space dynamical system. Whereas this indeed leads to a characterisation of unique ergodicity, it is not obvious to us how to interpret this condition geometrically.
The above properties have also been studied in the considerably more general context of fusion tilings \cite{FS2} of finite local complexity. If the tiling space dynamical system is topologically transitive, then it can be compared to a corresponding point set dynamical system. This is the case for substitutions or fusion rules that are primitive. 

In this article our main focus is on point sets rather than on tilings. We ask which point sets of \emph{infinite local complexity} may still have a minimal or uniquely ergodic hull. Previous results on point sets in this direction appear in \cite[Thm~2.6(i)]{BBG}, \cite[Thm~3(b)]{BaLe05} and \cite[Thm~3.1(c)]{LeRi07}. In order to have a compact hull, we use the vague topology, also called \emph{local rubber topology}, see \cite[Chapter~2.1]{MR} for a discussion of the historical background. It is generated by the metric
\begin{displaymath}
    d_{LR}(P,P') :=\min\Big\{ \tfrac{1}{\sqrt{2}}\,, 
    \inf\big\{ \varepsilon>0\,|\, 
    P \cap B_{1/\varepsilon}\subseteq  (P')_\varepsilon 
    \text{~~and~~} P' \cap B_{1/\varepsilon} \subseteq (P)_{\varepsilon} \big\}
    \Big\},
  \end{displaymath}
where the ``thickened'' point set $(P)_\varepsilon := \bigcup_{p\in P} B_{\varepsilon}(p)$ is the set of points in $M$ lying within distance less than $\varepsilon$ to $P$.  (Here the triangle inequality rests on $(A)_\varepsilon\cap B\subseteq (A\cap (B)_\varepsilon)_\varepsilon$.) We may say that two point sets are close in the vague topology, if they \emph{almost} agree on a large ball about some fixed center. For point sets of finite local complexity and for a transitive and proper group action, the local rubber topology equals the local matching topology, since a point set of finite local complexity is locally rigid \cite[Lemma 2.27]{MR}.

Let us summarise and discuss the results of our article. In the local rubber topology, even without finite local complexity, minimality of the hull is equivalent to \emph{almost repetitivity} of the point set, see Theorem~\ref{charmin} below. Whereas a similar statement for $M=T=\mathbb R^\mathsf{d}$ already appears in \cite[Thm~2.6(i)]{BBG}, we provide a proof within the more general setting of \cite{MR}. 
Concerning unique ergodicity, we restrict to Euclidean space $M=\mathbb R^\mathsf{d}$, as our proofs crucially rely on box decompositions. Even in that situation, we cannot expect a geometric characterisation in terms of suitable pattern frequencies without further assumptions on the point set.  For uniformly discrete point sets, we show that \emph{almost linear repetitivity} ensures unique ergodicity with respect to $T=\mathbb R^\mathsf{d}$ and $T=E(\mathsf{d})$ in Theorem~\ref{thm:alruni}. In the case of finite local complexity, this implies the known result that linear repetitivity ensures unique ergodicity w.r.t.~$T=\mathbb R^\mathsf{d}$, see \cite[Thm~6.1]{LaPl03} and \cite[Cor~4.6]{DaLe01}. Our proof adapts the reasoning of \cite{DaLe01} and does not resort to pattern frequencies. Examples are periodic point sets with almost periodic modulations, see Example~\ref{ex:lap}.  As an extension, we show in Theorem~\ref{thm:alwruni} and Theorem~\ref{thm:alwrunied} that \emph{almost linear wiggle--repetitivity} also ensures unique ergodicity with respect to $T=\mathbb R^\mathsf{d}$ and $T=E(\mathsf{d})$. This is the main result of our article.  

As an application of our general results above, we consider primitive substitution tiling spaces of finite local complexity w.r.t.~the Euclidean group with dense tile orientations. Whereas this comprises tiling spaces of infinite local complexity w.r.t.~$\mathbb R^\mathsf{d}$ such as those of pinwheel tilings \cite{R95b} and variants \cite{Sa1, CoRa98, F08}, it does not include spaces of tilings of infinite local complexity with fault lines, compare \cite{FS}.  We show in Theorem~\ref{thm:pin-wigrep} that such tilings are linearly wiggle-repetitive, which then implies that the tiling space dynamical system with $\mathbb R^\mathsf{d}$-action is minimal and uniquely ergodic. This is stated in Theorem~\ref{unicirc}, which combines  and extends the well-known results \cite{R95b} and \cite[Thm~3.1]{So97}. Note that minimality already follows from a recent result on fusion tilings  of infinite local complexity \cite[Prop~3.1]{FS3}, see also \cite[Prop~3.2]{FS2}. We would like to remark that our approach is complementary. Whereas the previous approaches use combinatorial properties of the substitution to infer dynamical properties directly, we follow a geometric viewpoint by extracting those repetitivity properties which cause minimality and unique ergodicity. In particular, we do not resort to patch frequencies.

In the context of general fusion tiling dynamical systems of infinite local complexity, a primitive fusion rule may lead to almost repetitive tilings. A geometric characterisation of unique ergodicity in this context seems beyond the scope of this article. As a first step, one may study uniquely ergodic systems some of whose members fail to be almost linearly wiggle-repetitive.

In order to review old and motivate new notions of repetitivity by examples, we discuss substitution tilings of finite local complexity with respect to the Euclidean group with dense tile orientations in the following section. In Section 3, we study almost repetitivity for point sets in the general setting of \cite{MR}.  Section 4 specialises to Euclidean space and discusses implications of almost linear repetitivity with respect to $T=\mathbb R^\mathsf{d}$. Section 5 is devoted to a study of almost wiggle--repetitivity within the Euclidean setting. In the last section, our results on point set dynamical systems are applied to the tiling spaces of Section~\ref{chap:subs}. 

\section{Substitution Tilings with Dense Tile Orientations}
\label{chap:subs}

We consider self-similar substitution tilings in Euclidean space $\mathbb R^\mathsf{d}$ in this section, where $\mathsf{d}\in\mathbb N$. A corresponding setup, where translated tiles are identified, has been worked out in detail in \cite{So97,S,Ro04,F08}. As in \cite{RaWo92, R95b}, compare also \cite{FS2}, we want to identify tiles that are equal up to Euclidean motion.  Since this requires some adaption of the above setup, we give a detailed presentation.

Let a \emph{tile} $S$ be a subset of Euclidean space homeomorphic to a closed unit ball of full dimension.  A \emph{(tile) packing} is a (countable) collection of tiles having mutually disjoint interior. The \emph{support} of a  packing $\mathcal C$ is the set $\mathrm{supp}(\mathcal C):=\bigcup_{S\in\mathcal C} S$. A \emph{tiling} $\mathcal{T}$ is a packing which covers Euclidean space, i.e., $\mathrm{supp}(\mathcal{T})=\mathbb R^\mathsf{d}$.

A \emph{patch} is a finite packing. If a patch $\mathcal P$ is contained in a packing $\mathcal C$, we say that $\mathcal P$ is a \emph{patch of $\mathcal C$}. 
A patch $\mathcal P$ of $\mathcal C$ is called the \emph{$s$-patch of $\mathcal C$ centered in $x$}, if $\mathcal P=\{ S \in \mathcal C \, | \, S \subseteq \overline{B}_s(x) \}$, with $B_s(x)$ the open ball of radius $s$ about $x$. We also speak of an \emph{$s$-patch of $\mathcal C$} in that situation.

Often tilings are built from equivalent copies of finitely many fundamental tiles. To describe this, we fix a subgroup $G$ of $E(\mathsf{d})$ containing $\mathbb R^\mathsf{d}$. Two tiles $S',S$ are called \emph{$G$-equivalent} if $S'=gS$ for some $g\in G$. Likewise, two packings $\mathcal C',\mathcal C$ are called $G$-equivalent if $\mathcal C'=g\mathcal C$ for some $g\in G$, where $g\mathcal C=\{gS\,|\,S\in\mathcal C\}$.  We fix a non-empty finite set $\mathcal{F}$ of mutually $G$-inequivalent tiles, which are called \emph{prototiles}\footnote
{Sometimes it is necessary to distinguish tile types even though they are $G$-equivalent. This can be done by adding a label to the tiles \cite{So97}. Then one needs to write ``tile $(S,i)$'' rather than just ``tile $S$''. For the sake of clarity we only consider $G$-inequivalent prototiles here. 
}. 
A packing is called an \emph{$(\mathcal{F},G)$-packing} if every tile in the packing is $G$-equivalent to some prototile of $\mathcal F$. The collection of all $(\mathcal{F},G)$-packings is denoted by $\mathcal C_{(\mathcal{F},G)}$.

An important class of tilings are substitution tilings. These are generated from prototiles by some inflate-and-dissect rule as in Figure \ref{fig:pin}. 
\begin{figure}
\includegraphics[width=90mm]{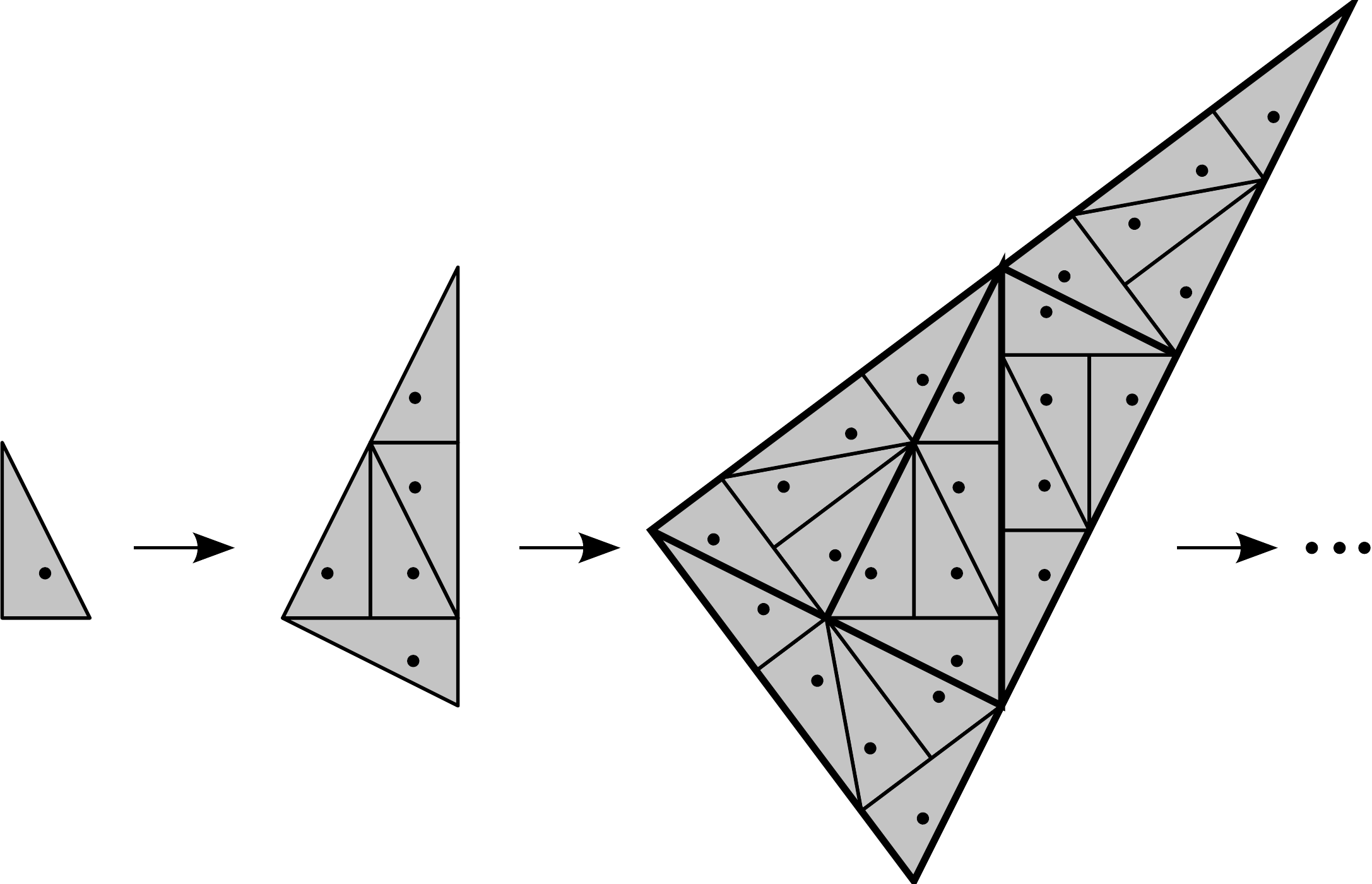} 
\caption{A substitution with only one prototile. The point in the first triangle is the rotation center. Iterating the
  substitution yields a unique tiling of the plane, a pinwheel tiling \cite{R95b}. It is fixed under the
  substitution. \label{fig:pin}}
\end{figure}
A \emph{(self-similar tile) substitution} $\sigma$ on $(\mathcal F,G)$ with prototile set $\mathcal{F}=\{S_1,\ldots,S_m\}$ is given by $(\mathcal{F},G)$-patches $\sigma(\{S_1\}),\ldots,\sigma(\{S_m\})$,
some $\lambda>1$ and some rotation $r_0\in O(\mathsf{d})$ about the origin, such that 
\begin{displaymath}
\mathrm{supp}(\sigma(\{S_i\}))=r_0\cdot\lambda S_i
\end{displaymath}
for every $i$. Whereas in $\mathsf{d}\le \mathsf{2}$ the rotation $r_0$ may be arbitrary, we require $r_0$ to be the identity in $\mathsf{d}>\mathsf{2}$, see also below.
The factor $\lambda$ is called the {\em substitution factor}. Let $n_{ij}\in\mathbb N_0$ denote the number of 
tiles $G$-equivalent to $S_i$ in $\sigma(\{S_j\})$. The matrix $M_{\sigma} = 
(n_{ij})_{1 \le i,j \le m}$ is called the {\em substitution matrix}. The
substitution $\sigma$ is called {\em primitive} if $M_{\sigma}$ is primitive, i.e., if some power of $M_{\sigma}$ is positive. 

The above definition of $\sigma$ extends naturally to a map on the set $\mathcal C_{(\mathcal{F},G)}$ of $(\mathcal{F},G)$-packings into itself, which we denote by $\sigma$ again: Let $S$ be a tile $G$-equivalent to some prototile $S_j$. Then $S=x+r\cdot S_j$ for some translation $x$ and some rotation or rotation-reflection $r$ about the origin. The substitution procedure of Figure \ref{fig:pin} is implemented by defining the $(\mathcal{F},G)$-patch
\begin{displaymath}
\sigma(\{S\}):=\lambda r_0 \cdot x+r\cdot\sigma(\{S_j\}).
\end{displaymath}
We also have $\mathrm{supp}(\sigma(\{S\}))=r_0\cdot\lambda\, S$, where we use  commutativity of the group $O(\mathsf{d})$ for $\mathsf{d}\le\mathsf{2}$. For any $(\mathcal{F},G)$-packing $\mathcal C$ we then define $\sigma(\mathcal C):=\bigcup_{S\in\mathcal C}\sigma(S)$. Using $\mathrm{supp}(\sigma(\mathcal C))=r_0\cdot\lambda\,\mathrm{supp}(\mathcal C)$, one shows that $\sigma(\mathcal C)$ is indeed an $(\mathcal{F},G)$-packing. 
Hence the map $\sigma:\mathcal C_{(\mathcal{F},G)}\to\mathcal C_{(\mathcal{F},G)}$ thus obtained is well defined. For any two $(\mathcal{F},G)$-packings $\mathcal C'$ and $\mathcal C$, it can be shown that $\mathcal C'$ and $\mathcal C$ are $G$-equivalent if and only if $\sigma(\mathcal C')$ and $\sigma(\mathcal C)$ are $G$-equivalent.

For $k\in\mathbb N_0$, we also consider $k$-fold iterates of $\sigma$, with $\sigma^0$ the identity. For a tile $S$ which is $G$-equivalent to some prototile, we call $\sigma^k(\{S\})$ the \emph{$k^\text{th}$-order supertile of $S$}. We say that an $(\mathcal{F},G)$-packing \emph{has a $k^\text{th}$-order supertile of type $j$} if a $G$-equivalent copy of the supertile $\sigma^k(\{S_j\})$ is contained in the packing. 

\begin{definition} \label{def:substil}
Let $\sigma$ be a substitution on $(\mathcal F,G)$. An $(\mathcal{F},G)$-patch is \emph{legal} if it is contained in some supertile. A {\em substitution tiling} is a tiling such that every of its patches is legal. The set $X_\sigma=X_{(\sigma,\mathcal F,G)}$ of all substitution tilings is called the \emph{tiling space of $(\sigma, \mathcal F,G)$}. 
\end{definition}

\begin{remark}\label{legal}
(i) Our definition of tiling space is adapted to deal with substitutions of finite local complexity with respect to the Euclidean group as in Definition~\ref{def:FLC} below. It differs from that in \cite{FS3}, which is more natural when also dealing with tilings of infinite local complexity with respect to the Euclidean group. 

(ii) If a patch $\mathcal{P}$ is legal, then also $\sigma(\mathcal{P})$ is legal. Note that we allow for supertiles of order zero in the definition of a legal patch, in contrast to \cite{S}. For primitive substitutions, both definitions are equivalent.  

(iii) To analyse whether $X_\sigma$ is nonempty, consider \cite[p.~229]{lup} any sequence $j_1,\ldots, j_{m+1}$ of tile types, such that for $k\in\{1,\ldots,m\}$ the supertile $\sigma(\{S_{j_{k}}\})$ contains a tile of type $j_{k+1}$. Since there are only $m$ types, some type $j$ occurs twice in the sequence, and hence some supertile $\sigma^k(\{S_j\})$ has a tile $S$ of type $j$. Write $S_j=g\cdot S$, with $g=(x,r)$ and $g\cdot S=x+r\cdot S$ and consider the sequence $((g\sigma^k)^n(\{S_j\}))_{n\in\mathbb N_0}$ of legal patches. If $S$ is contained in the interior of $\mathrm{supp}(\sigma^k(\{S_j\}))$, the members of this sequence finally coincide on arbitrarily large patches about the origin. In that case the sequence gives rise to a legal tiling, which is fixed under $\sigma^k$. As the example Figure \ref{fig:pin} shows, the latter condition can be somewhat relaxed. 
\end{remark}

\begin{lemma} \label{lem:suptile}
Let $\sigma$ be a substitution on $(\mathcal{F},G)$. Fix $k\in\mathbb N_0$ and choose $r_k>0$ such that every $k^\text{th}$-order supertile support is contained in a ball of radius $r_k$. Then for any tiling $\mathcal{T}\in X_\sigma$ the following hold.
\begin{itemize}
\item[i)] $\mathcal{T}$  can be partitioned into $k^\text{th}$-order supertiles.
\item[ii)] Every $2r_k$-patch of $\mathcal{T}$ contains some $k^\text{th}$-order supertile of $\mathcal{T}$.
\end{itemize}
\end{lemma}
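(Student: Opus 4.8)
The plan is to exploit the structure established in the preceding discussion, namely that every tiling $\mathcal{T}\in X_\sigma$ has all of its patches legal, i.e.\ contained in supertiles of arbitrarily high order (using primitivity to get arbitrarily large supertiles, though the statement as given does not even seem to need primitivity for part i) — legality already gives supertiles of every order once the prototile set is finite). For part i), first I would argue that $\mathcal{T}$ can be partitioned into $k^\text{th}$-order supertiles. The idea is to exhaust $\mathbb R^\mathsf{d}$ by an increasing sequence of balls $B_{R_n}$ with $R_n\to\infty$. For each $n$, the patch of $\mathcal{T}$ lying in a slightly enlarged ball is legal, hence sits inside a supertile $\sigma^{\ell_n}(\{S\})$ with $\ell_n\ge k$; applying $\sigma^{\ell_n-k}$ to the $(\mathcal{F},G)$-patch $\sigma(\{S\})$ shows that $\sigma^{\ell_n}(\{S\})$ is itself partitioned into $k^\text{th}$-order supertiles. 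So the tiles of $\mathcal{T}$ meeting $B_{R_n}$ are grouped into $k^\text{th}$-order supertiles, at least those well inside the ball. A compactness/diagonal argument then lets me pass to the limit: since $\mathcal{F}$ is finite, there are only finitely many $k^\text{th}$-order supertile shapes up to $G$-equivalence, so the grouping of any fixed tile $S_0\in\mathcal{T}$ stabilises along a subsequence, and the stabilised groupings are consistent (two $k^\text{th}$-order supertiles of $\mathcal{T}$ sharing a tile must coincide, because the partition of a single large supertile into $k^\text{th}$-order supertiles is unique). This yields a well-defined global partition of $\mathcal{T}$ into $k^\text{th}$-order supertiles.

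For part ii), I would use the radius bound directly. Given any point $x\in\mathbb R^\mathsf{d}$, consider the $2r_k$-patch of $\mathcal{T}$ centered at $x$, i.e.\ all tiles of $\mathcal{T}$ contained in $\overline{B}_{2r_k}(x)$. By part i), the tile of $\mathcal{T}$ containing $x$ (or any tile meeting a small ball about $x$) lies in a unique $k^\text{th}$-order supertile $\mathcal{S}$ of the partition; its support has diameter at most $2r_k$ and contains a point within distance $r_k$ of $x$, so $\mathrm{supp}(\mathcal{S})\subseteq \overline{B}_{2r_k}(x)$ once one picks $x$ to be, say, the centre of the ball guaranteed by the choice of $r_k$ for that supertile — more carefully, I would center the $2r_k$-patch at the centre of the circumscribing ball of the supertile containing a designated tile, so that the whole supertile, being within distance $r_k$ of that centre, is contained in $\overline{B}_{2r_k}$ of it and hence is a subpatch of the $2r_k$-patch. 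This shows every $2r_k$-patch taken around an appropriate centre contains a full $k^\text{th}$-order supertile; and since such centres are dense enough (every point of $\mathbb R^\mathsf{d}$ is within $r_k$ of the centre of the circumscribing ball of some supertile in the partition), every $2r_k$-patch of $\mathcal{T}$ — meaning every patch $\{S\in\mathcal{T}\,|\,S\subseteq\overline{B}_{2r_k}(y)\}$ for arbitrary $y$ — contains a $k^\text{th}$-order supertile of $\mathcal{T}$.

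The main obstacle I anticipate is the limiting argument in part i): making precise that the groupings into $k^\text{th}$-order supertiles obtained from the finite balls $B_{R_n}$ are mutually consistent and patch together to a global partition without leaving any tile uncovered or doubly covered. The key technical facts needed are the uniqueness of the decomposition of a high-order supertile into $k^\text{th}$-order supertiles (so that overlapping local decompositions agree) and the finiteness of $\mathcal{F}$ (so that a diagonal subsequence stabilises the local picture around every tile). A subtlety worth handling with care is whether legality of a patch really forces it into a supertile of order $\ge k$ rather than just order $\ge 1$: here one invokes that for finite $\mathcal{F}$ one may always enlarge the enclosing supertile, or, for primitive $\sigma$, that supertiles of every order occur inside supertiles of higher order, so any legal patch sits inside arbitrarily large supertiles. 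Once these points are in place, part ii) is essentially a one-line geometric consequence of part i) and the definition of $r_k$.
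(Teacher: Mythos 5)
Your overall architecture for part i) — exhaust $\mathbb R^\mathsf{d}$ by balls, use legality to embed each central patch in a high-order supertile, read off the induced partition into $k^\text{th}$-order supertiles, and pass to a limit along a subsequence — is exactly the paper's, and part ii) matches the paper's covering argument. But the step you single out as the key technical fact, namely that ``two $k^\text{th}$-order supertiles of $\mathcal{T}$ sharing a tile must coincide, because the partition of a single large supertile into $k^\text{th}$-order supertiles is unique,'' is false in the generality of the lemma, which assumes neither primitivity nor non-periodicity. Take the substitution dividing the doubled unit square into four unit squares: the integer square tiling lies in $X_\sigma$, every aligned $2^k\times 2^k$ block is a $k^\text{th}$-order supertile, and two such blocks can overlap without coinciding; likewise the partitions induced on a fixed central patch by enclosing supertiles of different balls need not agree. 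The paper explicitly flags this in the remark following the lemma (``a partition satisfying i) may not be uniquely determined''), citing Solomyak's unique composition theorem, which requires non-periodicity. So consistency of the local groupings cannot be obtained from uniqueness; it must come from the extraction itself. The repair is what the paper does: for each fixed $n$ there are only finitely many partitions of the patch $\mathcal C_n$ into $k^\text{th}$-order supertiles plus a boundary patch, so a nested pigeonhole/diagonal argument yields a single subsequence along which the induced partitions are consistent by construction, with no uniqueness needed. If you intend your ``diagonal argument'' in that sense, you should say so and drop the uniqueness claim entirely, since as written your consistency step would fail.

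A secondary point: your justification that the legal patch sits inside a supertile of order at least $k$ is also shaky. ``Enlarging the enclosing supertile'' is not available for a general substitution (a supertile need not be contained in one of higher order), and primitivity is not assumed. The correct argument, used in the paper, is to choose the central patch so large that it cannot fit inside any supertile of order less than $k$ (their supports all lie in balls of radius $r_{k-1}$); legality then forces an enclosing supertile of order at least $k$. Part ii) is essentially correct as you state it, though it can be phrased more directly: every point lies in some supertile of the partition, hence within distance $r_k$ of a point $x_i$ with $\mathrm{supp}(\mathcal S)\subseteq\overline B_{r_k}(x_i)$, and $\overline B_{r_k}(x_i)\subseteq\overline B_{2r_k}(x)$.
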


\begin{remark}
As the following proof shows, a partition satisfying i) may not be uniquely determined, compare \cite[Thm~1.1]{S}.
\end{remark}

\begin{proof}
i). W.l.o.g.~fix $k\in\mathbb N$. Choose $n \in \mathbb N$ sufficiently large such that the $n$-patch $\mathcal C_{n}$ centered at the origin can not be patch of a supertile of order less than $k$. Thus $\mathcal C_{n}$ is, by definition, contained in some supertile of order at least $k$. 
But this supertile can, by definition, be partitioned into $k^\text{th}$-order supertiles. This induces a  partition $\pi({\mathcal C}_{n})$ of $\mathcal C_{n}$ into $k^\text{th}$-order supertiles of $\mathcal{T}$ and a \emph{boundary patch}, i.e., a patch which does not contain any $k^\text{th}$-order supertile. Now consider a sequence $(\pi({\mathcal C}_n))_{n}$ of such partitions. Since there are only finitely many partitions of the patch $\mathcal C_n$, one may choose a subsequence $(\pi({\mathcal C}_{n_\ell}))_{\ell}$ of $(\pi({\mathcal C}_n))_{n}$ of \emph{consistent partitions}, i.e., every $k^\text{th}$-order supertile in $\pi({\mathcal C}_{n_\ell})$ is also in $\pi({\mathcal C}_{n_m})$, if $m>\ell$. Hence $\mathcal{T}$ can be partitioned into $k^\text{th}$-order supertiles.

ii). By i), we can choose a partition of $\mathcal{T}$ into $k^\text{th}$-order supertiles. Choose  $\{x_i\,|\,i\in\mathbb N\}$ such that every $k^\text{th}$-order supertile support is contained in some ball $B_{r_k}(x_i)$. Then $(\overline{B}_{r_k}(x_i))_{i\in\mathbb N}$ covers Euclidean space, and every $r_k$-patch of $\mathcal{T}$ centered at $x_i$ contains some $k^\text{th}$-order supertile of $\mathcal{T}$. Take arbitrary $x\in\mathbb R^\mathsf{d}$. Then there is some $x_i$ such that 
$d(x,x_i)\le r_k$ and hence $\overline{B}_{r_k}(x_i)\subseteq \overline{B}_{2r_k}(x)$. Thus the $2r_k$-patch of $\mathcal{T}$ centered at $x$ contains the $r_k$-patch of $\mathcal{T}$ centered at $x_i$ and hence contains some $k^\text{th}$-order supertile of $\mathcal{T}$.
\end{proof}

\begin{lemma}\label{lem:suptile2}
Let $\sigma$ be a primitive substitution on $(\mathcal F,G)$. 
Then the following hold.
\begin{itemize} 
\item[i)] For every $k \in\mathbb N_0$ there is $K=K(k)\in\mathbb N_0$ such that
every supertile of order at least $K$ has every type of $\ell^\text{th}$-order supertile, for any $\ell\le k$.  
\item[ii)] Every tiling in $X_{\sigma}$ has all types of supertile of any order. 
\item[iii)] Every tiling in $X_\sigma$ contains an equivalent copy of every legal patch.

\end{itemize}

\end{lemma}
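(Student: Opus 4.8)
The plan is to reduce all three parts to primitivity of the substitution matrix $M_\sigma$ and to Lemma~\ref{lem:suptile}. First I would record the purely combinatorial input: a routine induction shows that $(M_\sigma^m)_{ij}$ equals the number of tiles $G$-equivalent to $S_i$ in $\sigma^m(\{S_j\})$. Since $M_\sigma$ is primitive and no column of it vanishes (each $\sigma(\{S_j\})$ is nonempty, having support $r_0\lambda S_j$), there is $p\in\mathbb N$ with $M_\sigma^m>0$ for \emph{every} $m\ge p$: once $M_\sigma^p>0$ one has $(M_\sigma^{p+1})_{ij}\ge\sum_k(M_\sigma)_{kj}\ge 1$, and this propagates to all higher powers. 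Hence $\sigma^m(\{S_j\})$ contains a tile $G$-equivalent to $S_i$ for all $i,j$ and all $m\ge p$.

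For (i) I would set $K(k):=p+k$. Given $N\ge K(k)$, $\ell\le k$ and a type $j$, write the $N$th-order supertile as $\sigma^N(\{S_j\})=\sigma^\ell\!\big(\sigma^{N-\ell}(\{S_j\})\big)$. Since $N-\ell\ge p$, the inner packing contains, for each $i$, a tile $G$-equivalent to $S_i$; applying $\sigma^\ell$ and using the compatibility of $\sigma$ with $G$ built into its extension to $\mathcal C_{(\mathcal F,G)}$ (a $G$-image of a prototile is sent to a $G$-image of the corresponding supertile), each such tile becomes a $G$-equivalent copy of $\sigma^\ell(\{S_i\})$ inside $\sigma^N(\{S_j\})$. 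This yields (i), including the case $\ell=0$, where ``$\ell$th-order supertile'' just means ``tile''.

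Part (ii) then follows by combining (i) with Lemma~\ref{lem:suptile}(i): a tiling $\mathcal T\in X_\sigma$ is nonempty because $\mathrm{supp}(\mathcal T)=\mathbb R^\mathsf{d}$, and by Lemma~\ref{lem:suptile}(i) applied with $K=K(k)$ it can be partitioned into $K(k)$th-order supertiles, so it contains at least one of them, of some type $j$; by (i) that supertile already contains $G$-equivalent copies of all types of $\ell$th-order supertile for every $\ell\le k$, hence so does $\mathcal T$. Part (iii) is immediate from (ii) and the definition of legality: a legal patch $\mathcal P$ sits inside some supertile, i.e.\ $\mathcal P\subseteq g_1\,\sigma^k(\{S_j\})$ for suitable $k$, $j$ and $g_1\in G$; by (ii) every $\mathcal T\in X_\sigma$ contains a $G$-equivalent copy $g_2\,\sigma^k(\{S_j\})\subseteq\mathcal T$ of that supertile, and then $g_2g_1^{-1}\mathcal P\subseteq\mathcal T$ is a $G$-equivalent copy of $\mathcal P$ inside $\mathcal T$.

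I do not expect a genuine obstacle here; the only thing requiring care is the bookkeeping with the group $G$ — systematically reading ``contains every type of supertile'' as ``up to $G$-equivalence'', and invoking the fact recorded in the setup that $\mathcal C'$ and $\mathcal C$ are $G$-equivalent iff $\sigma(\mathcal C')$ and $\sigma(\mathcal C)$ are. One should also be sure to use the strengthened form ``$M_\sigma^m>0$ for all $m\ge p$'' rather than merely ``$M_\sigma^p>0$'', since in (i) the exponent $N-\ell$ runs through a whole interval.
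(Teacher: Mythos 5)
Your proposal is correct and follows essentially the same route as the paper: positivity of a power of $M_\sigma$ gives part (i) via the factorisation $\sigma^N=\sigma^\ell\circ\sigma^{N-\ell}$, part (ii) follows from the supertile partition of Lemma~\ref{lem:suptile}(i), and part (iii) is immediate from (ii) and the definition of legality. Your extra care in justifying that $M_\sigma^m>0$ for \emph{all} $m\ge p$ (via the nonvanishing columns of $M_\sigma$) is a point the paper's proof silently assumes, so it is a welcome refinement rather than a deviation.
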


\begin{proof}

i). Since $\sigma$ is primitive, there is $n$ such that $(M_{\sigma})^n$ is positive, i.e., every $n^\text{th}$-order supertile contains an equivalent copy of each prototile. Fix $k\in\mathbb N_0$. Then for any $\ell\in\{0,\ldots,k\}$,  the matrix $(M_{\sigma})^{n+k-\ell}$ is positive as well, thus every $(k+n)^\text{th}$-order supertile contains an equivalent copy of every $\ell^\text{th}$-order supertile. Hence the statement follows with $K(k)=k+n$.

ii). Take $\mathcal{T} \in X_{\sigma}$ and consider arbitrary $k$. Due to Lemma~\ref{lem:suptile} i), one may partition $\mathcal T$ can into $K(k)^\text{th}$-order supertiles, with $K(k)$ as in i). Hence $\mathcal T$ has $k^\text{th}$-order supertiles of all types.

iii). This is an immediate consequence of ii).
\end{proof}

The following definition is useful when comparing different types of repetitivity.

\begin{definition}\label{def:FLC}
Let $G$ be a subgroup of $E(\mathsf{d})$ containing $\mathbb R^\mathsf{d}$.

\begin{itemize}
\item[i)] A tiling $\mathcal{T}$ has FLC w.r.t.~$G$, if for every $r>0$ the number of $G$-inequivalent $r$-patches in $\mathcal{T}$ is finite.
\item[ii)] Let $\sigma$ be a substitution on $(\mathcal F,G)$. Then $\sigma$ has \emph{finite local complexity} (FLC), if for every $r>0$ the number of $G$-inequivalent $r$-patches 
from all supertiles is finite.
\end{itemize}
\end{definition}

\begin{remark}
Fix $G=E(\mathsf{d})$. For a tiling $\mathcal{T}$ of polygons with a finite number of prototiles, it is not 
hard to see that $\mathcal{T}$ is of FLC, if the tiles in $\mathcal{T}$ meet full-face to full-face. See \cite{FR} for further criteria.  The  face-to-face criterion can 
be applied to pinwheel tilings, when viewing the triangles, after subdivision of their medium edge, as degenerate quadrangles.   Note that the pinwheel tilings do not have FLC w.r.t.~$G=\mathbb R^\mathsf{2}$. Examples in $\mathsf{d}=\mathsf{3}$ are quaquaversal tilings \cite{CoRa98}.
\end{remark}

\begin{definition}
Let $G$ be a subgroup of $E(\mathsf{d})$ containing $\mathbb R^\mathsf{d}$. Then a tiling $\mathcal{T}$ of Euclidean space is called

\begin{itemize}
\item[i)] \emph{weakly repetitive w.r.t.~$G$}, if for every patch $\mathcal{P}$ of $\mathcal{T}$ there exists $R=R(\mathcal{P})>0$ such that every $R$-patch of $\mathcal{T}$ contains a $G$-equivalent copy of $\mathcal{P}$.

\item[ii)] \emph{repetitive w.r.t.~$G$}, if for every $r>0$ there exists $R=R(r)>0$ such that every $R$-patch of $\mathcal{T}$ contains a $G$-equivalent copy of every $r$-patch of $\mathcal{T}$.

\item[iii)] \emph{linearly repetitive w.r.t.~$G$}, if $\mathcal{T}$ is repetitive and if one can choose $R(r)=\mathcal O(r)$ as $r\to\infty$.
\end{itemize}

\end{definition}

\begin{remark}\label{rem:rep}
(i)  It can be shown that weak repetitivity and FLC is equivalent to repetitivity, compare the proof of Proposition~\ref{thm:psubs-rep}.

(ii) The above notions of repetitivity appear in different forms under
different names in the literature: Repetitivity (and also its linear variant) has already been studied in the context of symbolic dynamics, where it is called recurrence \cite{MH38, MH40}.  In \cite{gs} and \cite{RaWo92}, weak repetitivity is called the local isomorphism property, where \cite{gs} prove weak repetitivity for Penrose tilings by proving linear repetitivity.
In \cite{bt}, weakly repetitive is called recurrent.
In \cite{thu}, repetitive is called quasi-homogeneous, in \cite{lup}
it is called quasiperiodic.  In \cite{S} and \cite{BBG}, weak repetitivity is called repetitivity, 
and a linear version of weak repetitivity is called strong repetitivity in \cite{S}.
The term repetitivity was possibly coined
by Danzer \cite{dan}, where it was used for linear repetitivity. 
In \cite{LaPl03}, a clear distinction is made between the different kinds of repetitivity, 
using the terminology above (on point sets which may arise from tilings). We will stick to that terminology throughout this article.
\end{remark}

The proof of the following result is standard, see e.g.~\cite{S} for $G=\mathbb R^\mathsf{d}$.

\begin{proposition} \label{thm:psubs-rep}
Let $\sigma$ be a primitive substitution on $(\mathcal F,G)$. Then for any tiling $\mathcal{T} \in X_\sigma$ the following hold.
\begin{itemize} 
\item[i)] $\mathcal{T}$ is weakly repetitive w.r.t.~$G$. 
\item[ii)] If $\sigma$ has also FLC, then $\mathcal{T}$ is repetitive w.r.t.~$G$.
\end{itemize}
\end{proposition}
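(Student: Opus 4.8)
The plan is to read part i) off the supertile structure supplied by Lemmas~\ref{lem:suptile} and~\ref{lem:suptile2}, and to deduce part ii) from part i) as indicated in Remark~\ref{rem:rep}(i): first promote the FLC hypothesis on $\sigma$ to FLC of the individual tiling $\mathcal{T}$ with respect to $G$, and then combine that finiteness with the weak repetitivity already established.

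For part i), fix a patch $\mathcal{P}$ of $\mathcal{T}$. Patches are finite and $\mathcal{T}\in X_\sigma$, so by Definition~\ref{def:substil} the patch $\mathcal{P}$ is legal, hence contained in a supertile $\sigma^k(\{S\})$ for some $k\in\mathbb{N}_0$ and some tile $S$ that is $G$-equivalent to a prototile $S_j$. Since $\sigma$ takes $G$-equivalent packings to $G$-equivalent packings, $\sigma^k(\{S\})$ is $G$-equivalent to $\sigma^k(\{S_j\})$, so after applying a suitable element of $G$ we may assume that $\mathcal{P}$ is $G$-equivalent to a subpatch of $\sigma^k(\{S_j\})$. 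Primitivity now enters through Lemma~\ref{lem:suptile2}~i): with $K=K(k)$, every supertile of order at least $K$ contains a $G$-equivalent copy of $\sigma^k(\{S_j\})$, and therefore of $\mathcal{P}$. Finally, by Lemma~\ref{lem:suptile}~i) partition $\mathcal{T}$ into $K^\text{th}$-order supertiles, choose $r_K>0$ so that every such supertile support lies in a ball of radius $r_K$, and invoke Lemma~\ref{lem:suptile}~ii): every $2r_K$-patch of $\mathcal{T}$ contains a $K^\text{th}$-order supertile of $\mathcal{T}$, which in turn contains a $G$-equivalent copy of $\mathcal{P}$. Hence $R(\mathcal{P}):=2r_K$ works.

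For part ii), the first step is to show that FLC of $\sigma$ forces $\mathcal{T}$ to have FLC with respect to $G$. Fix $r>0$ and let $\mathcal{P}$ be the $r$-patch of $\mathcal{T}$ centered at a point $x$. As above, $\mathcal{P}\subseteq g\,\sigma^k(\{S_j\})$ for some $g\in G$, some $k\in\mathbb{N}_0$ and some prototile $S_j$; since $g^{-1}$ is an isometry, $g^{-1}\mathcal{P}$ consists of tiles of $\sigma^k(\{S_j\})$ contained in $\overline{B}_r(g^{-1}x)$, so $g^{-1}\mathcal{P}$ is a subpatch of the $r$-patch of the supertile $\sigma^k(\{S_j\})$ centered at $g^{-1}x$. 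By FLC of $\sigma$ there are only finitely many $G$-equivalence classes of such $r$-patches of supertiles, and each of them, being a finite patch, has only finitely many subpatches; hence $\mathcal{P}$ lies in one of finitely many $G$-equivalence classes, so $\mathcal{T}$ has FLC with respect to $G$. The second step combines this with part i): given $r>0$, pick $G$-representatives $\mathcal{P}_1,\dots,\mathcal{P}_N$ of the $r$-patches of $\mathcal{T}$ and set $R(r):=\max_{1\le i\le N}R(\mathcal{P}_i)$ with $R(\mathcal{P}_i)$ as in part i). For any $x$, the $R(r)$-patch of $\mathcal{T}$ centered at $x$ contains the $R(\mathcal{P}_i)$-patch of $\mathcal{T}$ centered at $x$ for every $i$, hence a $G$-equivalent copy of each $\mathcal{P}_i$, and therefore a $G$-equivalent copy of every $r$-patch of $\mathcal{T}$. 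This is repetitivity with respect to $G$.

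I expect the delicate point to be the first step of part ii): an $r$-patch of $\mathcal{T}$ is a priori only a subpatch of some supertile, not itself an $r$-patch of that supertile, so one must route the argument through the inclusion of $g^{-1}\mathcal{P}$ into an $r$-patch of $\sigma^k(\{S_j\})$ and through the fact that a finite patch has only finitely many subpatches, in order to land among the finitely many local configurations furnished by the FLC hypothesis on $\sigma$. The remaining steps are a routine assembly of Lemmas~\ref{lem:suptile} and~\ref{lem:suptile2}, together with the observation that $\sigma$ respects $G$-equivalence.
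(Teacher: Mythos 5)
Your argument is correct and follows essentially the same route as the paper's own proof: part i) by combining Lemma~\ref{lem:suptile2}~i) with Lemma~\ref{lem:suptile}~ii), and part ii) by taking the maximum of the constants $R(\mathcal{P}_i)$ over a finite set of $G$-representatives of $r$-patches. Your extra care in part ii) -- observing that an $r$-patch of $\mathcal{T}$ is only a subpatch of an $r$-patch of a supertile, and that finiteness then follows because a finite patch has finitely many subpatches -- is a correct elaboration of a step the paper compresses into ``finite due to FLC''.
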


\begin{proof} 

i). Let $\mathcal{P}$ be a patch in $\mathcal{T}$. Then $\mathcal{P}$ is contained in some $k^\text{th}$-order supertile.  By Lemma \ref{lem:suptile2} i), there is $K=K(k)\in\mathbb N$ such that every $K^\text{th}$-order supertile contains a $G$-equivalent copy of every supertile up to $k^\text{th}$ order. By Lemma \ref{lem:suptile} ii), we can choose $R=R(\mathcal{P})>0$ such that every $R$-patch of $\mathcal{T}$ contains some $K^\text{th}$-order supertile. Hence it also contains a $G$-equivalent copy of $\mathcal{P}$.

ii).  Let $r>0$ be given. Choose a collection of mutually $G$-inequivalent $r$-patches of $\mathcal{T}$ of maximal cardinality. This collection $\{\mathcal{P}_i\,|\, i\in I\}$ is finite due to FLC. Choose $R=R(r)$ as the maximum of $R(\mathcal{P}_i)$ over $I$ in i). Then every $R$-patch of $\mathcal{T}$ contains a $G$-equivalent copy of any $r$-patch of $\mathcal{T}$. Thus $\mathcal{T}$ is repetitive w.r.t.~$G$.
\end{proof}

Linear repetitivity is proven for the Penrose tiling and $G=\mathbb R^\mathsf{2}$ in \cite{gs}. A version for primitive substitution tilings and $G=\mathbb R^\mathsf{d}$ can be deduced from \cite[Lemma~2.3]{S}. In order to prepare for Theorem~\ref{thm:pin-wigrep}, we give a version of that proof which also works in our situation. It uses supertile coronae, compare \cite{S}. Any $\ell^\text{th}$-order supertile $\mathcal S$ of a substitution has a canonical partition $\mathcal S_{(k)}$ into $k^\text{th}$-order supertiles, where $0\le k\le \ell$. For  any $k^\text{th}$-order supertile $\mathcal C_k$ in $\mathcal S_{(k)}$, consider the patch  $[\mathcal C_k]$ obtained 
from the union of all elements in $\mathcal S_{(k)}$ whose support has non-empty intersection with $\mathrm{supp}(\mathcal C_k)$. We call $[\mathcal C_k]$
the \emph{supertile corona of $\mathcal C_k$ (with respect to the supertile $\mathcal S$}).

\begin{remark}
The following properties of supertile coronae are essential for the proof below.
For any FLC substitution, there exists $\varrho_0>0$ such that for any two tiles $S,S'$ in any legal patch the condition $d(x,x')<\varrho_0$ for some $x\in S$ and some $x'\in S'$ implies that $S\cap S'\ne\varnothing$. (Otherwise pairs of non-intersecting tiles will get arbitrarily close in legal patches, which contradicts FLC.) For any ball 
$B_{\varrho_0}(x)$, consider any supertile $\mathcal S$ covering $B_{\varrho_0}(x)$, and take any tile $S$ of $\mathcal S$ containing $x$.
Then, by the above reasoning, $B_{\varrho_0}(x)$ is covered by the \emph{tile corona} $[\{S\}]$ of $\{S\}$.
More generally, consider any ball $B_{\varrho_k}(x)$, where $\varrho_k=\lambda^k\varrho_0$. Let $\mathcal S$ be any supertile of order $\ell\ge k$ which covers $B_{\varrho_k}(x)$. Let $\mathcal C_k$ be any $k^\text{th}$-order supertile in $\mathcal S_{(k)}$ whose support contains $x$. Then its supertile corona $[\mathcal C_k]$ covers $B_{\varrho_k}(x)$. This holds since, after $k$-fold canonical deflation of the supertile $\mathcal S$, the corresponding tile corona $[\{S\}]$, where $\sigma^k(\{S\})=\mathcal C_k$, is contained in some supertile by definition. Hence the $\varrho_0$-ball corresponding to $B_{\varrho_k}(x)$ is covered by $[\{S\}]$ by the above argument. After applying $\sigma^k$, we see that $B_{\varrho_k}(x)$ is indeed covered by $[\mathcal C_k]$.
\end{remark}

\begin{proposition} \label{thm:psubs-linrep}
Let $\sigma$ be a primitive substitution on $(\mathcal F,G)$ of FLC. 
Then every tiling in $X_{\sigma}$ is linearly repetitive w.r.t.~$G$.
\end{proposition}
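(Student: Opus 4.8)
The plan is to reduce everything to one geometric fact: for a tiling $\mathcal{T}\in X_\sigma$, every $r$-patch of $\mathcal{T}$ is, up to $G$-equivalence, a subpatch of a supertile whose order is $\log_\lambda r+O(1)$. Granting this, a linearly bounded repetitivity radius drops out of the primitivity lemmas already established. So fix $\mathcal{T}\in X_\sigma$; let $D$ bound the diameter of every prototile support, let $\lambda$ be the substitution factor, let $\varrho_0>0$ be the constant of the Remark preceding this statement (we may assume $\varrho_0\le D$), and fix $n$ with $(M_\sigma)^n$ positive. Since $\sigma$ has FLC, there are only finitely many $G$-equivalence classes of legal patches of support-diameter at most $3D$ — each such patch is a subpatch of a $3D$-patch of some supertile — and since every legal patch lies in some supertile, I let $c_0\in\mathbb N_0$ be the largest order of a supertile needed to contain all of them up to $G$.

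For the geometric fact, fix $r>0$ and choose $k=k(r)\in\mathbb N_0$ minimal with $\varrho_k:=\lambda^k\varrho_0>r$, so that $\lambda^k\le\lambda r/\varrho_0=O(r)$. Let $\mathcal{P}$ be the $r$-patch of $\mathcal{T}$ centered at some $x$; then $\mathrm{supp}(\mathcal{P})\subseteq\overline B_r(x)\subseteq B_{\varrho_k}(x)$. Running the consistent-partition construction of the proof of Lemma~\ref{lem:suptile}~i) about the point $x$, the partition of $\mathcal{T}$ into $k^{\text{th}}$-order supertiles used there is induced near $x$ from a single supertile $\mathcal{S}$ of order $\ge k$ whose support contains $B_{\varrho_k}(x)$. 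The Remark then applies with this $\mathcal{S}$: for the $k^{\text{th}}$-order supertile $\mathcal{C}_k\in\mathcal{S}_{(k)}$ with $x\in\mathrm{supp}(\mathcal{C}_k)$ one has $B_{\varrho_k}(x)\subseteq\mathrm{supp}([\mathcal{C}_k])$, so $\mathcal{P}$ is a subpatch of the supertile corona $[\mathcal{C}_k]$. Now deflate by $\sigma^{-k}$: the patch $[\mathcal{C}_k]$ is $G$-equivalent to $\sigma^k([\{S\}])$, where $[\{S\}]$ is the tile corona, in the legal patch $\sigma^{-k}\mathcal{S}$, of the tile $S$ corresponding to $\mathcal{C}_k$. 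This tile corona is a legal patch of support-diameter at most $3D$, hence $G$-equivalent to a subpatch of a supertile of order $\le c_0$; applying $\sigma^k$, which intertwines the $G$-action as recorded in Section~\ref{chap:subs}, shows that $[\mathcal{C}_k]$, and therefore $\mathcal{P}$, is $G$-equivalent to a subpatch of a supertile of order $\ell^*:=k+c_0$, with $\lambda^{\ell^*}=\lambda^{c_0}\lambda^k=O(r)$.

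To finish, by Lemma~\ref{lem:suptile2}~i) every supertile of order $K:=\ell^*+n$ contains a $G$-equivalent copy of every $\ell^*$-supertile, hence a $G$-equivalent copy of every $r$-patch of $\mathcal{T}$. With $r_K:=\lambda^K D$ bounding the radius of a ball containing any $K^{\text{th}}$-order supertile support, Lemma~\ref{lem:suptile}~ii) gives that every $2r_K$-patch of $\mathcal{T}$ contains a $K^{\text{th}}$-order supertile of $\mathcal{T}$, and therefore a $G$-equivalent copy of every $r$-patch of $\mathcal{T}$. Thus $R(r):=2r_K=2\lambda^{n}D\cdot\lambda^{\ell^*}=O(r)$ works; combined with repetitivity from Proposition~\ref{thm:psubs-rep}~ii), this yields linear repetitivity w.r.t.~$G$. (Note that $R(r)$ ends up depending only on $\sigma$, not on $\mathcal{T}$.)

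I expect the heart of the matter to be the geometric fact, and within it the clean production of the ``parent'' supertile $\mathcal{S}$ with $B_{\varrho_k}(x)\subseteq\mathrm{supp}(\mathcal{S})$ of order merely $\ge k$: the Remark only records the consequence, so one must extract such an $\mathcal{S}$ from the partition machinery of Lemma~\ref{lem:suptile}~i) (or, working entirely up to $G$-equivalence, realise a copy of $\mathcal{P}$ deep inside a large supertile of $\mathcal{T}$ and invoke the Remark there). One then has to keep careful track, via $\varrho_0$ and $D$, that the supertile coronae appearing are genuine patches of $\mathcal{T}$ of the claimed size. Everything past the geometric fact is routine bookkeeping with the lemmas already proved.
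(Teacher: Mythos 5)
Your proof is correct, and it rests on the same central device as the paper's: the supertile--corona covering argument of the Remark preceding the statement, together with the scaling $\varrho_k=\lambda^k\varrho_0$, which places every $r$-patch inside a $k^{\text{th}}$-order supertile corona with $\lambda^k=\mathcal O(r)$. Where you diverge is in how the repetition is then manufactured. The paper uses repetitivity (Proposition~\ref{thm:psubs-rep}~ii) upgraded by Lemma~\ref{lem:suptile2}~iii)) to fix a radius $R_0$ such that every legal $R_0$-patch contains a copy of every legal tile corona, and then scales this by deflation to get that every $R_k$-patch, $R_k=\lambda^kR_0$, contains a copy of every $k^{\text{th}}$-order supertile corona, yielding $R(r)=\lambda(R_0/\varrho_0)\,r$ directly. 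You instead push the corona into a supertile of bounded order: FLC gives finitely many $G$-classes of legal patches of support-diameter at most $3D$, hence a uniform order $c_0$ such that every tile corona embeds in a supertile of order at most $c_0$, so every $r$-patch sits ($G$-equivalently) inside a supertile of order $k+c_0$; primitivity (Lemma~\ref{lem:suptile2}~i)) and the covering statement (Lemma~\ref{lem:suptile}~ii)) then convert this into $R(r)=2\lambda^{k+c_0+n}D=\mathcal O(r)$. In effect you inline and quantify the proof of weak repetitivity rather than scaling a pre-established repetitivity radius; this costs some extra bookkeeping with $c_0$ and $n$, but it makes explicit that $R(r)$ depends only on $\sigma$ and not on the tiling, and isolates the real content as a logarithmic bound on the order of a supertile containing a given $r$-patch. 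Your insistence on producing a parent supertile $\mathcal S$ of order at least $k$ whose support actually \emph{covers} $B_{\varrho_k}(x)$, rather than one merely containing the $\varrho_k$-patch, is well placed --- the paper glosses over this point --- and your fix via a sufficiently large patch around $x$ is the right one.
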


\begin{proof} 
Consider an arbitrary tiling $\mathcal T\in X_\sigma$. Since $\mathcal{T}$ is repetitive w.r.t.~$G$ by Proposition~\ref{thm:psubs-rep} ii), there exists $R_0>0$ such that every $R_0$-patch of $\mathcal{T}$ contains an equivalent copy of every tile corona in $\mathcal{T}$. Then, by Lemma~\ref{lem:suptile2} iii), every legal $R_0$-patch contains an equivalent copy of every legal tile corona. As in the previous remark, we can conclude that every $R_k$-patch of $\mathcal{T}$, where $R_k=\lambda^k R_0$, contains an equivalent copy of every $k^\text{th}$-order supertile corona of $\mathcal{T}$. To see this, consider any ball $B_{R_k}(x)$. The corresponding $R_k$-patch of $\mathcal T$ centered in $x$ is, by definition, contained in some supertile $\mathcal S$, which we may w.l.o.g.~assume to be of order not less than $k$, since $\mathcal T$ is a tiling.  Now take the union $\mathcal C$ of all supertiles in $\mathcal S_{(k)}$ whose support has non-empty intersection with $B_{R_k}(x)$.  Then after $k$-fold canonical deflation of the supertile $\mathcal S$, the corresponding patch $\mathcal P$, where $\sigma^k(\mathcal P)=\mathcal C$,  is legal by definition, and its support contains a ball of radius $R_0$. But the corresponding $R_0$-patch is also legal and contains an equivalent copy of every legal tile corona. Hence the claim is seen to be true after applying $\sigma^k$.

Now consider any $\varrho_k$-patch of $\mathcal{T}$, where $\varrho_k=\lambda^k\varrho_0$ as in the previous remark. Since this patch is by definition contained in some supertile (w.l.o.g.~of order not less than $k$), it must be contained in some $k^\text{th}$-order supertile corona. Let $r>0$ be given. Take the smallest $k$ such that $\varrho_k>r$. Then by the arguments above, every $R_k$-patch of $\mathcal{T}$ contains an equivalent copy of every $k^\text{th}$-order supertile corona of $\mathcal{T}$, hence of every $\varrho_k$-patch of $\mathcal T$, and hence of every $r$-patch of $\mathcal{T}$. Since
\begin{displaymath}
R_k=\lambda^k R_0\le\lambda\frac{R_0}{\varrho_0}r,
\end{displaymath}
we may choose $R(r)=c\cdot r$, with $c=\lambda R_0/\varrho_0$, and linear repetitivity is shown.
\end{proof}

Choose a metric on $O(\mathsf{d})$ generating the standard topology.
The following property is crucial to infer dynamical properties of the tiling space.

\begin{definition}
A tiling $\mathcal T$ of Euclidean space is called
\begin{itemize}
\item[i)] \emph{wiggle--repetitive}, if for every $r>0$ and for every $\varepsilon>0$ there exists $R=R(r,\varepsilon)>0$ such that every $R$-patch of $\mathcal{T}$ contains an $E(\mathsf{d})$-equivalent copy of every $r$-patch of $\mathcal{T}$, with a corresponding rotation of distance less than $\varepsilon$ to the identity. 
\item[ii)] \emph{linearly wiggle--repetitive}, if it is wiggle--repetitive, and if for every $\varepsilon>0$ one can choose $R(r,\varepsilon)=\mathcal O(r)$ as $r\to\infty$, 
where the $\mathcal O$--constant may depend on $\varepsilon$.
\end{itemize}
\end{definition}

A weak version of wiggle--repetitivity is studied in \cite[Thm~6.3]{F08}. In order to strengthen that result, we need
some terminology. For a tile $S$, every $r\in O(\mathsf{d})$ such that $S=x+r\cdot S_j$ is called an \emph{orientation} of $S$. 
Call $D\subseteq H\subseteq  O(\mathsf{d})$ to be \emph{$\varepsilon$-dense
in $H$}, if every ball in $H$ of radius $\varepsilon$ has non-empty intersection with $D$. 
\begin{definition}
Fix $G=\mathbb R^\mathsf{d}\rtimes H$, with $H$ a subgroup of $O(\mathsf{d})$. Let $\sigma$ be a substitution on $(\mathcal F,G)$. We say that $\sigma$ has \emph{dense tile orientations (DTO) in $H$} if, for every $\varepsilon>0$ and for every $j$, there is some supertile with $\varepsilon$-dense orientations of tiles of type $j$ in $H$.
\end{definition}

A simple example is the chair tiling  \cite[Fig~10.1.5]{gs} with discrete $H=D_4$, the dihedral group of order eight. More interesting examples concern 
non--discrete subgroups of $O(\mathsf{d})$. For convenience we give the following handy characterisation of DTO in 
$O(\mathsf{2})$, which can be inferred from \cite[Prop~3.4]{F08}.

\begin{lemma}
Let $\sigma$ be a primitive substitution on $(\mathcal F, E(\mathsf{2}))$. Then the following are equivalent.
\begin{itemize}
\item[i)] $\sigma$ has dense tile orientations in $O(\mathsf{2})$.
\item[ii)] There exists a supertile containing two tiles of the same type, which are rotated against each other by an irrational angle.
\qed
\end{itemize}
\end{lemma}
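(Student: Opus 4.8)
The plan is to reduce the statement to the behaviour of the \emph{rotation parts} of $\sigma$, which are governed by a finite set of rotations, together with two standard facts about the compact group $O(\mathsf 2)$. Let $\mathcal R\subseteq O(\mathsf 2)$ be the finite set of all rotation parts $r$ occurring in a tile $x+r\cdot S_i$ of some first-order supertile $\sigma(\{S_j\})$. Iterating $\sigma$, the rotation part (i.e.\ orientation) of every tile in every supertile lies in the subsemigroup $\Gamma:=\langle\mathcal R\rangle$ of $O(\mathsf 2)$; conversely, since $\sigma$ is primitive, every sufficiently long product from $\mathcal R$ compatible with the tile-type adjacencies of $\sigma$ is realised as the orientation of a tile inside $\sigma^k(\{S_i\})$ for any prescribed type $i$ and all large $k$, and, fixing a primitivity exponent $n$, the set $\Gamma_j$ of orientations of type-$j$ tiles inside $\sigma^{kn}(\{S_j\})$ (as $k$ runs through $\mathbb N$) forms a genuine subsemigroup of $O(\mathsf 2)$. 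The two facts are: (a) a closed subsemigroup of a compact group is a subgroup, so $\overline\Gamma$ is a closed subgroup of $O(\mathsf 2)$ and hence is either finite or contains $SO(\mathsf 2)$; and (b) a finitely generated infinite subgroup of $SO(\mathsf 2)\cong\mathbb R/\mathbb Z$ is not a torsion group, hence contains a rotation by an irrational multiple of $2\pi$.

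For ii)$\,\Rightarrow\,$i), assume some supertile contains two tiles of a common type, say type $i$, whose relative isometry is a rotation by an irrational angle. Using primitivity I copy that supertile into $\sigma^K(\{S_j\})$ for an arbitrary prescribed type $j$ and then apply $\sigma^{n}$ to each of the two tiles; since $\sigma^{n}(\{S_i\})$ contains a type-$j$ tile and substituting a tile only conjugates and translates a fixed supertile, this yields two type-$j$ tiles inside some $\sigma^{K_j}(\{S_j\})$ whose relative rotation is still by an irrational angle; call it $\omega$ (replacing it by its inverse if necessary), and say the two orientations are $\tau_j$ and $\omega\tau_j$. Iterating $\sigma^{K_j}$ and composing rotation parts, $\sigma^{\ell K_j}(\{S_j\})$ contains type-$j$ tiles whose orientations include all $\ell$-fold products of $\tau_j$ and $\omega\tau_j$; when $\tau_j$ is orientation-preserving these are the elements $\tau_j^{\,\ell}\omega^{a}$ with $0\le a\le\ell$ (the orientation-reversing case being entirely analogous). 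For $\ell$ large the orbit $\{\omega^{a}\mid 0\le a\le\ell\}$ is $\varepsilon$-dense in $SO(\mathsf 2)$, since an irrational rotation orbit equidistributes, and hence so is its left translate by the isometry $\tau_j^{\,\ell}$; composing on the left with an orientation-reversing orientation of some type-$j$ tile (which exists by assumption) moves an analogous $\varepsilon$-dense set into the orientation-reversing coset. Thus $\sigma^{\ell K_j}(\{S_j\})$ has $\varepsilon$-dense type-$j$ orientations in $O(\mathsf 2)$ once $\ell\ge\ell(\varepsilon)$, and since $j$ and $\varepsilon$ were arbitrary, $\sigma$ has DTO in $O(\mathsf 2)$.

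For i)$\,\Rightarrow\,$ii) I argue by contraposition and assume ii) fails, so inside every supertile any two tiles of the same type have relative rotation of finite order. If $\overline\Gamma$ is finite, there are only finitely many tile orientations at all and DTO fails at once. Otherwise $\overline\Gamma\supseteq SO(\mathsf 2)$; by fact (b) there is a rotation $g\in\Gamma$ of infinite order, realised as an orientation inside some supertile. Pushing $g$ through $\sigma$ together with connecting pieces furnished by primitivity should produce a single supertile containing two same-type tiles whose relative rotation equals $g$ multiplied by finitely many finite-order rotations coming from those connecting pieces -- still of infinite order, since $SO(\mathsf 2)$ is abelian -- contradicting the assumption. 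Put differently, the failure of ii) ought to confine each semigroup $\Gamma_j$ to a fixed coset of a fixed finite subgroup of $O(\mathsf 2)$: the pertinent relative rotations are generated, up to conjugation by elements of $\overline\Gamma$, by the \emph{finite} set of relative rotations already present in first-order supertiles, and, using abelianness of $SO(\mathsf 2)$, a subgroup of the finitely generated group $\langle\mathcal R\mathcal R^{-1}\rangle$ all of whose rotations have finite order is itself finite; hence each individual supertile carries a number of type-$j$ orientations bounded independently of the supertile order, so $\varepsilon$-density fails for small $\varepsilon$.

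The step I expect to be the main obstacle is precisely this last one: tracking how relative orientations of same-type tiles propagate under $\sigma$ \emph{across different tile types}, so as to extract from the finiteness of the first-order data a bound that does not grow with the supertile order. This is the technical heart of the argument, and is exactly what \cite[Prop~3.4]{F08}, from which the lemma is inferred, supplies. A minor additional point is that ``DTO in $O(\mathsf 2)$'' (rather than merely in $SO(\mathsf 2)$) also requires orientation-reversing rotation parts to occur among the tiles of supertiles -- automatic in the examples under consideration, such as the pinwheel -- which is subsumed in the same reasoning applied to the orientation-reversing component of $O(\mathsf 2)$.
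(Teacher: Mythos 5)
The paper does not actually prove this lemma: it carries a \qed{} in the statement and is justified only by the sentence that it ``can be inferred from \cite[Prop~3.4]{F08}''. So there is no in-paper argument to compare against, and your proposal has to stand on its own. Your treatment of ii)\,$\Rightarrow$\,i) is essentially the standard argument (propagate the irrational relative rotation $\omega$ into a supertile $\sigma^{K_j}(\{S_j\})$ containing two type-$j$ tiles, iterate to obtain orientations $\omega^{a}\tau_j^{\ell}$ for $0\le a\le\ell$, and use that a finite initial segment of an irrational rotation orbit is $\varepsilon$-dense), and this is the direction the paper actually uses later (for the pinwheel in Theorem~\ref{thm:pin-wigrep}); that part is sound. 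But the proposal is not a complete proof, for two concrete reasons.

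First, the direction i)\,$\Rightarrow$\,ii) is conceded rather than proved: you write that pushing an infinite-order rotation through $\sigma$ ``should produce'' the required pair of same-type tiles and that this step ``is exactly what \cite[Prop~3.4]{F08} supplies''. The missing step is genuinely nontrivial: an irrational rotation in the finitely generated group $\langle\mathcal R\rangle$ is a word in the generators \emph{and their inverses}, whereas orientations occurring in supertiles are positive words along type-compatible chains, and relative rotations of same-type tiles are quotients of two such words of equal length rooted at the same supertile. One must show that if all of the latter are torsion then the relevant group of relative rotations is finite (the ``$\Gamma$ versus $\Delta$'' distinction: the group of \emph{orientations} can be infinite while all \emph{relative} rotations within supertiles are trivial, e.g.\ if every substitution rule rotates all tiles by one fixed irrational angle). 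Your contraposition gestures at this but does not close it. Second, in ii)\,$\Rightarrow$\,i) the parenthesis ``(which exists by assumption)'' asserting an orientation-reversing orientation of some type-$j$ tile is unjustified: hypothesis ii) supplies only an irrational \emph{rotation}, and if no reflections occur among the tile orientations at all, every orientation lies in $SO(\mathsf 2)$ and can never be $\varepsilon$-dense in $O(\mathsf 2)$ for small $\varepsilon$. You would need either to observe that when $\tau_j$ is itself a reflection the odd-length products land $\varepsilon$-densely in the reflection coset, or to note that the lemma is really to be read as density modulo the component/point-group issue as in \cite{F08}; as written, this step is a gap (arguably inherited from an imprecision in the statement, but your proof relies on it).
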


\begin{example}
The pinwheel tiling has DTO in $O(\mathsf{2})$, as can be seen in the $2^\text{nd}$-order supertile using the above criterion.
Other planar examples appear in \cite{Sa1,F08}. Quaquaversal tilings have DTO in $SO(\mathsf{3})$, as follows from \cite[Thm.~1]{CoRa98}.
\end{example}

The following theorem gives a sufficient condition for wiggle--repetitivity.

\begin{theorem} \label{thm:pin-wigrep}
Fix $G=\mathbb R^\mathsf{d}\rtimes H$, with $H$ a subgroup of $O(\mathsf{d})$. 
Let  $\sigma$ be a primitive substitution on $(\mathcal{F},G)$ with FLC w.r.t.~$G$ and DTO w.r.t.~$H$. 
Then every tiling in $X_\sigma$ is linearly wiggle--repetitive.
\end{theorem}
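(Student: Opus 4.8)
The plan is to re-run the proof of Proposition~\ref{thm:psubs-linrep}, carrying orientations along so that the $G$-equivalent copies it produces can be taken with rotation part arbitrarily close to the identity; dense tile orientations is exactly what will make this possible. Fix $\mathcal T\in X_\sigma$ and $\varepsilon>0$. Since $\varepsilon$-density of a subset of $O(\mathsf{d})$ is unaffected on passing to an equivalent metric, one may assume the chosen metric on $O(\mathsf{d})$ is bi-invariant. The heart of the matter will be an \emph{oriented base case}: there is $R_0=R_0(\varepsilon)>0$ such that every legal $R_0$-patch contains, for every legal tile corona type $\tau$, an $E(\mathsf{d})$-equivalent copy of $\tau$ whose rotation part lies within $\varepsilon$ of the identity.

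To prove this one fixes one prototile type $j_0$. By DTO there is a supertile $\mathcal S_\varepsilon$, of some order $n(\varepsilon)$, containing tiles of type $j_0$ whose orientations form an $\varepsilon$-dense subset $D$ of $H$. By Lemma~\ref{lem:suptile2}~i) one chooses $K_0\in\mathbb N$, independent of $\varepsilon$, so large that every $K_0^\text{th}$-order supertile contains in its interior a $G$-equivalent copy of every legal tile corona. Now apply $\sigma^{K_0}$: since $\sigma$ sends a tile of orientation $\rho$ to a supertile of orientation $\rho$ (by the defining formula of Section~\ref{chap:subs}; this is where one invokes the commutativity of $O(\mathsf{d})$ used there for $\mathsf{d}\le\mathsf{2}$, and it is automatic for $\mathsf{d}>\mathsf{2}$, where $r_0=\mathrm{id}$), the legal patch $\mathcal W_\varepsilon:=\sigma^{K_0}(\mathcal S_\varepsilon)$ contains $K_0^\text{th}$-order supertiles of type $j_0$ whose orientations still form $D$. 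If a given legal tile corona $\tau$ occurs inside the reference supertile $\sigma^{K_0}(\{S_{j_0}\})$ with orientation $\rho_\tau$, then inside $\mathcal W_\varepsilon$ it occurs with orientations forming $D\rho_\tau$, again $\varepsilon$-dense in $H$ by right-invariance of the metric; hence $\mathcal W_\varepsilon$ contains a copy of $\tau$ with rotation within $\varepsilon$ of the identity, and so does any $G$-rotate $h_0\mathcal W_\varepsilon$ (replacing $D\rho_\tau$ by the still $\varepsilon$-dense set $h_0D\rho_\tau$). As $\mathcal W_\varepsilon$ is a legal patch of diameter bounded in terms of $\varepsilon$, Proposition~\ref{thm:psubs-rep}~ii) (recall $\sigma$ has FLC) together with Lemma~\ref{lem:suptile2}~iii) then furnishes $R_0(\varepsilon)$ such that every legal $R_0(\varepsilon)$-patch contains a $G$-copy of $\mathcal W_\varepsilon$, proving the oriented base case.

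From here the bootstrapping step of Proposition~\ref{thm:psubs-linrep} should apply verbatim with ``copy'' replaced throughout by ``copy with rotation within $\varepsilon$ of the identity'', since $\sigma^k$ preserves the rotation part of a $G$-copy by the same computation as above and carries legal tile coronae onto legal $k^\text{th}$-order supertile coronae; thus every $\lambda^k R_0(\varepsilon)$-patch of $\mathcal T$ will contain, for each $k^\text{th}$-order supertile corona of $\mathcal T$, such a copy. Combining this with the fact established in the remark before Proposition~\ref{thm:psubs-linrep} that every $\varrho_k$-patch of $\mathcal T$ lies inside some $k^\text{th}$-order supertile corona of $\mathcal T$, and taking for given $r>0$ the least $k$ with $\varrho_k=\lambda^k\varrho_0>r$, one concludes that every $\lambda^k R_0(\varepsilon)$-patch of $\mathcal T$ contains, for every $r$-patch $\mathcal P$ of $\mathcal T$, an $E(\mathsf{d})$-equivalent copy of $\mathcal P$ with rotation within $\varepsilon$ of the identity. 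As $\lambda^k R_0(\varepsilon)\le(\lambda R_0(\varepsilon)/\varrho_0)\,r$, one may take $R(r,\varepsilon)=(\lambda R_0(\varepsilon)/\varrho_0)\,r=\mathcal O(r)$ with $\mathcal O$-constant depending on $\varepsilon$, which is linear wiggle-repetitivity.

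The main obstacle is the oriented base case: DTO supplies only individual tiles of one fixed type in dense orientations, whereas the inductive scheme needs dense orientations of entire coronae of all types. Substituting the DTO-supertile $K_0$ further times, so that each of its now orientation-dense high-order supertiles of type $j_0$ internally realises every legal tile corona, is the device that bridges this gap; the remaining points — that $\sigma$ preserves rotation parts of $G$-copies, and that $G$-translates of $\varepsilon$-dense subsets of $O(\mathsf{d})$ stay $\varepsilon$-dense — are routine given the bi-invariant metric and the conventions of Section~\ref{chap:subs}.
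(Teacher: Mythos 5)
Your argument is correct, and it rests on the same two observations that drive the paper's proof: the substitution preserves the rotation part of a $G$-copy (so DTO propagates from individual tiles to the supertiles, and hence to the patches those supertiles contain after further substitution), and one-sided translates of an $\varepsilon$-dense subset of $H$ stay $\varepsilon$-dense for an invariant metric. The packaging is genuinely different, however. The paper first proves plain wiggle-repetitivity: by DTO and primitivity every $\ell(\varepsilon)^{\text{th}}$-order supertile carries type-$1$ tiles in $\varepsilon$-dense orientations, by repetitivity every $N(r)^{\text{th}}$-order supertile contains a $G$-copy of every legal $r$-patch, so every $(N+\ell)^{\text{th}}$-order supertile contains every legal $r$-patch in $\varepsilon$-dense orientations; Lemma~\ref{lem:suptile}~ii) then yields $R(r,\varepsilon)=2r_{N(r)+\ell(\varepsilon)}$, and linearity is obtained only afterwards by explicating constants, invoking Proposition~\ref{thm:psubs-linrep} as a black box to take $R'(r)=Lr$. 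You instead re-run the supertile-corona bootstrap from the proof of Proposition~\ref{thm:psubs-linrep} with orientations tracked throughout: an oriented base case at the tile-corona level (your $\mathcal W_\varepsilon=\sigma^{K_0}(\mathcal S_\varepsilon)$), then inflation by $\sigma^k$, so the $r$-dependence sits entirely in the inflation exponent while $\varepsilon$ enters only through $R_0(\varepsilon)$. Both routes are sound and give linear constants of the same flavour. Two points of polish: $\varepsilon$-density of a subset of $O(\mathsf{d})$ is not literally preserved under change to an equivalent metric (only the universally quantified statements DTO and wiggle-repetitivity are, via uniform equivalence on the compact group), and your appeal to Lemma~\ref{lem:suptile2}~i) for ``every $K_0^{\text{th}}$-order supertile contains every legal tile corona'' should be accompanied by the remark that FLC guarantees only finitely many legal tile coronae up to $G$-equivalence, each contained in a supertile of bounded order; both repairs are routine.
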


\begin{proof}
Take arbitrary $\mathcal{T}\in X_\sigma$. First we show that $\mathcal T$ is wiggle--repetitive. 

Fix arbitrary $\varepsilon>0$. Since $\sigma$ has dense tile orientations in $H$, there is $k=k(\varepsilon)$
such that some $k^\text{th}$-order supertile has type 1 tiles in $\varepsilon$-dense orientations in $H$. Define $\ell(\varepsilon):=k+n$, with $n$ such that $(M_\sigma)^n$ is positive. Then every $\ell^\text{th}$-order supertile has type 1 tiles in $\varepsilon$-dense orientations in $H$.

Now fix arbitrary $r>0$.
Since $\mathcal T$ is repetitive w.r.t.~$G$ by Proposition~\ref{thm:psubs-rep} ii), we can 
choose $R'=R'(r)$ such that every $R'$-patch of $\mathcal{T}$ contains a $G$-equivalent copy 
of any $r$-patch of $\mathcal{T}$. Hence, by Lemma~\ref{lem:suptile2} iii), every legal $R'$-patch contains 
a $G$-equivalent copy of any legal $r$-patch. Choose $N=N(R')$ such that every 
$N^\text{th}$-order supertile contains some $R'$-patch.
Then every $N^\text{th}$-order supertile contains a $G$-equivalent copy of any legal $r$-patch.

Combining the above two arguments, we conclude that every $(N+\ell)^\text{th}$-order supertile of contains equivalent copies in $\varepsilon$-dense orientations in $H$ of any legal $r$-patch. (Here the orientation of a patch is defined as the orientation of a reference tile in the patch.)  By Lemma \ref{lem:suptile} ii), every $2r_{N+\ell}$-patch of $\mathcal T$ contains some $(N+\ell)^\text{th}$-order supertile. It follows that $\mathcal T$ is wiggle--repetitive, with $R=R(r,\varepsilon)=2r_{N+\ell}$.

To show that $\mathcal T$ is also linearly wiggle--repetitive, we 
explicate the constants. Fix arbitrary $\varepsilon>0$ and, without loss of generality, arbitrary $r>1$.
By linear repetitivity w.r.t.~$G$, see Proposition~\ref{thm:psubs-linrep}, we can choose $R'(r) = L r$ for some $L>0$. 
Let $s_k=\lambda^ks_0>0$ be such that every $k^\text{th}$-order supertile support contains some ball of radius $s_k$.
We can choose $N=N(R')$ as the smallest integer such that $s_N > R'$.  Since we may choose $r_k=\lambda^k r_0>0$, we have
\begin{displaymath}
2r_{N+\ell}=2\lambda^{N+\ell}r_0\le 2\lambda^{\ell+1}\frac{r_0}{s_0}R'=2\lambda^{\ell+1}\frac{r_0}{s_0}L\cdot r.
\end{displaymath}
Hence $R=c(\varepsilon)\cdot r$ with $c(\varepsilon)=2\lambda^{\ell+1}\frac{r_0}{s_0}L$, and $\ell=\ell(\varepsilon)$ is independent of $r$. This shows linear wiggle--repetitivity.
\end{proof}

\section{Almost Repetitivity and Minimality}  \label{sec:alm-rep}

In this section $M$ is a non-empty, locally compact and second-countable topological space. We stick to the convention that every locally compact space enjoys the Hausdorff property. We consider a metrisable topological group $T$ acting on $M$ from the left. We assume that the action $(x,m)\mapsto xm$ from $T\times M$ to $M$  is continuous and proper. We fix a $T$-invariant proper metric $d$ on $M$ that generates the topology on $M$. 

\begin{remark}
An action is proper if the map $(x,m)\mapsto (xm,m)$ from $T\times M$ to $M\times M$ is proper, i.e., if pre--images of compact sets in $M\times M$ are compact in $T\times M$, where we use the product topology on $T\times M$ and on $M\times M$.
A metric $d$ on $M$ is proper if all closed balls in $M$ of finite radius are compact.  A $T$-invariant proper metric generating the topology on $M$ indeed exists under our assumptions on the group action, see \cite[Thm~4.2]{amn} and, for a detailed discussion in our context, \cite[Section~2.1]{MR}.
\end{remark}

\begin{example}
Our prime example is $M=\mathbb R^\mathsf{d}$ with the Euclidean metric $d$, together with the canonical left action on $M$ of $T=\mathbb R^\mathsf{d}$ or $T=E(\mathsf{d})$, the Euclidean group $E(\mathsf{d})=\mathbb R^\mathsf{d}\rtimes O(\mathsf{d})$ with the standard topology. For $(x,r)\in E(\mathsf{d})$ and $m\in \mathbb R^\mathsf{d}$, with  $x$ a translation and $r$ a rotation or rotation--reflection about the origin, we write $(x,r)m=x+r\cdot m$.
\end{example}

Let $B_s(m)$ denote the open ball in $M$ of radius $s>0$ centered in $m$.
A subset $P$ of $M$ is called \emph{uniformly discrete of radius $r>0$}, if every open ball in $M$ of radius $r$ contains at most one point of $P$. It is called \emph{relatively dense of radius $R>0$}, if every closed ball in $M$ of radius $R$ contains at least one point of $P$. A \emph{Delone set} is a subset of $M$ which is uniformly discrete and relatively dense. Let the space $\mathcal{P}_r$ of uniformly discrete sets in $(M,d)$ of radius $r>0$ be equipped with the local rubber metric $d_{LR}$. Then $\mathcal{P}_r$ is compact by standard reasoning, compare \cite[Remarks~2.10]{MR}. We also call the elements of $\mathcal{P}_r$ \emph{point sets}. 

For a uniformly discrete set $P\subseteq M$ and $V\subseteq M$ bounded the product set $P\sqcap V:=(P\cap V)\times V$ is called a {\em pattern (of $P$)}, $V$ is called the \emph{support} of the pattern, and the finite set $P\cap V$ is called the \emph{content} of the pattern. An \emph{$s$-pattern} 
is a pattern $P\sqcap V$, where $V$ is a closed ball of finite radius $s>0$.  Every $s$-pattern is also called a \emph{ball pattern}.

Repetitivity describes how equivalent patterns repeat in a point set, where equivalence is understood with respect to the group $T$.
We call two subsets $N,N'$ of $M$ \emph{equivalent (w.r.t.~$T$)},  if there exists $t\in T$ such that $tN=N'$, and we call $N'$ a \emph{shift} of $N$. 
We call two patterns $P\sqcap V$ and $P'\sqcap V'$ \emph{equivalent (w.r.t.~$T$)}, if there exists $t\in T$ such that $tP\sqcap tV=P'\sqcap V'$.

In order to measure the deviation of two point sets $P,P'\in \mathcal{P}_r$ within bounded $V\subseteq M$, we use the distance
\begin{displaymath}
d_V(P,P'):=\inf\{\varepsilon>0\,|\,P\cap V\subseteq(P')_\varepsilon\text{~and~} P'\cap V\subseteq(P)_\varepsilon \},
\end{displaymath}
where $(P)_\varepsilon=\cup_{p\in P}B_\varepsilon(p)$.
If $P,P'$ have no points outside $V$, then $d_V(\cdot,\cdot)$ is the Hausdorff distance between $P$ and $P'$. In general, a small distance $d_V(P,P')$ is compatible with large deviations between $P$ and $P'$ near the boundary of $V$, in contrast to the Hausdorff distance between $P\cap V$ and $P'\cap V$. 

Given $\varepsilon\ge0$, we say that two patterns $P\sqcap V$ and $P'\sqcap V'$ are \emph{$\varepsilon$--similar}, $(P\sqcap V)\sim_\varepsilon(P'\sqcap V')$, if there exists $t\in T$ such that $V'=tV$ and $d_{tV}(tP,P')\le\varepsilon$.
This relation is reflexive. It is also symmetric, due to $T$--invariance of the metric on $M$. It is not transitive in general. If $(P\sqcap V)\sim_{\delta}(P'\sqcap V')$ and $(P'\sqcap V')\sim_{\varepsilon}(P''\sqcap V'')$, then $(P\sqcap V)\sim_{\delta+\varepsilon}(P''\sqcap V'')$, due to the triangle inequality.

\begin{lemma}\label{epsFLC}
Let $P$ be a uniformly discrete set and let $V\subseteq M$ be compact. Take arbitrary $\varepsilon>0$. Then the collection of patterns of $P$ supported on shifts of $V$ can be subdivided into finitely many classes of $\varepsilon$--similar patterns. 
\end{lemma}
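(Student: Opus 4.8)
The plan is to reduce the statement to a compactness argument in a suitable metric space. First I would fix a $T$-invariant proper metric $d$ on $M$ as in the standing assumptions, so that the closed ball $\overline{B}_s(m)$ is compact whenever $V$ is contained in such a ball. Since $V$ is compact, pick $s>0$ and $m_0\in M$ with $V\subseteq \overline{B}_s(m_0)$; then for every shift $tV$ of $V$, the content $tP\cap tV$ of the pattern $tP\sqcap tV$ is a uniformly discrete set (of the same radius $r$ as $P$) contained in a compact set $T$-equivalent to $\overline{B}_s(m_0)$. The key point is that, after applying a suitable $t$, we may assume all patterns under consideration are supported on the fixed set $V$ itself: if $P\sqcap V$ and $P'\sqcap V'$ are two patterns of $P$ with $V'=t'V$ a shift of $V$, then $(P\sqcap V)\sim_\varepsilon(P'\sqcap V')$ holds precisely when the translated-back patterns are $\varepsilon$-similar with $t=\mathrm{id}$, i.e.\ when $d_V(P, (t')^{-1}P')\le\varepsilon$. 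So it suffices to cover, up to $\varepsilon/2$ in the pseudmetric $d_V$, the set of all finite point configurations arising as contents $P\cap V$ of patterns of $P$ supported on shifts of $V$, where now I regard each such content as a point of the space of uniformly discrete subsets of $V$.

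Next I would invoke compactness: the space $\mathcal{P}_r$ of uniformly discrete sets of radius $r$, equipped with $d_{LR}$, is compact (as stated after the definition of Delone set, citing \cite[Remarks~2.10]{MR}). The map sending a point set to its restriction to $V$, or equivalently the pseudometric $d_V$ restricted to the relevant family, is controlled by $d_{LR}$ for $V$ inside a fixed ball: more precisely, $d_V(Q,Q')$ is small whenever $d_{LR}(Q,Q')$ is small and $V$ is contained in the relevant ball of $d_{LR}$, since $d_{LR}$ already compares point sets up to $\varepsilon$-rubber inside $B_{1/\varepsilon}$. Thus the family of contents, viewed inside the compact space $\mathcal{P}_r$, is totally bounded for the (pseudo)metric $d_V$. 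Hence for the given $\varepsilon>0$ there exist finitely many point sets $Q_1,\ldots,Q_N\in\mathcal{P}_r$ such that every content $P\cap V$ (of a pattern of $P$ on a shift of $V$, translated back to $V$) satisfies $d_V(P\cap V,\,Q_i)\le\varepsilon/2$ for some $i$. Declaring two patterns to lie in the same class if their back-translated contents are both within $\varepsilon/2$ of a common $Q_i$ then yields finitely many classes, and any two patterns in the same class are $\varepsilon$-similar by the triangle inequality for $\sim_\bullet$ recorded just before the lemma.

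The main obstacle I anticipate is bookkeeping the interplay between the group action and the basepoint of the balls defining $d_{LR}$: the metric $d_{LR}$ uses balls $B_{1/\varepsilon}$ about a \emph{fixed} reference point, whereas the shifts $tV$ of $V$ roam over $M$, so one must be careful that after translating a pattern back by $t^{-1}$ its content genuinely lands in a fixed compact region and that the estimates on $d_V$ are uniform in $t$. This is exactly where $T$-invariance of $d$ (hence of $d_V$ under the simultaneous shift of $P$ and $V$) does the work, and where properness of $d$ guarantees the relevant balls are compact; I would spell out that $d_{tV}(tP,P') = d_V(P,t^{-1}P')$ and that total boundedness of $\mathcal{P}_r$ in $d_{LR}$ forces total boundedness of the content family in $d_V$ for $V$ fixed. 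Everything else — extracting a finite $\varepsilon/2$-net, forming the classes, applying the triangle inequality — is routine.
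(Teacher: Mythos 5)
Your overall architecture is sound and genuinely different from the paper's. The paper argues directly and more elementarily: it fixes a finite cover of $V$ by balls $B_{\varepsilon/2}(m_i)$, $i\in I$, assigns to each pattern $P\sqcap tV$ the \emph{type} $J\subseteq I$ of indices $j$ for which the back-translated content $t^{-1}(P\cap tV)$ meets $B_{\varepsilon/2}(m_j)$, and checks that two patterns of the same type are $\varepsilon$-similar, because every point of one content lies in some $B_{\varepsilon/2}(m_j)$ with $j\in J$ and that ball also contains a point of the other content. There are at most $2^{|I|}$ types, so finiteness is immediate and compactness of $(\mathcal{P}_r,d_{LR})$ is never invoked. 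In your version, the reduction $d_{tV}(tP,P')=d_V(P,t^{-1}P')$ and the passage from a finite $d_{LR}$-net to a finite $d_V$-net of the content family (valid once $V$ sits inside a fixed ball about the reference point) are both correct; the compactness detour just imports a heavier tool to obtain the same finiteness.

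The step you dismiss as routine is the one that actually needs an argument: $d_V$ does \emph{not} satisfy the triangle inequality, so ``both contents within $\varepsilon/2$ of a common $Q_i$'' does not by itself yield $\varepsilon$-similarity when the $Q_i$ are arbitrary elements of a $d_{LR}$-net of $\mathcal{P}_r$. The inclusion $P\cap V\subseteq (Q_i)_{\varepsilon/2}$ may match a point of $P\cap V$ to a point of $Q_i$ lying outside $V$, about which $d_V(Q_i,\cdot)$ carries no information. Concretely, for $M=\mathbb R$, $V=[0,1]$, $C=\{1\}$, $Q=\{1.05\}$, $C'=\{100\}$ one has $d_V(C,Q)=0.05$ and $d_V(Q,C')=0$, yet $d_V(C,C')=99$. (The transitivity statement recorded in the paper just before the lemma carries the same caveat; note that the paper's own proof of the lemma does not use it.) The repair stays inside your framework: choose the net scale $\delta\le\varepsilon/2$ so small that the thickened set $(V)_{\varepsilon/2}$, and not merely $V$, is contained in $B_{1/\delta}$. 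Then for a content $C\subseteq V$ with $d_{LR}(C,Q_i)<\delta$, every point of $Q_i$ that is $\varepsilon/2$-close to a point of $C$ lies in $(V)_{\varepsilon/2}\subseteq B_{1/\delta}$, and the $d_{LR}$-estimate for any other content $C'$ assigned to the same $Q_i$ forces that point to be $\delta$-close to $C'$; chaining the two estimates through such points of $Q_i$ gives $d_V(C,C')\le\varepsilon$ and hence $\varepsilon$-similarity of the corresponding patterns. With that adjustment your proof is complete.
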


\begin{remark}
Due to the above lemma, there is no need for an ``almost version'' of finite local complexity, see also Lemma~\ref{lem:ar}.
\end{remark}

\begin{proof}
Take arbitrary $\varepsilon>0$ and fix some finite cover $(B_{\varepsilon/2}(m_i))_{i \in I}$ of $V$. For a pattern $P\sqcap V'$ such that $V'=tV$ for some $t\in T$, define $J=J(V')\subseteq I$ by
\begin{displaymath}
J:=\{j\in I\,|\,t^{-1}(P\cap V')\cap B_{\varepsilon/2}(m_j)\ne\varnothing\}.
\end{displaymath}
We say that the pattern $P\sqcap V'$ is \emph{of type $J$}. It is easy to see  that patterns of $P$, which are supported on shifts of $V$ and are of the same type, are in fact $\varepsilon$--similar. But the number of different types $J\subseteq I$ is finite, since $I$ is finite. Hence the subdivision into types leads to classes of $\varepsilon$--similar patterns.
\end{proof}

For the following definition we call $L\subseteq T$ \textit{relatively dense} if there exists compact $K=K(L)\subseteq T$ such that $KL=T$. 

\begin{definition}\label{defawr}
$P\in\mathcal{P}_r$ is called \emph{almost repetitive}, if for every $\varepsilon>0$ and for every compact $V\subseteq M$ the set
\begin{displaymath}
T_{V,\varepsilon}(P):=\{x\in T\,|\,  d_V(xP,P)<\varepsilon\}
\end{displaymath}
is relatively dense in $T$.
\end{definition}

Almost repetitivity generalises (weak) repetitivity of the previous section, as is seen from the following lemma.

\begin{lemma}\label{lem:ar}
Assume that the group action on $M$ is also transitive. Then for $P\in\mathcal{P}_r$ the following statements are equivalent.
\begin{itemize}
\item[i)] $P$ is almost repetitive.
\item[ii)] For every ball pattern $P\sqcap V$ and for every $\varepsilon>0$ there exists $R=R(V,\varepsilon)>0$, such that every $R$--pattern in $P$ contains a pattern $\varepsilon$--similar to $P\sqcap V$.
\item[iii)] For every $r>0$ and for every $\varepsilon>0$ there exists $R=R(r,\varepsilon)>0$, such that every $R$--pattern in $P$ contains an $\varepsilon$--similar copy of every $r$--pattern in $P$.
\end{itemize}
\end{lemma}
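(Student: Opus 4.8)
The plan is to prove the cycle of implications i)$\Rightarrow$iii)$\Rightarrow$ii)$\Rightarrow$i), using the transitivity assumption to translate between statements about relative density in the group $T$ and statements about patterns occurring within balls in $M$. Fix once and for all a reference point $m_0\in M$; by transitivity every ball in $M$ is a shift of a ball centered at $m_0$, and every point of $M$ is of the form $tm_0$ for some $t\in T$. This lets us identify balls $\overline{B}_s(m)$ with their "addresses" $t\in T$ via $m=tm_0$, and it lets us turn the relative density of $T_{V,\varepsilon}(P)$ (a compact $K$ with $K\cdot T_{V,\varepsilon}(P)=T$) into a geometric statement: shifts of $P$ that $\varepsilon$-agree with $P$ on $V$ occur with bounded gaps throughout $M$.

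For iii)$\Rightarrow$ii), given a ball pattern $P\sqcap V$ with $V=\overline{B}_s(m)$, one simply takes $r=s$ (or $r$ the radius of a ball containing $V$); then an $R(r,\varepsilon)$-pattern as in iii) contains an $\varepsilon$-similar copy of the particular $r$-pattern $P\sqcap V$, which is exactly ii) with $R(V,\varepsilon)=R(r,\varepsilon)$. For ii)$\Rightarrow$i), fix compact $V$ and $\varepsilon>0$; enlarging $V$ we may assume $V=\overline{B}_s(m_0)$ is a ball. Apply ii) to the ball pattern $P\sqcap V$ with parameter $\varepsilon/2$, obtaining $R=R(V,\varepsilon/2)$ so that every $R$-pattern in $P$ contains a pattern $(\varepsilon/2)$-similar to $P\sqcap V$. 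Now take any $t\in T$: the $R$-pattern of $P$ around $tm_0$ contains an $(\varepsilon/2)$-similar copy of $P\sqcap V$, i.e. there is $x\in T$ with $xV\subseteq \overline{B}_R(tm_0)$ and $d_{xV}(xP,P)\le\varepsilon/2$; since $V$ is $T$-invariant up to shift, $x$ differs from $t$ by a group element lying in a fixed compact set $K'$ (those moving $V$ into $\overline{B}_R(m_0)$ after translating the center to $m_0$ — here properness of the action guarantees $K'$ is compact), and $d_{xV}(xP,P)\le \varepsilon/2$ means $d_V(x'P,x'P)$-type estimate that, after a further shift by an element of a compact set, yields $d_V(yP,P)<\varepsilon$ for some $y\in tK''$. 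Hence $T_{V,\varepsilon}(P)$ meets $tK''$ for every $t$, so it is relatively dense.

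For i)$\Rightarrow$iii), this is the substantive direction. Fix $r>0$ and $\varepsilon>0$. By Lemma~\ref{epsFLC} applied to $V=\overline{B}_r(m_0)$ and tolerance $\varepsilon/2$, the $r$-patterns of $P$ fall into finitely many classes $C_1,\dots,C_N$ of $(\varepsilon/2)$-similar patterns; pick a representative $P\sqcap V_i$ in each class. For each $i$, almost repetitivity applied to a suitable compact set $V_i'\supseteq V_i$ (the relevant compact set should absorb $V_i$ together with the compact "wiggle room" needed so that an $(\varepsilon/2)$-agreement of shifts of $P$ on $V_i'$ forces the pattern $P\sqcap V_i$ to reappear $\varepsilon/2$-similarly nearby) gives a compact $K_i$ with $K_i\cdot T_{V_i',\varepsilon/2}(P)=T$. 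Set $R$ large enough that $\overline{B}_R(m_0)$ contains $k\,V_i'$ for every $i$ and every $k$ in the fixed compact set $K_i$ — finiteness of $N$ and compactness of the $K_i$ make this possible. Then any $R$-pattern in $P$, say around $tm_0$, contains: for each $i$, a translate of $V_i'$ on which $xP$ $(\varepsilon/2)$-agrees with $P$ for appropriate $x$, hence an $(\varepsilon/2)$-similar copy of $P\sqcap V_i$, hence (by transitivity of $\varepsilon$-similarity up to adding tolerances, using $\sim_{\varepsilon/2}\circ\sim_{\varepsilon/2}\subseteq\sim_\varepsilon$) an $\varepsilon$-similar copy of every $r$-pattern of $P$ lying in class $C_i$. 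Running over $i=1,\dots,N$ covers all $r$-patterns, giving iii).

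The main obstacle I anticipate is the bookkeeping in i)$\Rightarrow$iii): one must choose the enlarged compact sets $V_i'$ and the radius $R$ so that an $(\varepsilon/2)$-proximity of two shifts of $P$ on $V_i'$ genuinely produces, somewhere inside the $R$-ball, a pattern that is $(\varepsilon/2)$-similar to $P\sqcap V_i$ and not merely "close in $d_{V_i'}$" — the asymmetry of $d_V$ near the boundary of $V$ (noted in the paper just before Lemma~\ref{epsFLC}) means one needs the witnessing shift's image of $V_i$ to sit well inside $V_i'$, which is exactly what forces $V_i'$ to be strictly larger than $V_i$ and what makes the compactness/properness of the action (used to control where the relevant group elements live) indispensable. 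Getting the constants to depend only on $r$ and $\varepsilon$ and not on the position $t$ is where transitivity and $T$-invariance of $d$ do the real work.
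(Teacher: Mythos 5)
Your proof is correct and rests on essentially the same ingredients as the paper's -- Lemma~\ref{epsFLC} for the finite subdivision into $\varepsilon$--similarity classes, transitivity together with $T$--invariance of the metric to convert relative density of $T_{V,\varepsilon}(P)$ into occurrence inside every $R$--ball, and properness of the action for the compactness of the relevant sets -- only with the cycle run in the opposite direction (the paper proves i)$\Rightarrow$ii)$\Rightarrow$iii)$\Rightarrow$i), and your i)$\Rightarrow$iii) is its first two steps combined, while your ii)$\Rightarrow$i) is a one--ball version of its iii)$\Rightarrow$i)). One remark: the boundary asymmetry of $d_V$ you worry about is harmless here, since $x\in T_{V_i,\varepsilon/2}(P)$ gives $d_{x^{-1}V_i}(x^{-1}P,P)=d_{V_i}(xP,P)<\varepsilon/2$ exactly, so the pattern $P\sqcap x^{-1}V_i$ is already $(\varepsilon/2)$--similar to $P\sqcap V_i$ by definition, and both the enlargement to $V_i'$ and the ``further shift'' in your ii)$\Rightarrow$i) are unnecessary.
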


\begin{remark}
(i) Due to ii), an almost repetitive point set is a Delone set, if the group action is also transitive. 

(ii) Examples of almost repetitive point sets arise from weakly repetitive tilings via structure--preserving prototile decorations, compare Section~\ref{sec:dpst}. For further examples, see below.

(iii) A property similar to ii) above is called repetitive in \cite[Def~2.1.6.]{BBG}.
\end{remark}

\begin{example}\label{ex:arep}
For $k\in\mathbb Z\setminus\{0\}$, let $t(k)\ge0$ denote the largest integer such that $2^{t(k)}$ divides $k$, and set $t(0):=\infty$. 
Define $p_k:=k+2^{-(t(k)+1)}$ and note that $|p_{k+1}-p_{k}|\ge 1/2$ for all $k\in\mathbb Z$. We study the uniformly discrete set
\begin{displaymath}
P:=\{p_k\,|\, k\in\mathbb Z\}\subset\mathbb R.
\end{displaymath}
Since $2|p_{2k-1}-p_{2k}|=1+2^{-t(2k)}$ for all $k\in\mathbb Z$, there are infinitely many different distances between consecutive points. Hence $P$ has not FLC and cannot be repetitive, compare Remark~\ref{rem:rep}. In fact $P$ is not weakly repetitive, since a distance of $1/2$ between neighbouring points occurs for $p_{-1}$ and $p_{0}$ only.
To see that $P$ is almost repetitive, we use the estimate
\begin{displaymath}
|p_k+m2^n-p_{k+m2^n}|\le 2^{-(n+1)} \qquad (k\in\mathbb Z, m\in 2\mathbb Z+1, n\in\mathbb N),
\end{displaymath}
which is obtained from estimates of $t(k+m2^n)$ for $n>t(k)$, $n=t(k)$ and $n<t(k)$. We infer
\begin{displaymath}
P+m2^n\subseteq (P)_{2^{-(n+1)}},\qquad P\subseteq (P+m2^n)_{2^{-(n+1)}} \qquad (m\in 2\mathbb Z+1, n\in\mathbb N),
\end{displaymath}
from which almost repetitivity can be read off.  The above inclusions also show that $R(r,\varepsilon)$ in Lemma~\ref{lem:ar} iii) can be chosen linearly in $r$.

These arguments also apply to the uniformly discrete set $P_0:=\{p_k\,|\, k\in\mathbb Z\setminus\{0\}\}\subset\mathbb R$, which has not FLC, but is almost linearly repetitive. In fact $P_0$ is weakly repetitive, which follows from
$p_{k}+m2^n=p_{k+m2^n}$ for $k\in\mathbb Z\setminus\{0\}$, $m\in\mathbb Z$ and $n>t(k)$. This also shows that $P_0$ is  linearly weakly repetitive.
\end{example}

\begin{proof}[Proof of Lemma~\ref{lem:ar}]

i) $\Rightarrow$ ii). 
Take a ball pattern $P\sqcap V$ and $\varepsilon>0$. Then $T_{V,\varepsilon}(P)$ as in Definition \ref{defawr} is relatively dense in $T$. Hence there exists compact $K\subseteq T$ such that $K T_{V,\varepsilon}(P)=T$. Fix $m_o\in M$ and choose $R=R(V,\varepsilon)$ sufficiently large such that $KV\subseteq B_R(m_o)$. Take arbitrary $m\in M$. Then $m_o=kxm$ for some $k\in K$ and some $x\in T_{V,\varepsilon}(P)$ due to transitivity. But then we have $x^{-1}V\subseteq B_R(m)$, since $kV\subseteq B_R(m_o)$ implies that $V\subseteq k^{-1}B_R(m_o)=B_R(xx^{-1}k^{-1}m_o)=B_R(xm)=xB_R(m)$, due to $T$--invariance of the metric. We also have $d_{x^{-1}V}(x^{-1}P,P)=d_V(P,xP)<\varepsilon$, since $x\in T_{V,\varepsilon}(P)$. Hence the pattern $P\sqcap x^{-1}V$ has the property claimed in ii).

ii) $\Rightarrow$ iii).
Fix $r>0$ and $\varepsilon>0$. Take a finite collection $\{P\sqcap V_1,\ldots, P\sqcap V_k\}$ of $r$--patterns, such that every $r$--pattern in $P$ is $(\varepsilon/2)$--similar to some pattern in the collection. Such a collection exists by Lemma \ref{epsFLC}. Define $R=R(r,\varepsilon):=\max\{R(V_1,\varepsilon/2),\ldots,R(V_k,\varepsilon/2)\}$. Now take an arbitrary $R$-pattern and an arbitrary $r$--pattern $P\sqcap V$. We have $(P\sqcap V)\sim_{\varepsilon/2}(P\sqcap V_i)$ for some $i\in\{1,\ldots,k\}$. By assumption, there is some pattern $(P\sqcap V')\sim_{\varepsilon/2}(P\sqcap V_i)$ contained in the $R$-pattern. But this means that $(P\sqcap V')\sim_\varepsilon(P\sqcap V)$, which proves the implication.

iii) $\Rightarrow$ i). Take $\varepsilon>0$ and compact $V\subseteq M$. Assume w.l.o.g.~that $V\ne\varnothing$. Take $r>0$ such that $V$ is contained in some $r$-ball $B_r(m_o)$ and set $R=R(r,\varepsilon)$ as in iii). Let $\bigcup_{i\in\mathbb N}B_R(m_i)$ be a countable cover of $M$ by $R$-balls. For every $i\in\mathbb N$ take $x_i\in T$ such that
\begin{displaymath}
x_i^{-1}B_r(m_o)\subseteq B_R(m_i), \qquad d_{x_i^{-1}B_r(m_o)}(x_i^{-1}P,P)<\varepsilon.
\end{displaymath}
The second of the above properties results for every $x_i\in T$ in the estimate
\begin{displaymath}
d_{V}(P,x_iP)=d_{x_i^{-1}V}(x_i^{-1}P,P)<\varepsilon.
\end{displaymath}
We now define $T_{V,\varepsilon}(P):=\{x_i\in T\,|\,i\in\mathbb N\}$. For $m\in V$ fixed, we have $x_i^{-1}m\in B_R(m_i)$ for every $x_i\in T_{V,\varepsilon}(P)$. Hence we have $M=\bigcup_{i\in\mathbb N}B_{2R}(x_i^{-1}m)=\bigcup_{i\in\mathbb N}x_i^{-1}B_{2R}(m)$, where we used $T$--invariance of the metric. Now the set
\begin{displaymath}
K:=\{x\in T \,|\, x^{-1}m\in B_{2R}(m)\}
\end{displaymath}
is compact, due to properness of the group action. In order to show $KT_{V,\varepsilon}(P)=T$, take arbitrary $x\in T$. Then $x^{-1}m\in x_i^{-1}B_{2R}(m)$ for some $i\in\mathbb N$, and we can conclude
\begin{displaymath}
\begin{split}
x&\in\{y\in T\,|\,y^{-1}m\in x_i^{-1}B_{2R}(m)\}
=\{y\in T\,|\,x_iy^{-1}m\in B_{2R}(m)\}\\
&=\{z\in T\,|\,z^{-1}m\in B_{2R}(m)\}x_i \subseteq K T_{V,\varepsilon}.
\end{split}
\end{displaymath}
As the reverse inclusion is trivial, we have shown that $P$ is almost repetitive.
\end{proof}

Let us define two related notions.

\begin{definition}
\begin{itemize}
\item[i)] $P\in \mathcal{P}_r$ is called \textit{almost periodic}, if for every $\varepsilon>0$ the collection of 
\emph{$\varepsilon$--periods} of $P$, i.e., the set
\begin{displaymath}
T_\varepsilon(P):=\{x\in T\,|\,  d_{LR}(xP,P)<\varepsilon\},
\end{displaymath}
is relatively dense in $T$.
\item[ii)] $P,P'\in \mathcal{P}_r$ are called \textit{almost locally indistinguishible}, if
for every $\varepsilon>0$ and for every compact $V\subseteq M$, there exists $x'\in T$ such that $d_V(P, x'P')<\varepsilon$, and there exists $x\in T$ such that $d_V(xP,P')<\varepsilon$.
\end{itemize}

\end{definition}

\begin{example}\label{ex:lap}
Let $f$ be a \emph{Bohr--almost periodic function}, i.e., a real-valued continuous function that can be uniformly approximated by trigonometric polynomials. Then the sequence $(x_n)_{n\in\mathbb Z}$, where $x_n=f(n)$, is \emph{Bohr--almost periodic}, i.e., for every $\varepsilon>0$ there exists $K=K(\varepsilon)$ such that every sequence of $K$ consecutive integers contains $k$ such that $|x_{n+k}-x_n|<\varepsilon$ for all $n\in\mathbb Z$. The value $k$ is called a \emph{Bohr--$\varepsilon$-period} of $x_n$. Note that $(2^{-(t(n)+1)})_{n\in\mathbb Z}$ in Example~\ref{ex:arep} is a Bohr--almost periodic sequence. Every Bohr--almost periodic sequence derives from a Bohr--almost periodic function, see e.g.~\cite[Thm~1.27]{C}.

For a Bohr--almost periodic function $f$ of norm $||f||_\infty=1/3$, define $P:=\{n+f(n)\,|\,n\in\mathbb Z\}$. Then $P$ is uniformly discrete of radius $r=1/3$, and $P$ is almost periodic w.r.t.~$T=\mathbb R$. To see the latter, fix $\varepsilon\in]0,1/\sqrt{2}[$ and a Bohr--$\varepsilon$--period $k$ as above. Noting that $k^{-1}P=\{n-k+f(n)\,|\,n\in\mathbb Z\}=\{n+f(n+k)\,|\,n\in\mathbb Z\}$, we infer that 
$k^{-1}P\subseteq (P)_\varepsilon$ and $P\subseteq (k^{-1}P)_\varepsilon$. But this means that $d_{LR}(k^{-1}P,P)<\varepsilon$, hence $k^{-1}\in T_\varepsilon(P)$, and relative denseness of $T_\varepsilon(P)$ follows from relative denseness of the Bohr--$\varepsilon$-periods of $(f(n))_{n\in\mathbb N}$. Moreover, for every compact $V$ we have $d_V(k^{-1}P,P)<\varepsilon$, hence $k^{-1}\in T_{V,\varepsilon}(P)$. Hence $P$ is almost repetitive. The argument also shows that we can choose $R(r,\varepsilon)$ in Lemma~\ref{lem:ar} iii) linearly in $r$. 

This  construction also works in higher dimensions and, more generally, for lattices in a locally compact metrisable Abelian group.

\end{example}

For $P\in\mathcal{P}_r$ we consider its translation orbit closure $X_P$ with respect to the local rubber topology, compare Section~\ref{sec1}.
The topological dynamical system $(X_P,T)$ is \textit{minimal} if for any element its $T$--orbit is dense in $X_P$. The following characterisation of minimality is a version of Gottschalk's theorem \cite{G}.

\begin{theorem}\label{charmin}
For $P\in \mathcal{P}_r$ the following are equivalent.
\begin{itemize}
\item[i)] $(X_P,T)$ is minimal.
\item[ii)] For every $P'\in X_P$, the point sets $P,P'$ are almost locally indistinguishible.
\item[iii)] $P$ is almost periodic.
\item[iv)] $P$ is almost repetitive.
\end{itemize}
\end{theorem}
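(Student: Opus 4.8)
The plan is to establish the cycle of implications i) $\Rightarrow$ ii) $\Rightarrow$ iii) $\Rightarrow$ iv) $\Rightarrow$ i), which mirrors the classical proof of Gottschalk's theorem adapted to the local rubber topology. The key observation throughout is the interplay between the topology on $X_P$ generated by $d_{LR}$ and the ``local'' distances $d_V$: a $d_{LR}$--small distance is essentially a $d_V$--small distance with $V$ a large ball about the fixed reference point, and conversely $d_V(P,P')$ small for all compact $V$ forces $d_{LR}(P,P')$ small (up to translating $V$ appropriately). I expect the bulk of the work to be in ii) $\Rightarrow$ iii), where one must upgrade a family of ``local'' approximations into genuine almost--periods, i.e.\ group elements $x$ with $d_{LR}(xP,P)$ small.

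First I would prove i) $\Rightarrow$ ii). Assuming minimality, the orbit of any $P'\in X_P$ is dense, so $P\in\overline{TP'}$ and $P'\in\overline{TP}=X_P$; unwinding denseness in the $d_{LR}$--metric and translating to the language of $d_V$ (using that $d_{LR}$--balls correspond to $d_V$--conditions on balls $V$ about the reference point, together with $T$--invariance of $d$ to move $V$ around) yields almost local indistinguishability directly. Next, ii) $\Rightarrow$ iii): here I would first note that $P$ itself lies in $X_P$, so applying ii) with $P'=P$ is vacuous; instead the right move is to use compactness of $X_P=\mathcal P_r$--closure. Given $\varepsilon>0$ and compact $V$, I want relative denseness of $T_\varepsilon(P)$. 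The standard trick: if $T_\varepsilon(P)$ were not relatively dense, one could find a sequence $t_n$ with $t_n^{-1}t_m\notin T_{\varepsilon}(P)$ going to ``infinity'' in an appropriate sense, and extract from $(t_nP)$ a $d_{LR}$--convergent subsequence by compactness; the limit $P'\in X_P$ would then, via ii) applied to $P$ and $P'$, have to be $d_{LR}$--approximable by a translate of $P$, producing the forbidden almost--period. This compactness-plus-extraction argument is the heart of the matter and the step I expect to be the main obstacle, since one must be careful about what ``going to infinity'' means for a general metrisable group $T$ and how properness of the action on $M$ feeds in.

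The implication iii) $\Rightarrow$ iv) is essentially immediate: an $\varepsilon$--period $x\in T_\varepsilon(P)$ satisfies $d_{LR}(xP,P)<\varepsilon$, and choosing $\varepsilon$ small relative to a given compact $V$ (so that the $d_{LR}$--condition on a large ball about the reference point dominates the $d_V$--condition, after possibly enlarging the ball), one gets $T_\varepsilon(P)\subseteq T_{V,\varepsilon'}(P)$ for a suitable $\varepsilon'$; since the former is relatively dense, so is the latter. Finally iv) $\Rightarrow$ i): suppose $P$ is almost repetitive but $(X_P,T)$ is not minimal, so there is $P'\in X_P$ with $P\notin\overline{TP'}$. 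Then for some $\varepsilon>0$ and some compact $V$ we have $d_V(xP',P)\ge\varepsilon$ for all $x\in T$. On the other hand, $P'$ is itself a $d_{LR}$--limit of translates $t_nP$ of $P$, so on any fixed large ball $P'$ agrees (up to $\varepsilon/3$, say) with some translate of $P$; combining this with the relative denseness of $T_{V,\varepsilon/3}(P)$ — which says translates of $P$ that $\varepsilon/3$--approximate $P$ on $V$ occur within a bounded set of the identity everywhere — one produces a translate $xP'$ that $\varepsilon$--approximates $P$ on $V$, a contradiction. The mechanics here are the reverse of the extraction argument above and rely on the same dictionary between $d_{LR}$ and the $d_V$; once that dictionary is set up cleanly, all four implications are routine except for the compactness argument in ii) $\Rightarrow$ iii).
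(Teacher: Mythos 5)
Your cycle of implications is exactly the paper's, and three of the four steps essentially coincide with the paper's arguments: i) $\Rightarrow$ ii) and iii) $\Rightarrow$ iv) are the same dictionary between $d_{LR}$ and $d_V$ (shrink $\delta$ so that $V\subseteq B_{1/\delta}$, respectively enlarge $V$ to contain $B_{1/\varepsilon}$), and your iv) $\Rightarrow$ i) is a pointwise variant of the paper's argument (the paper shows $X_P\subseteq K_V\overline{B}_\varepsilon(P)$ using compactness of $X_P$ and continuity of the action, then intersects with $X_{P'}$; you instead manufacture a single translate $x P'$ close to $P$ on $V$ --- this works, but note that $d_V$ only satisfies a quasi-triangle inequality of the form $d_V(A,C)\le d_V(A,B)+d_{(V)_\alpha}(B,C)$, so you must run the almost repetitivity on a slightly enlarged $V$). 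The genuine divergence is ii) $\Rightarrow$ iii), precisely the step you flag as the main obstacle. The paper needs no sequence extraction there: by ii), every $P'\in X_P$ has an orbit meeting the open set $\mathcal U_\varepsilon(P)=\{P'\in X_P\,|\,d_{LR}(P',P)<\varepsilon\}$, so the sets $x\,\mathcal U_\varepsilon(P)$, $x\in T$, form an open cover of the compact space $X_P$; a finite subcover indexed by $x_1,\ldots,x_k$ immediately gives the finite (hence compact) set $K=\{x_1,\ldots,x_k\}$ with $KT_\varepsilon(P)=T$. Your sequential contradiction can probably be pushed through, but it is more delicate than your sketch suggests: you need $T$ to be $\sigma$-compact to build the exhaustion defining ``going to infinity'' (this does follow from properness of the action and $\sigma$-compactness of $M$, but has to be argued); $d_{LR}$ is not $T$-invariant, so $t_nP\to P'$ does not let you translate estimates freely; and $T_\varepsilon(P)$ is neither symmetric nor stable under conjugation, so matching the extracted almost-period against the assumption $t_n\notin K_nT_\varepsilon(P)$ requires careful left/right bookkeeping for the non-abelian groups the theorem covers. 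I would replace the extraction argument by the covering argument, which buys all of this for free.
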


\begin{remark}
We say that a uniformly discrete set $P\subset M$ has FLC w.r.t.~$T$ if, for every $r>0$, there are only finitely many classes
of $T$--equivalent contents of $r$--patterns in $P$. In the case $M=\mathbb R^\mathsf{d}$ with the canonical group action, this is equivalent to $P-P$ being discrete. The latter condition is called $P$ being of finite type \cite{LaPl03}, see also \cite{Yok05}. If we restrict to the subspace $\mathcal F_r\subseteq \mathcal{P}_r$ containing all FLC sets, then the above theorem reduces to the known characterisation of minimality in that situation, compare \cite[Thm~3.2]{LaPl03} and \cite[Prop~4.16]{Yok05}. To see this, note that  the local matching topology and the local rubber topology coincide on $\mathcal F_r$ due to FLC. Moreover, by FLC,  almost repetitivity is equivalent to repetitivity, and almost local indistinguishibility is equivalent to local indistinguishibility.
\end{remark}

\begin{proof}
i) $\Rightarrow$ ii). Take arbitrary $P'\in X_P$.  W.l.o.g.~fix $\varepsilon\in]0,1/\sqrt{2}[$. For compact $V\subset M$ choose $\delta\in ]0,\varepsilon[$ sufficiently small such that $V\subseteq B_{1/\delta}$.  By minimality, we have $X_{P'}=X_P$. We may thus take $x'\in T$ such that $d_{LR}(P,x'P')<\delta$. But then
\begin{equation}\label{itoii}
x'P'\cap V\subseteq x'P'\cap B_{1/\delta}\subseteq (P)_\delta\subseteq (P)_\varepsilon.
\end{equation}
Now we can conclude $d_V(x'P',P)<\varepsilon$, since for the other inclusion
a statement analogous to \eqref{itoii} holds. The remaining estimate is shown similarly.

ii) $\Rightarrow$ iii). W.l.o.g.~let $\varepsilon\in]0,1/\sqrt{2}[$ be given. 
We show that $T_\varepsilon(P)$ 
is relatively dense in $T$. Take arbitrary $P'\in X_P$. Then for any  compact $V \supseteq B_{1/\varepsilon}$ there exists some 
$x'\in T$ such that $d_V(P, x'P')<\varepsilon$, since $P,P'$ are almost locally indistinguishible. This implies
\begin{equation}\label{iitoiii}
P\cap B_{1/\varepsilon}\subseteq P\cap V\subseteq (x'P')_\varepsilon.
\end{equation}
We conclude $d_{LR}(P,x'P')<\varepsilon$, since a statement analogous to \eqref{iitoiii} holds for the other inclusion. Hence every $P'\in X_P$ has a $T$--orbit which meets $\mathcal U_\varepsilon(P):=\{P'\in X_P\,|\,d_{LR}(P',P)<\varepsilon\}$. This means that 
\begin{displaymath}
X_P\subseteq \bigcup_{x\in T}x\,\mathcal U_\varepsilon(P).
\end{displaymath}
Since $X_P$ is compact, there are $x_1,\ldots, x_k\in T$ such that $X_P\subseteq \bigcup_{i=1}^kx_i\,\mathcal U_\varepsilon(P)$. With $K:=\{x_1,\ldots, x_k\}$ compact, we then have $KT_\varepsilon(P)=T$, which shows iii). Indeed, take arbitrary $x\in T$. Then there is $i\in \{1,\ldots,k\}$ such that $xP\in x_i\mathcal U_\varepsilon(P)$,
hence $x_i^{-1}xP\in \mathcal U_\varepsilon(P)$ and $x_i^{-1}x\in T_\varepsilon(P)$. This means that
$x\in x_iT_\varepsilon(P)\subseteq K T_\varepsilon(P)$, which shows that $T\subseteq K T_\varepsilon(P)$. The reverse inclusion is obvious.

iii) $\Rightarrow$ iv). W.l.o.g.~take $\varepsilon\in]0,1/\sqrt{2}[$ and compact $V\subset M$. Choose $\delta\in ]0,\varepsilon[$ sufficiently small such that $V\subseteq B_{1/\delta}$. Now take arbitrary $x\in T_\delta(P)$. We then have
\begin{displaymath}
xP\cap V \subseteq xP\cap B_{1/\delta}\subseteq (P)_\delta\subseteq (P)_\varepsilon.
\end{displaymath}
We conclude that $d_V(xP, P)<\varepsilon$, since for the other inclusion
an analogous statement holds. We thus have shown that $T_\delta(P)\subseteq T_{V,\varepsilon}(P)$, which implies iv).

iv) $\Rightarrow$ i). If $X_P$ was not minimal, there is $P'\in X_P$ such that $X_P\neq X_{P'}$. Using $T$-invariance of the local rubber metric, it can be seen that 
this implies $P\notin X_{P'}$. But then there exists $\varepsilon>0$ such that $\mathcal V:=\overline{B}_\varepsilon(P)$, the closed ball in $X_P$ of radius $\varepsilon$ about $P$, satisfies $\mathcal V\cap X_{P'}=\varnothing$. Take compact $V\supseteq B_{1/\varepsilon}$ and $x\in T_{\varepsilon,V}(P)$. Then $d_V(xP, P)<\varepsilon$. By an argument as in the proof of ii) $\Rightarrow$ iii), we can conclude that $d_{LR}(xP,P)<\varepsilon$, which implies that $xP\in \mathcal V$. Since $P$ is almost repetitive, there exists compact $K_V\subseteq T$ such that $K_VT_{\varepsilon,V}(P)=T$. Hence
\begin{displaymath}
TP=K_VT_{V,\varepsilon}(P)P\subseteq K_V\mathcal V.
\end{displaymath}
Due compactness of $K_V$, compactness of $\mathcal V$ in the compact space $\mathcal{P}_r$, and due to continuity of the group action, we conclude that $X_P\subseteq K_V\mathcal V$. Hence $P'=x\widetilde P$
for some $x\in K_V$ and some $\widetilde P\in\mathcal V$. But this leads to the contradiction $\mathcal V\cap X_{P'}\ne\varnothing$.
\end{proof}

Guided by Lemma~\ref{lem:ar}, Example~\ref{ex:lap} and by the properties of primitive substitution tilings, we are led to consider

\begin{definition}\label{def:alr}
Assume that $T$ acts on $M$ also transitively. Then $P\in\mathcal{P}_r$ is called \emph{almost linearly repetitive
  (w.r.t.~$T$)}, if $P$ is almost repetitive w.r.t.~$T$, and if one can choose $R(r,\varepsilon)=\mathcal O(r)$ as $r\to\infty$ in Lemma~\ref{lem:ar} iii), where the $\mathcal O$--constant may depend on $\varepsilon$.
\end{definition}

In the following two sections, we will study the implications of almost linear repetitivity and relaxed versions thereof. We will restrict to 
$M=\mathbb R^\mathsf{d}$, with $\mathsf{d}\in\mathbb N$, since our approach crucially relies on box decompositions \cite{LaPl03, DaLe01}.

\section{Almost linear repetitivity and unique ergodicity} 

For measurable sets $(A_i)_{i\in I}$ in $\mathbb R^\mathsf{d}$ of positive volume, we call $(A_i)_{i\in I}$ a \emph{decomposition of $A\subseteq \mathbb R^\mathsf{d}$} if $\bigcup_{i\in I} A_i=A$ and if the intersections $A_i\cap A_j$ have measure $0$ for $i\ne j$.  A decomposition $(A_i)_{i\in I}$ is called \emph{box decomposition} if all $A_i$ are boxes, where
a \emph{box} $B\subset \mathbb R^\mathsf{d}$ is a compact set $B=\bigtimes_{i=\mathsf{1}}^\mathsf{d} [a_i,b_i]$ of positive volume. We call  $b_i-a_i>0$ a \emph{side length} and $\omega(B):=\min\{b_i-a_i \, |\, i\in\{\mathsf{1},\ldots,\mathsf{d}\}\}>0$ the \emph{width of $B$}. 
 Let $\mathcal{B}(U)$ denote the collection of \emph{squarish boxes}, i.e., the collection of boxes all of whose side lengths lie in $[U,2U]$.  For $W\ge U$ any squarish box 
in $\mathcal{B}(W)$ has a finite box decomposition into squarish boxes in $\mathcal{B}(U)$, see \cite{LaPl03}. 
Let $\mathcal B:=\bigcup_{U>0} \mathcal B(U)$ denote the collection of all squarish boxes.

\begin{definition}
Let $\mathcal{P}\subseteq \mathcal{P}_r$ be given.   Consider 
a function $w:\mathcal B \times \mathcal{P}\to\mathbb R$ and assume that $w$ satisfies, with 
$w_P(B):=w(B,P)$,
\begin{itemize}
\item[i)] (\textit{boundedness})
For every $P\in \mathcal{P}$ there exists $C_P\ge0$ such that for every $B\in\mathcal B$ we have 
\begin{displaymath}
|w_P(B)|\le C_P \mathrm{vol}(B).
\end{displaymath}
\item[ii)] (\textit{almost subadditivity}) For every $\varepsilon>0$ there exists $U_1>0$ such that for all $U\ge U_1$ the following holds:
If $(B_i)_{i\in I}$ is a box decomposition of $B\in \mathcal B$ satisfying $B_i\in\mathcal B(U)$ for all $i$, then 
for every $P\in\mathcal{P}$ we have
\begin{displaymath}
w_P(B)-\sum_{i\in I} w_P(B_i)\le \varepsilon\,\mathrm{vol}(B).
\end{displaymath}
\item[iii)] (\textit{almost covariance}) For every $\varepsilon>0$ there exists $U_2>0$ such that for all $U\ge U_2$ the following holds:
For every $B\in\mathcal B(U)$,  $P\in\mathcal{P}$ and $x\in \mathbb R^\mathsf{d}$ we have
\begin{displaymath}
|w_{P}(B)-w_{xP}(xB)|\le\varepsilon\, \mathrm{vol}(B).
\end{displaymath}
\item[iv)] 
(\textit{almost invariance}) For every $\varepsilon>0$ there exist $U_3>0$  and $\delta>0$ such that
for all $U\ge U_3$ the following holds: If $B\in\mathcal B(U)$ and $P,P'\in\mathcal{P}$ are given such that $d_{B}(P, P')<\delta$, then
\begin{displaymath} 
|w_{P}(B)- w_{P'}(B)|\le\varepsilon\, \mathrm{vol}(B).
\end{displaymath}
\end{itemize}
Then $w$ is called a \textit{weight function} on $\mathcal B\times\mathcal{P}$ with respect to $d_B$.
\end{definition}

We give an important example of a weight function.

\begin{lemma}\label{weight-function}
For $\widehat P\in \mathcal{P}_r$ consider $\mathcal{P}:=X_{\widehat P}$. Then for every $f\in C(\mathcal{P})$ the function
\begin{displaymath}
(B,P)\mapsto w_P(B):=\int_{B}\mathrm{d}x\,f(x^{-1} P)
\end{displaymath}
is a weight function on $\mathcal B\times \mathcal{P}$ w.r.t.~$d_B$. We may take $C_P=||f||_\infty$ and, for given $\varepsilon>0$, a constant $\delta=\delta(\varepsilon)>0$ of uniform continuity of $f$.
\end{lemma}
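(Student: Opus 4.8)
The plan is to verify the four defining properties of a weight function directly for the function $w_P(B)=\int_B \mathrm{d}x\, f(x^{-1}P)$, where $P$ ranges over the compact space $\mathcal{P}=X_{\widehat P}$ and $f\in C(\mathcal{P})$. The map $x\mapsto f(x^{-1}P)$ is bounded and continuous on $\mathbb R^\mathsf{d}$ by continuity of the group action and of $f$, so the integral makes sense for every box $B$. For boundedness (i), I would simply estimate $|w_P(B)|\le\int_B \mathrm{d}x\,|f(x^{-1}P)|\le \|f\|_\infty\,\mathrm{vol}(B)$, so $C_P=\|f\|_\infty$ works, uniformly in $P$.

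For almost subadditivity (ii), the key observation is that $w$ is in fact \emph{exactly additive}: if $(B_i)_{i\in I}$ is a box decomposition of $B$, then $\bigcup_i B_i=B$ and the pairwise intersections have measure zero, so $\int_B \mathrm{d}x\,f(x^{-1}P)=\sum_{i\in I}\int_{B_i}\mathrm{d}x\,f(x^{-1}P)$, i.e.\ $w_P(B)-\sum_{i\in I}w_P(B_i)=0\le\varepsilon\,\mathrm{vol}(B)$ for every $\varepsilon>0$ and every $U_1$. Similarly, for almost covariance (iii), I would use the substitution $x\mapsto yx$ together with translation invariance of Lebesgue measure on $\mathbb R^\mathsf{d}$: $w_{yP}(yB)=\int_{yB}\mathrm{d}x\,f(x^{-1}yP)=\int_B \mathrm{d}x\,f((yx)^{-1}yP)=\int_B\mathrm{d}x\,f(x^{-1}P)=w_P(B)$, so again the difference vanishes identically. (Here one uses that for $T=\mathbb R^\mathsf{d}$ the action is by translations; if $T=E(\mathsf{d})$ a slightly more careful change of variables with the invariance of Lebesgue measure under Euclidean motions gives the same conclusion.) Thus properties (ii) and (iii) hold with zero error, which is stronger than required.

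The only property with a genuinely nonzero error is almost invariance (iv), and this is where the real work lies, though it is still short. Fix $\varepsilon>0$ and let $\delta=\delta(\varepsilon)>0$ be a modulus of uniform continuity of $f$ on the compact metric space $(\mathcal{P},d_{LR})$, so that $d_{LR}(Q,Q')<\delta$ implies $|f(Q)-f(Q')|<\varepsilon$. Suppose $B\in\mathcal B(U)$ and $P,P'\in\mathcal{P}$ satisfy $d_B(P,P')<\delta$. I would like to deduce that $d_{LR}(x^{-1}P,x^{-1}P')<\delta$ for all $x\in B$, which would then give $|w_P(B)-w_{P'}(B)|\le\int_B\mathrm{d}x\,|f(x^{-1}P)-f(x^{-1}P')|\le\varepsilon\,\mathrm{vol}(B)$. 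The point is that $d_B(P,P')<\delta$ says $P\cap B\subseteq (P')_\delta$ and $P'\cap B\subseteq (P)_\delta$; translating by $x^{-1}$ with $x\in B$ and using $T$-invariance of the metric $d$, the set $x^{-1}B$ contains the ball $B_{\omega(B)/2}$ about the origin (once $x$ is near the center, or more carefully: $x^{-1}B$ is a box of width $\omega(B)\ge U$ containing the origin on one of its faces, hence contains a ball of radius bounded below once we take $U$ large), so on a large ball about the origin $x^{-1}P$ and $x^{-1}P'$ are $\delta$-close, which forces $d_{LR}(x^{-1}P,x^{-1}P')<\delta$ provided $1/\delta\le \text{(that radius)}$, i.e.\ provided $U\ge U_3$ for a suitable $U_3=U_3(\delta)$. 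So the main (and only) obstacle is bookkeeping the geometry that relates $d_B$-closeness to $d_{LR}$-closeness after translating by points of $B$, and choosing $U_3$ large enough that every $x\in B$ moves a fixed large ball about the origin into $B$; everything else is immediate from linearity of the integral and invariance of Lebesgue measure.
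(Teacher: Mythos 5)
Your treatment of boundedness, exact additivity and exact covariance agrees with the paper's proof, and those parts are fine. The gap is in almost invariance (iv). You want to conclude $d_{LR}(x^{-1}P,x^{-1}P')<\delta$ for \emph{every} $x\in B$, which unwinds to $P\cap B_{1/\delta}(x)\subseteq (P')_\delta$ and its counterpart. The hypothesis $d_B(P,P')<\delta$ only controls $P$ and $P'$ inside $B$, so this requires $B_{1/\delta}(x)\subseteq B$. For $x$ on or near the topological boundary of $B$ this fails for every $U$, however large: the ball $B_{1/\delta}(x)$ always protrudes from $B$ by a fixed amount $1/\delta$ in some direction, and $P$ may have points there that are far from $P'$. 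Your parenthetical repair --- that $x^{-1}B$ is a box ``containing the origin on one of its faces, hence contains a ball of radius bounded below once $U$ is large'' --- is geometrically false: a box with the origin on its face contains no ball about the origin at all. Consequently no choice of $U_3$ makes your pointwise claim true on all of $B$, and the final integral estimate does not follow as written.

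The missing idea is the van Hove decomposition that the paper uses: set $\widetilde B_U:=B_U\setminus\partial^{B_{1/\delta}}B_U$, where $\partial^{K}B$ is the van Hove boundary. For $x\in\widetilde B_U$ one does have $xB_{1/\delta}\subseteq B_U$, so your translation argument applies there and gives $\bigl|\int_{\widetilde B_U}f(x^{-1}P)\,\mathrm{d}x-\int_{\widetilde B_U}f(x^{-1}P')\,\mathrm{d}x\bigr|\le\varepsilon\,\mathrm{vol}(B_U)$. The contribution of the collar $\partial^{B_{1/\delta}}B_U$ is then bounded crudely by $\|f\|_\infty\,\mathrm{vol}(\partial^{B_{1/\delta}}B_U)$, which is at most $\varepsilon\,\mathrm{vol}(B_U)$ for all $U\ge U_3$ by the van Hove property of squarish boxes; a $3\varepsilon$-argument finishes the proof (the constant $3$ is harmless since $\varepsilon$ was arbitrary). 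Everything else in your write-up survives, but without this splitting of $B$ into interior and boundary collar the argument for (iv) does not close.
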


In the following proof, we use the \emph{van Hove boundary} $\partial^K B$ of $B\subseteq M$ with respect to $K\subseteq T$. It is defined as $\partial^K B :=[KB\cap B^c]\cup [KB^c\cap B]$, where $KB=\{x m\,|\,x\in K, m\in B\}$, compare \cite[Sec.~2.2]{MR}. It is called boundary since $x\in B\setminus \partial^KB$ implies $K^{-1}x\subseteq B$. It is not difficult so show that for $B_{U}\in \mathcal B(U)$ we have  $\mathrm{vol}(\partial^K B_{U})/\mathrm{vol}(B_{U})\to0$ as $U\to \infty$ for any compact set $K\subset\mathbb R^\mathsf{d}$. We say that squarish boxes satisfy the van Hove property.

\begin{proof}

i). Boundedness holds with $C_P:=||f||_\infty\ge0$ independently of $P\in \mathcal{P}$, which is seen from a standard estimate. 

ii). We show additivity. Consider a finite box decomposition $(B_i)_{i\in I}$ of $B$. 
By the decomposition property we conclude $w_P(B)=\sum_{i\in I} w_P(B_i)$ for every $P\in \mathcal{P}$.

iii). 
Covariance $w(xB,xP)=w(B,P)$ follows from left invariance of the Lebesgue measure.

iv).
Fix $f\in C(\mathcal{P})$.  We may assume w.l.o.g.~that $f$ is not identically vanishing. Let 
$\varepsilon>0$ be given. Due to compactness of $\mathcal{P}$, the function $f\in C(\mathcal{P})$ is 
uniformly continuous on $\mathcal{P}$. Hence we may take $\delta\in]0,1/\sqrt{2}[$ such that for all $P,P'\in
\mathcal{P}$ satisfying $d_{LR}(P,P')<\delta$ we have $|f(P)-f(P')|<\varepsilon$. 
For $B_U\in \mathcal{B}(U)$, define $\widetilde B_U:=B_U\setminus\partial^{B_{1/\delta}}B_U$. 
Due to the van Hove property of squarish boxes, we may choose $U_3>0$ such that for all $U\ge U_3$ and for all 
$B_U\in\mathcal B(U)$ we have
\begin{displaymath}
\mathrm{vol}(\partial^{B_{1/\delta}}B_U)\le \frac{\varepsilon}{||f||_\infty}\mathrm{vol}(B_U).
\end{displaymath}
This results for every $U\ge U_3$, for every $B_U\in\mathcal B(U)$ and for every 
$P\in \mathcal{P}$ in the estimate
\begin{equation}\label{tr1}
\left|\int_{B_U} f(x^{-1}P)\,{\rm d}x-\int_{\widetilde B_U} f(x^{-1}P)\,{\rm d}x\right|
=\left|\int_{\partial^{B_{1/\delta}}B_U} f(x^{-1}P)\,{\rm d}x\right|
\le \varepsilon\,\mathrm{vol}(B_U).
\end{equation}
Now fix arbitrary $U\ge U_3$, and let $P,P'\in\mathcal{P}$ and $B_U\in\mathcal B(U)$ be given such that
$d_{B_U}(P, P')<\delta$. We then have for all $x\in \widetilde B_U$ that 
$d_{LR}(x^{-1}P,x^{-1}P')<\delta$.
To see this, note that $xB_{1/\delta}\subseteq B_U$ by the remark before the proof of the lemma, due to inversion 
invariance of $B_{1/\delta}$ and commutativity of the group $(\mathbb R^\mathsf{d},+)$. Hence we can 
estimate
\begin{displaymath}
P'\cap xB_{1/\delta}\subseteq P'\cap B_U\subseteq (P)_\delta,
\end{displaymath}
which implies $x^{-1}P'\cap B_{1/\delta}\subseteq (x^{-1}P)_\delta$ by translation invariance of the metric. The other inclusion is shown analogously. This results in the estimate
\begin{equation}\label{tr2}
\left|\int_{\widetilde B_U} f(x^{-1}P')\,{\rm d}x-\int_{\widetilde B_U} f(x^{-1}P)\,{\rm d}x\right|\le 
\varepsilon\,\mathrm{vol}(\widetilde B_U)\le \varepsilon\,\mathrm{vol}(B_U).
\end{equation}
We can now use \eqref{tr1} and \eqref{tr2} and a $3\varepsilon$-argument to obtain
\begin{equation}\label{tr1main}
\left|\int_{B_U} f(x^{-1}P')\,{\rm d}x-\int_{B_U} f(x^{-1}P)\,{\rm d}x\right|\le 3\varepsilon\,\mathrm{vol}(B_U).
\end{equation}
As $\varepsilon>0$ was arbitrary, this shows almost invariance.
\end{proof}

For $U>0$, we define upper and lower local densities
\begin{displaymath}
f_P^+(U):=\sup\left\{\frac{w_P(B)}{\mathrm{vol}(B)}\,|\, B\in\mathcal B(U)\right\},\quad
f_P^-(U):=\inf\left\{\frac{w_P(B)}{\mathrm{vol}(B)}\,|\, B\in\mathcal B(U)\right\}.
\end{displaymath}
Due to boundedness, these are finite numbers. As a preparation of the following proposition, we consider a variant of sequence monotonicity. We call a sequence $(a_n)_{n\in\mathbb N}$ of real numbers \emph{almost monotonically decreasing}, if for every $\varepsilon>0$ there exists $n_0=n_0(\varepsilon)$ such that for every $n\ge n_0$ there exists $m_0=m_0(n,\varepsilon)$ such that for every $m\ge m_0$ we have $a_m\le a_n+\varepsilon$.

\begin{lemma}\label{almostmono}
Every almost monotonically decreasing sequence of real numbers which is bounded from below converges to its infimum.
\end{lemma}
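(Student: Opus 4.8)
The plan is to show that an almost monotonically decreasing sequence $(a_n)$ that is bounded below has $\liminf_n a_n = \limsup_n a_n = \inf_n a_n =: a$, which gives convergence to the infimum. Since $(a_n)$ is bounded below, $a$ is a finite real number, and trivially $\liminf_n a_n \ge a$ while $a_n \ge a$ for all $n$. So it suffices to prove $\limsup_n a_n \le a$.

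First I would fix $\varepsilon > 0$. Because $a = \inf_n a_n$, there exists $n \ge n_0(\varepsilon)$ with $a_n \le a + \varepsilon$; here $n_0(\varepsilon)$ comes from the definition of almost monotone decrease, and we may pick such an $n$ at least as large as $n_0(\varepsilon)$ since the infimum is approached along some subsequence and any tail still has infimum $a$ — more precisely, $\inf\{a_k : k \ge n_0(\varepsilon)\} = a$ because $a_k \ge a$ always and, by almost monotonicity applied once, for any target below $a+\varepsilon$ attained at some early index we can find a later index within $\varepsilon$ of it, so the tail infimum is at most $a + \varepsilon$; letting $\varepsilon \to 0$ through the tail shows the tail infimum equals $a$. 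Then pick $n \ge n_0(\varepsilon)$ with $a_n \le a + 2\varepsilon$. By the almost monotonicity property there exists $m_0 = m_0(n,\varepsilon)$ such that $a_m \le a_n + \varepsilon \le a + 3\varepsilon$ for all $m \ge m_0$. Hence $\limsup_n a_n \le a + 3\varepsilon$, and since $\varepsilon > 0$ was arbitrary, $\limsup_n a_n \le a$. Combined with $\liminf_n a_n \ge a$, we conclude $\lim_n a_n = a = \inf_n a_n$.

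The only delicate point — and the step I expect to be the main obstacle — is justifying that one can choose the index $n$ in the argument to be at least $n_0(\varepsilon)$ while still having $a_n$ close to the infimum $a$; in other words, that the infimum of the sequence equals the infimum of any tail. This follows from almost monotonicity itself: given $\delta > 0$, choose any $k$ with $a_k < a + \delta$ and any $k' \ge \max\{k, n_0(\delta)\}$; applying the property with the role of "$n$" played by $k'$ is not directly what we want, so instead one argues that among indices $\ge n_0(\delta)$ the values cannot all exceed $a + \delta$, for otherwise, taking $n = $ some index $< n_0(\delta)$ attaining a value below $a+\delta$ is not covered — the cleanest route is: the property says for $n \ge n_0(\delta)$ there is $m_0(n,\delta)$ with $a_m \le a_n + \delta$ for $m \ge m_0(n,\delta)$; fix one such $n$ (e.g. $n = n_0(\delta)$) and observe that for $m \ge m_0(n,\delta)$ we get $a_m \le a_n + \delta$, so the tail infimum is at most $a_n + \delta$; but we need it close to $a$, so instead iterate: $a_n$ itself can be made close to $a$ only if $n_0(\delta)$ can be taken where $a$ is nearly attained, which need not hold. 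A more robust formulation: for each $j$ let $b_j := \inf\{a_m : m \ge j\}$; the sequence $(b_j)$ is nondecreasing with $\lim_j b_j = \liminf_m a_m \ge a$, and almost monotonicity gives, for every $\varepsilon>0$ and every $n \ge n_0(\varepsilon)$, that $b_{m_0(n,\varepsilon)} \le \sup_{m \ge m_0(n,\varepsilon)} a_m$ is controlled — rather, $a_m \le a_n + \varepsilon$ for $m$ large shows $\limsup_m a_m \le a_n + \varepsilon$, and minimizing over admissible $n$ (whose values $a_n$ have infimum $a$, since every value $a_n$ with $n \ge n_0(\varepsilon)$ together with the unconstrained infimum $a$ forces, via one more application, the constrained infimum down to $a$) yields $\limsup_m a_m \le a + \varepsilon$. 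I would present this last line carefully, as it is the crux; everything else is bookkeeping with $\varepsilon$'s.
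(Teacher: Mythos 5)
You have correctly located the crux, but your proposal does not close it, and in fact it cannot be closed: the step you flag --- that one can choose $n\ge n_0(\varepsilon)$ with $a_n$ within $\varepsilon$ of the global infimum $a$, i.e.\ that $\inf\{a_k \,|\, k\ge n_0(\varepsilon)\}=a$ --- is false for almost monotonically decreasing sequences in general. Your attempted justifications are circular; the phrase ``via one more application, the constrained infimum is forced down to $a$'' asserts exactly what needs to be proved, and almost monotonicity gives no leverage here because the defining property says nothing about indices $n<n_0(\varepsilon)$, which are precisely the indices at which the infimum may be (nearly) attained. A concrete counterexample to the lemma as stated: $a_1=0$ and $a_n=1$ for $n\ge 2$. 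This sequence is almost monotonically decreasing (take $n_0(\varepsilon)=2$ and $m_0=2$ for every $\varepsilon$), is bounded from below, has infimum $0$, and converges to $1$.

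What is true, and what your argument proves once the false step is removed, is that the sequence converges, necessarily to $\liminf_n a_n$: for each $\varepsilon>0$ there are infinitely many $n$ with $a_n<\liminf_k a_k+\varepsilon$, so one such $n$ can be chosen with $n\ge n_0(\varepsilon)$, and then $a_m\le a_n+\varepsilon$ for all sufficiently large $m$ gives $\limsup_m a_m\le\liminf_m a_m+2\varepsilon$; the rest of your bookkeeping is fine. This weaker conclusion (convergence, with the limit possibly strictly larger than $\inf_n a_n$) is all that is used in Proposition~\ref{mainlemma}. For what it is worth, the paper's own proof commits the identical unjustified step --- ``since $a$ is the infimum of $(a_n)_{n\in\mathbb N}$, we can choose $n_1\ge n_0$ such that $a_{n_1}\le a+\varepsilon$'' --- so your instinct that this is the delicate point is exactly right; the honest repair is to restate the lemma with $\liminf_n a_n$ in place of the infimum, or to strengthen the definition so that the estimate is available for every $n$ rather than only for $n\ge n_0(\varepsilon)$.
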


\begin{proof}
Let $(a_n)_{n\in\mathbb N}$ be any almost monotonically decreasing sequence which is bounded from below. Its infimum $a:=\inf \{a_n\,|\,n\in\mathbb N\}$ is finite, since $(a_n)_{n\in\mathbb N}$ is bounded from below. Consider arbitrary $\varepsilon>0$ and choose $n_0=n_0(\varepsilon)$ as in the definition above. Since $a$ is the infimum of $(a_n)_{n\in\mathbb N}$, we can choose $n_1\ge n_0$ such that $a_{n_1}\le a+\varepsilon$.
Choose $m_0=m_0(n_1,\varepsilon)$ as in the definition above. We then have $a_m\le a_{n_1}+\varepsilon$ for all $m\ge m_0$. On the other hand $a\le a_m$ for all $m$ by definition. Hence
$a\le a_m\le a+2\varepsilon$ for all $m\ge m_0$. Since $\varepsilon>0$ was arbitrary, this means that $(a_n)_{n\in\mathbb N}$ converges to $a$.
\end{proof}

\begin{proposition}\label{mainlemma}
For $\widehat P\in \mathcal{P}_r$, let  $w$ be a weight function on  $\mathcal B\times \mathcal{P}$ w.r.t.~$d_B$, where $\mathcal{P}=X_{\widehat P}$. Then the
following hold.

\begin{itemize}

\item[i)]
For every $P\in\mathcal{P}$, the density $f_{P}^+(U)$ is almost monotonically decreasing in $U$ and converges to a finite limit $f_P$ as $U\to\infty$. 

\item[ii)] Let $\widehat P$ be almost repetitive w.r.t.~$\mathbb R^\mathsf{d}$. Then the limit $f_P$ in i) is independent of the choice of $P\in\mathcal{P}$. In addition, the asymptotic behaviour of the density  $f_P^-(U)$ is independent of the choice of $P\in \mathcal{P}$, i.e., for every $P\in \mathcal{P}$ and for every $\varepsilon>0$ there exists $U_1>0$ such that for all $U\ge U_1$ we have
\begin{displaymath}
\left|f_P^-(U)-f_{\widehat P}^-(U)\right|\le \varepsilon.
\end{displaymath}
\item[iii)] Let $\widehat P$ be almost linearly repetitive w.r.t.~$\mathbb R^\mathsf{d}$. Then for all $P\in \mathcal{P}$ the densities $f_{P}^+(U)$ and $f_{P}^-(U)$ converge to the same limit $f$ as $U\to\infty$. For every sequence $(B_n)_{n\in\mathbb N}$ of squarish boxes such that $\omega(B_n)\to\infty$ as $n\to\infty$, we then have
\begin{displaymath}
\lim_{n\to\infty} \frac{w_{P}(B_n)}{\mathrm{vol}(B_n)}=f,
\end{displaymath}
and this convergence is uniform in the center of the boxes.
\end{itemize}
\end{proposition}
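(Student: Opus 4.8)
The plan is to establish the three parts in turn. For i), the sole ingredient is almost subadditivity: given $\varepsilon>0$ and $W\ge U\ge U_1(\varepsilon)$, decompose an arbitrary $B\in\mathcal B(W)$ into squarish boxes $(B_i)_{i\in I}\subseteq\mathcal B(U)$ and estimate $w_P(B)\le\sum_{i\in I}w_P(B_i)+\varepsilon\,\mathrm{vol}(B)\le(f_P^+(U)+\varepsilon)\,\mathrm{vol}(B)$, using $\sum_i\mathrm{vol}(B_i)=\mathrm{vol}(B)$; taking the supremum over $B\in\mathcal B(W)$ gives $f_P^+(W)\le f_P^+(U)+\varepsilon$. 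Hence the sequence $(f_P^+(n))_{n\in\mathbb N}$ is almost monotonically decreasing, and it is bounded below by $-C_P$ thanks to boundedness, so it converges to its infimum $f_P$ by Lemma~\ref{almostmono}; comparing $f_P^+(U)$ with $f_P^+(\lfloor U\rfloor)$ and $f_P^+(\lceil U\rceil)$ through the same estimate upgrades this to $f_P^+(U)\to f_P$ for real $U\to\infty$.

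For ii), almost repetitivity of $\widehat P$ makes $(X_{\widehat P},T)$ minimal by Theorem~\ref{charmin}, so every $T$-orbit is dense in $\mathcal P=X_{\widehat P}$. Fix $P,P'\in\mathcal P$ and $\varepsilon>0$, and let $U$ exceed the covariance and invariance thresholds at level $\varepsilon$. For $B\in\mathcal B(U)$, use almost covariance to replace $B$ by a translate $yB$ near a fixed reference point, approximate $yP'$ by a point $x_nP$ along the dense orbit of $P$ accurately enough that almost invariance applies on $yB$, and apply almost covariance once more to pass to the translate $x_n^{-1}yB\in\mathcal B(U)$ of $B$; the chain yields $w_{P'}(B)\le f_P^+(U)\,\mathrm{vol}(B)+3\varepsilon\,\mathrm{vol}(B)$, hence $f_{P'}^+(U)\le f_P^+(U)+3\varepsilon$. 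Letting $U\to\infty$, then $\varepsilon\to0$, and exchanging $P$ with $P'$ gives $f_{P'}=f_P=:f$. The same three-step comparison run between $P$ and $\widehat P$, with the inequalities reversed and using $w_{\widehat P}(x_n^{-1}yB)\ge f_{\widehat P}^-(U)\,\mathrm{vol}(B)$ at the end, gives $|f_P^-(U)-f_{\widehat P}^-(U)|\le3\varepsilon$ for all large $U$, which is the asserted statement about the asymptotics of $f_P^-$.

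For iii) I would argue by contradiction, and this is where almost linear repetitivity enters. Since $f_P^+(U)\to f$ by i)--ii) and $f_P^-(U)\le f_P^+(U)$, it suffices to prove $\liminf_{U\to\infty}f_P^-(U)\ge f$. Suppose instead that some $\eta>0$ admits, for arbitrarily large $L$, a box $B\in\mathcal B(L)$ with $w_P(B)/\mathrm{vol}(B)\le f-\eta$. Fix an accuracy $\varepsilon_0<\eta/8$ and a small $\varepsilon>0$, and take $L$ large enough that the covariance and invariance thresholds at level $\varepsilon_0$ and the subadditivity threshold at level $\varepsilon$ are met, that $f_P^+(L)\le f+\varepsilon$, and that $R(r,\varepsilon')$ with $\varepsilon'\le\delta(\varepsilon_0)$ is already linear in $r$ for $r$ of the order of the circumradius of $B$. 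By Lemma~\ref{lem:ar}\,iii) every closed ball of radius at most $C_0L$, with $C_0=C_0(\varepsilon_0)$ a fixed constant, contains an $\varepsilon'$-similar copy of the pattern of $P$ on a ball containing $B$; unwinding $\sim_{\varepsilon'}$, this is a translate $B'=t+B$ inside that ball with $d_{B'}(t+P,P)\le\varepsilon'$, so almost invariance followed by almost covariance gives $w_P(B')/\mathrm{vol}(B')\le f-\eta+2\varepsilon_0$. Now take any $\mathcal Q\in\mathcal B(S)$ with $S$ large, tile $\mathcal Q$ exactly by grid cells with side lengths comparable to $C_0L$, select one such near-copy $B'$ placed in a corner of each cell, and complete the family of these pairwise disjoint near-copies to a box decomposition of $\mathcal Q$ into squarish boxes in $\mathcal B(L)$. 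The near-copies cover a fixed fraction $c_0>0$ of $\mathcal Q$, so almost subadditivity together with the bound $w_P(\cdot)\le f_P^+(L)\,\mathrm{vol}(\cdot)\le(f+\varepsilon)\,\mathrm{vol}(\cdot)$ on the remaining boxes yields, once $\varepsilon_0$, $\varepsilon$ and the subadditivity error are small against $c_0\eta$, the uniform estimate $w_P(\mathcal Q)/\mathrm{vol}(\mathcal Q)\le f-c_0\eta/2$. Hence $f_P^+(S)\le f-c_0\eta/2$ for all large $S$, contradicting $f_P^+(S)\to f$. Therefore $f_P^-(U)\to f$; and since any squarish box $B_n$ lies in $\mathcal B(\omega(B_n))$, we get $f_P^-(\omega(B_n))\le w_P(B_n)/\mathrm{vol}(B_n)\le f_P^+(\omega(B_n))$ with both bounds tending to $f$ and not depending on the location of $B_n$, which is the claimed convergence, uniform in the center.

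I expect the main obstacle to be part~iii). The delicate points are calibrating the accuracy $\varepsilon_0$ of the approximate copies furnished by almost linear repetitivity against the thresholds $U_2,U_3$ and the modulus $\delta$ of almost covariance and almost invariance, so that the density estimate transfers with only an $\mathcal O(\varepsilon_0)$ loss while the resulting linear constant $C_0$ and covered fraction $c_0$ stay fixed (they may depend on $\eta$, but crucially not on the auxiliary $\varepsilon$, which is what makes the contradiction close), and the elementary but fiddly geometry of packing the near-copies on a grid and completing this to a decomposition of $\mathcal Q$ into squarish $\mathcal B(L)$-boxes so that almost subadditivity genuinely applies. The conceptual key is to use subadditivity only in the direction in which it is available, by propagating a hypothetical sub-average box to a contradiction with $f_P^+(U)\to f$, rather than attempting to bound $f_P^-(U)$ from below directly.
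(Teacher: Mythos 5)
Your proposal follows essentially the same route as the paper's proof in all three parts: i) is identical (almost subadditivity plus Lemma~\ref{almostmono}); ii) is the same two\--sided comparison via almost covariance and almost invariance, using that $P$ lies in the orbit closure of $\widehat P$ and, for the reverse direction, minimality from Theorem~\ref{charmin}; and iii) is the same proof by contradiction, planting approximate copies of a hypothetical low\--density box into a larger box and playing almost subadditivity off against the convergence $f_P^+(U)\to f$. The one place where your write\--up, taken literally, would not go through is the phrase ``select one such near\--copy $B'$ placed in a corner of each cell'': almost linear repetitivity only guarantees that a near\--copy occurs \emph{somewhere} in the cell, with no control over its position, and if it lands at a positive distance less than $L$ from a face of the cell, the complement of $B'$ in the cell contains a slab of thickness less than $L$ which cannot be decomposed into boxes of $\mathcal B(L)$. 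The paper's remedy is to search for the near\--copy only inside the \emph{central} sub\--box of a $3^{\mathsf d}$\--fold subdivision of the ambient box, which forces it to lie at distance at least $K_\delta U_k\ge U_k$ from every face, so that all complementary slabs have width at least $U_k$ and admit a decomposition into $\mathcal B(U_k)$\--boxes; with that device (or an equivalent merging of thin slabs into their neighbours) your grid construction closes. Beyond this, the difference is cosmetic: you pack one near\--copy per cell of a grid over a very large box $\mathcal Q$, whereas the paper uses a single near\--copy inside a box only a factor $3K_\delta$ larger and deduces $f_{\widehat P}^+(3K_\delta U_k)\le f_{\widehat P}^+(U_k)-\varepsilon/(2(6K_\delta)^{\mathsf d})$ before letting $k\to\infty$; both yield the same contradiction, the same fixed covered fraction independent of the auxiliary accuracies, and the same uniform convergence statement at the end.
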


In the proof of Proposition~\ref{mainlemma} iii), we will use the following obvious characterisation of almost linear repetivity, which is adapted to our setup with squarish boxes.
A point set $P\in \mathcal{P}_r$ is almost linearly repetitive w.r.t.~$\mathbb R^\mathsf{d}$, iff for every $\varepsilon>0$ there exists $K=K_\varepsilon>0$ and $U_0=U_0(\varepsilon)>0$ such that for every $U\ge U_0$, for every $B\in\mathcal{B}(KU)$ and for every $B_U\in\mathcal B(U)$, there is $y\in \mathbb R^\mathsf{d}$ such that $yB_U\subseteq B$ and $d_{yB_U}(yP, P)<\varepsilon$.

\begin{proof}

i). For fixed $P\in \mathcal{P}$, choose arbitary $\varepsilon>0$ and $U\ge U_1$ in the definition of the weight function.
Take $W\ge U$, choose arbitrary $B\in\mathcal{B}(W)$
and a box decomposition $(B_i)_{i \in I}$ of $B$ with boxes $B_i\in\mathcal{B}(U)$. 
Using almost subadditivity, we get
\begin{displaymath}
\frac{w_P(B)}{\mathrm{vol}(B)}\le\sum_{i\in I} \frac{w_P(B_i)}{\mathrm{vol}(B_i)}
\frac{\mathrm{vol}(B_i)}{\mathrm{vol}(B)}+\varepsilon\le f_P^+(U)+\varepsilon.
\end{displaymath}
 Since $B\in\mathcal{B}(W)$ was arbitrary, we infer $f_P^+(W)\le f_P^+(U)+\varepsilon$. Hence $f_P^+(U)$ is almost monotonically decreasing in $U$.
Now boundedness implies that $f_P^+(U)$ converges to a finite limit $f_P$ as $U\to\infty$, due to Lemma~\ref{almostmono}.

ii). Fix $P\in\mathcal{P}$ and $\varepsilon>0$. Choose $U_2>0$, $U_3>0$ and $\delta>0$ as in the definition of the weight function. 
For $U\ge U_1:=\max\{U_2,U_3\}$, take arbitrary $B\in\mathcal{B}(U)$. By definition 
of $\mathcal{P}$, there exists $x=x(P,B)\in\mathbb R^\mathsf{d}$ such that $d_B(x\widehat P, P)<\delta$. Hence  almost covariance and almost invariance yield the estimate
\begin{displaymath}
\Big| \frac{w(B,P)}{\mathrm{vol}(B)}-\frac{w(x^{-1}B,\widehat P)}{\mathrm{vol}(B)}\Big|
\le \Big| \frac{w(B,P)}{\mathrm{vol}(B)}-\frac{w(B,x\widehat P)}{\mathrm{vol}(B)}\Big|+\varepsilon
\le 2\varepsilon.
\end{displaymath}
By minimality, compare Theorem~\ref{charmin} ii), an analogous statement holds with $P$ und $\widehat P$ interchanged.
As $B\in \mathcal B(U)$ was arbitrary, we conclude that
\begin{displaymath}
\left|f_{\widehat P}^+(U)-f_P^+(U)\right|\le 2\varepsilon, \qquad \left|f_{\widehat P}^-(U)-f_P^-(U)\right|\le 2\varepsilon.
\end{displaymath}
The claim now follows together with i).

iii). 
We show the statement for $\widehat P$ first. Then the claim follows for all $P\in\mathcal{P}$ from ii).
The inequality $\limsup_{U\to\infty}f_{\widehat P}^-(U)\le \lim_{U\to\infty}f_{\widehat P}^+(U)$ holds trivially.
Assume that $\liminf_{U\to\infty}f_{\widehat P}^-(U)< \lim_{U\to\infty}f_{\widehat P}^+(U)=f$. Then there
are $\varepsilon>0$ and $B_{U_k}\in\mathcal B(U_k)$ for $k\in \mathbb N$, such that
$U_k\to\infty$ and
\begin{equation}\label{form:ass1}
\frac{w_{\widehat P}(B_{U_k})}{\mathrm{vol}(B_{U_k})}\le f-\varepsilon.
\end{equation}
Due to almost covariance we may choose $k_2\in\mathbb N$ such that 
for all $k\ge k_2$ and all $x\in\mathbb R^\mathsf{d}$ we have
\begin{displaymath}
|w_{\widehat P}(B_{U_k})-w_{x\widehat P}(xB_{U_k})|\le\frac{1}{2^{2\mathsf{d}+3}}\,\varepsilon\, \mathrm{vol}(B_{U_k}).
\end{displaymath}
Due to almost invariance we may choose $\delta>0$ and $k_3\in\mathbb N$ such that for every $k\ge k_3$ and for all $P\in\mathcal{P}$ the estimate
\begin{displaymath}
|w_{P}(B_{U_k})-w_{\widehat P}(B_{U_k})|\le \frac{1}{2^{2\mathsf{d}+3}}\,\varepsilon\, \mathrm{vol}(B_{U_k}) 
\end{displaymath}
holds, whenever $d_{B_{U_k}}(P, \widehat P)<\delta$.
Choose a constant $K_\delta$ of almost linear repetitivity and a corresponding $k_0=k_0(\delta)\in\mathbb N$.
Due to almost subadditivity we may choose $k_1\in\mathbb N$ such that for all $k\ge k_1$ and
 for every decomposition of a box $B$ in boxes $B_i\in\mathcal B(U_k)$ we have
\begin{displaymath}
w_P(B)-\sum_{i} w_P(B_i)\le \frac{1}{4}\frac{\varepsilon}{(6K_\delta)^\mathsf{d}}\,\mathrm{vol}(B).
\end{displaymath}
Now fix $k\ge k_4:=\max\{k_0,k_1,k_2,k_3\}$ and take arbitrary $B\in\mathcal{B}(3K_\delta U_k)$.
By partitioning each side of $B$ into $3$ parts of equal length, $B$ can 
be decomposed into $3^\mathsf{d}$
equivalent smaller boxes, each belonging to $\mathcal{B}(K_\delta U_k)$. Denote by
$B^{(i)}\in\mathcal{B}(K_\delta U_k)$ the box which does not touch the topological boundary of $B$. 
By linear almost repetitivity, there exists $y\in \mathbb R^\mathsf{d}$ such that $B_0=yB_{U_k}\subseteq B^{(i)}$ and  $d_{B_0}(y\widehat P, \widehat P)<\delta$. Then almost covariance and almost invariance yield the estimate
\begin{equation}\label{al-inv}
\begin{split}
|w_{\widehat P}(B_0)-&w_{\widehat P}(B_{U_k})|\le
|w_{y^{-1} \widehat P}(B_{U_k})-w_{\widehat P}(B_{U_k})|+\frac{1}{2^{2\mathsf{d}+3}}\,\varepsilon\, \mathrm{vol}(B_{U_k})\\
&\le\frac{1}{2^{2\mathsf{d}+3}}\,\varepsilon\, \mathrm{vol}(B_{U_k})+\frac{1}{2^{2\mathsf{d}+3}}\,\varepsilon\, \mathrm{vol}(B_{U_k})
= \frac{1}{2^{2\mathsf{d}+2}}\,\varepsilon\, \mathrm{vol}(B_0),
\end{split}
\end{equation}
since $d_{B_{U_k}}(y^{-1}\widehat P,\widehat P)=d_{B_0}(\widehat P,y\widehat P)<\delta$.
Using $B\in\mathcal{B}(3K_\delta U_k)$ and $B_0\in\mathcal{B}(U_k)$, we 
may estimate
\begin{equation}\label{eqn:est1}
\frac{1}{(6 K_\delta)^\mathsf{d}}= \frac{U_k^\mathsf{d}}{(2\cdot 3 K_\delta U_k)^\mathsf{d}}\le
 \frac{\mathrm{vol}(B_0)}{\mathrm{vol}(B)}\le \frac{(2\cdot U_k)^\mathsf{d}}{(3 K_\delta U_k)^\mathsf{d}}=
\frac{1}{\left(\frac{3}{2} K_\delta\right)^\mathsf{d}}.
\end{equation}
By construction, we may choose a box decomposition $(B_i)_{i=0}^n$ of $B$, with 
$B_i\in\mathcal{B}(U_k)$ for $i\in\{1,\ldots,n\}$.
Now almost subadditivity, the estimates \eqref{al-inv}, \eqref{form:ass1}, \eqref{eqn:est1} and i) yield
\begin{displaymath}
\begin{split}
\frac{w_{\widehat P}(B)}{\mathrm{vol}(B)}&\le 
\sum_{i=1}^n\frac{w_{\widehat P}(B_i)}{\mathrm{vol}(B)}
+\frac{w_{\widehat P}(B_0)}{\mathrm{vol}(B)}+\frac{1}{4}\frac{\varepsilon}{(6K_\delta)^\mathsf{d}}\\
&\le\sum_{i=1}^n\frac{w_{\widehat P}(B_i)}{\mathrm{vol}(B)}
+\frac{w_{\widehat P}(B_{U_k})}{\mathrm{vol}(B)}+
\frac{1}{2^{2\mathsf{d}+2}} \,\varepsilon\,\frac{\mathrm{vol}(B_0)}{\mathrm{vol}(B)}+\frac{1}{4}\frac{\varepsilon}{(6K_\delta)^\mathsf{d}}\\
&\le
\sum_{i=1}^nf_{\widehat P}^+(U_k)\frac{\mathrm{vol}(B_i)}{\mathrm{vol}(B)}
+\left(f-\varepsilon\right)\frac{\mathrm{vol}(B_0)}{\mathrm{vol}(B)}
+\frac{1}{2}\frac{\varepsilon}{(6 K_\delta)^\mathsf{d}}\\
&\le
f_{\widehat P}^+(U_k)+\left(f-f_{\widehat P}^+(U_k)\right)\frac{\mathrm{vol}(B_0)}{\mathrm{vol}(B)}
-\frac{1}{2}\frac{\varepsilon}{(6 K_\delta)^\mathsf{d}}\\
&\le 
f_{\widehat P}^+(U_k)-\frac{1}{2}\frac{\varepsilon}{(6 K_\delta)^\mathsf{d}}.
\end{split}
\end{displaymath}
Since $B\in\mathcal{B}(3 K_\delta U_k)$ was arbitrary, this implies
$f_{\widehat P}^+(3K_\delta U_k)\le f_{\widehat P}^+(U_k)-\varepsilon/2(6 K_\delta)^\mathsf{d}$. As $k\ge k_4$
was arbitrary, we may take the limit $k\to\infty$ and arrive at a contradiction.

Now let $(B_n)_{n\in\mathbb N}$ be a sequence in $\mathcal B$ such that
$\omega(B_n)\to\infty$ as $n\to\infty$. Since $B_n\in \mathcal B(U_n)$ for some $U_n$, we have the estimate
\begin{displaymath}
f_{\widehat P}^-(U_n)\le\frac{w_{\widehat P}(B_n)}{\mathrm{vol}(B_n)}\le f_{\widehat P}^+(U_n).
\end{displaymath}
Since $U_n\to\infty$ as $n\to\infty$, this yields the claimed uniform convergence. 
\end{proof}

We apply the previous proposition to the above example.

\begin{proposition} \label{thm:linarep-uniqerg}
Let $\widehat P\in\mathcal{P}_r$ be almost linearly repetitive w.r.t.~$\mathbb R^\mathsf{d}$. Then $X_{\widehat P}$ is uniquely ergodic
w.r.t.~$T=\mathbb R^\mathsf{d}$.
\end{proposition}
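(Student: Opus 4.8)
The plan is to deduce unique ergodicity from the abstract machinery of weight functions, bypassing pattern frequencies entirely. Recall that $(X_{\widehat P},\mathbb R^\mathsf{d})$ is uniquely ergodic precisely when it carries exactly one $\mathbb R^\mathsf{d}$-invariant Borel probability measure. Existence is immediate: $X_{\widehat P}$ is a closed subset of the compact space $\mathcal{P}_r$, hence compact, and since $\mathbb R^\mathsf{d}$ is amenable a Krylov--Bogolyubov argument produces at least one invariant probability measure. So the remaining task is uniqueness, and for this it suffices to show that $\int_{X_{\widehat P}} f\,\mathrm{d}\mu$ takes a value independent of the invariant measure $\mu$ for every $f\in C(X_{\widehat P})$.

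First I would fix $f\in C(X_{\widehat P})$ and apply Lemma~\ref{weight-function} with $\mathcal P=X_{\widehat P}$: the function $(B,P)\mapsto w_P(B):=\int_B \mathrm{d}x\, f(x^{-1}P)$ is a weight function on $\mathcal B\times X_{\widehat P}$ w.r.t.~$d_B$, with $C_P=\|f\|_\infty$. Since $\widehat P$ is almost linearly repetitive w.r.t.~$\mathbb R^\mathsf{d}$ by hypothesis, Proposition~\ref{mainlemma} iii) applies and yields a constant $c_f\in\mathbb R$, independent of $P$, such that for every sequence $(B_n)_{n\in\mathbb N}$ of squarish boxes with $\omega(B_n)\to\infty$ and every $P\in X_{\widehat P}$ one has $\mathrm{vol}(B_n)^{-1}\int_{B_n}\mathrm{d}x\,f(x^{-1}P)\to c_f$; concretely one may take $B_n=[0,n]^\mathsf{d}\in\mathcal B(n)$. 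By boundedness these averages are uniformly bounded by $\|f\|_\infty$.

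Next, let $\mu$ be any $\mathbb R^\mathsf{d}$-invariant Borel probability measure on $X_{\widehat P}$. For each $x\in\mathbb R^\mathsf{d}$, invariance gives $\int_{X_{\widehat P}}f(x^{-1}P)\,\mathrm{d}\mu(P)=\int_{X_{\widehat P}}f\,\mathrm{d}\mu$; averaging this identity over $x\in B_n$ and exchanging the order of integration — the map $(x,P)\mapsto f(x^{-1}P)$ is continuous, hence bounded and jointly measurable on $B_n\times X_{\widehat P}$, so Fubini applies — gives
\[
\int_{X_{\widehat P}}f\,\mathrm{d}\mu=\int_{X_{\widehat P}}\frac{1}{\mathrm{vol}(B_n)}\int_{B_n}\mathrm{d}x\,f(x^{-1}P)\,\mathrm{d}\mu(P).
\]
Letting $n\to\infty$ and invoking dominated convergence with dominating constant $\|f\|_\infty$ and pointwise limit $c_f$ yields $\int_{X_{\widehat P}}f\,\mathrm{d}\mu=c_f$. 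As $c_f$ does not depend on $\mu$ and $f\in C(X_{\widehat P})$ was arbitrary, the Riesz representation theorem forces any two invariant probability measures to agree; together with existence this shows that $X_{\widehat P}$ is uniquely ergodic w.r.t.~$T=\mathbb R^\mathsf{d}$.

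The genuine content has already been absorbed into Proposition~\ref{mainlemma}, so no delicate estimates remain here. The only points requiring care are verifying that the hypotheses of Lemma~\ref{weight-function} and Proposition~\ref{mainlemma} iii) hold (both do verbatim under almost linear repetitivity) and observing that \emph{pointwise} convergence of the box averages to a $P$-independent constant — rather than the a priori stronger uniform convergence — already suffices to pin down the invariant measure, once combined with invariance of $\mu$ and dominated convergence. This is the step I would flag as the conceptual crux of the argument.
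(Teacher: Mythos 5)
Your proof is correct and takes essentially the same route as the paper: both arguments rest entirely on Lemma~\ref{weight-function} and Proposition~\ref{mainlemma}~iii) to get convergence of the volume averages $\mathrm{vol}(B_n)^{-1}\int_{B_n}f(x^{-1}P)\,\mathrm{d}x$ to a $P$-independent constant along squarish boxes. The only difference is that where the paper invokes the uniform ergodic theorem \cite[Thm~2.16]{MR} to pass from this convergence to unique ergodicity, you write out that standard reduction directly (Krylov--Bogolyubov for existence, then Fubini, dominated convergence and Riesz for uniqueness), correctly observing that pointwise convergence to a $P$-independent limit already suffices for this direction.
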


\begin{remark}
Restricting to the FLC case, we recover the result that linear repetitivity implies unique ergodicity, see \cite[Thm~6.1]{LaPl03}, \cite[Thm~2.7]{LeMo02} and \cite[Cor~4.6]{DaLe01}. 
\end{remark}

\begin{proof}

Unique ergodicity of $X_{\widehat P}$ w.r.t.~$T=\mathbb R^\mathsf{d}$ means that there is exactly one $\mathbb R^\mathsf{d}$-invariant Borel probability measure on $X_{\widehat P}$. Unique ergodicity holds if the volume averages
\begin{displaymath}
J_n(f,P):=\frac{1}{\mathrm{vol}(B_n)}\int_{B_n}\mathrm{d}x f(x^{-1}P),
\end{displaymath}
with $(B_n)_{n\in\mathbb N}$ the sequence of hypercubes in $\mathbb R^\mathsf{d}$ of sidelength $2n$ centered at the origin, converge for all $f\in C(X_{\widehat P})$ and all $P\in X_{\widehat P}$ as $n\to\infty$, with a limit which does not depend on the choice of $P$. This follows from the uniform ergodic theorem, see e.g.~\cite[Thm~2.16]{MR}, by inversion invariance of the Lebesgue measure and inversion invariance of the centered hypercube.
But the latter condition is indeed satisfied, due to Lemma~\ref{weight-function} and Proposition~\ref{mainlemma} iii).
\end{proof}

Unique ergodicity actually holds w.r.t.~$T=\mathbb R^\mathsf{d}\rtimes H$, with $H$ any subgroup of $O(\mathsf{d})$. This can be shown by adapting the previous arguments.

\begin{lemma}\label{lem:wigwf1}
Let $H$ be any subgroup of $O(\mathsf d)$.
For $\widehat P\in \mathcal{P}_r$ consider $\mathcal{P}:=X_{\widehat P}$. Then for every $f\in C(\mathcal{P})$, the function
\begin{displaymath}
(B,P)\mapsto w_P(B):=\int_{B\times H}{\rm d}x f(x^{-1}P)
\end{displaymath}
is a weight function on $\mathcal B\times\mathcal{P}$ w.r.t.~$d_B$.
\end{lemma}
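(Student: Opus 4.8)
The plan is to verify the four weight-function axioms for
$w_P(B):=\int_{B\times H}\mathrm{d}x\, f(x^{-1}P)$ by mimicking the proof of Lemma~\ref{weight-function}, with the only genuinely new ingredient being the presence of the compact rotation factor $H$. Here $\mathrm{d}x$ denotes (the restriction to $B\times H$ of) a Haar measure on $\mathbb R^\mathsf{d}\rtimes H$; since $H$ is compact I normalise it to have total mass $1$, so that on the translation part we simply recover Lebesgue measure and in particular $\int_{B\times H}\mathrm{d}x = \mathrm{vol}(B)$. Boundedness (i) is then immediate with $C_P=\|f\|_\infty$, exactly as before. Almost covariance (iii) is again genuine covariance: for $x_0\in\mathbb R^\mathsf{d}$ we have $w_{x_0P}(x_0B)=\int_{x_0B\times H}\mathrm{d}x\, f(x^{-1}x_0P)$, and substituting $x\mapsto x_0x$ together with left-invariance of Haar measure on $\mathbb R^\mathsf{d}\rtimes H$ (note $x_0(B\times H)=(x_0B)\times H$ since $x_0$ is a pure translation) gives $w_P(B)$; so $U_2$ can be taken arbitrary and $\varepsilon$ plays no role. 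Almost subadditivity (ii) is likewise genuine additivity: if $(B_i)_{i\in I}$ is a finite box decomposition of $B$, then $(B_i\times H)_{i\in I}$ decomposes $B\times H$ up to a Haar-null set, and additivity of the integral gives $w_P(B)=\sum_i w_P(B_i)$, so $U_1$ is again arbitrary.

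The only axiom requiring work is almost invariance (iv), and this is where the argument of Lemma~\ref{weight-function}(iv) has to be adapted. Fix $\varepsilon>0$; by compactness of $\mathcal{P}=X_{\widehat P}$, $f$ is uniformly continuous for $d_{LR}$, so pick $\delta\in\,]0,1/\sqrt 2[$ with $|f(P)-f(P')|<\varepsilon$ whenever $d_{LR}(P,P')<\delta$. Suppose $B_U\in\mathcal B(U)$ and $P,P'\in\mathcal P$ with $d_{B_U}(P,P')<\delta$. The key geometric point is: for which $(x,h)\in B_U\times H$ is $d_{LR}((x,h)^{-1}P,(x,h)^{-1}P')<\delta$? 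Writing $(x,h)^{-1}=(-h^{-1}x,h^{-1})$, a point of $(x,h)^{-1}P$ lies in $B_{1/\delta}$ iff the corresponding point of $P$ lies in $(x,h)B_{1/\delta}=x+hB_{1/\delta}=x+B_{1/\delta}$ (since $B_{1/\delta}$ is a ball centred at the reference point, $hB_{1/\delta}=B_{1/\delta}$ for every $h\in O(\mathsf d)$), i.e.\ in $B_{1/\delta}(x)$. So the rotation part drops out of this containment, and exactly as in the original proof we may restrict to $x\in\widetilde B_U:=B_U\setminus\partial^{B_{1/\delta}}B_U$, for which $B_{1/\delta}(x)\subseteq B_U$, hence $P'\cap B_{1/\delta}(x)\subseteq P'\cap B_U\subseteq(P)_\delta$ and, transporting by the $d_{LR}$-isometry $(x,h)^{-1}$, $d_{LR}((x,h)^{-1}P,(x,h)^{-1}P')<\delta$ (the analogous inclusion handling the symmetric condition). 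Using the van Hove property of squarish boxes, choose $U_3$ so that $\mathrm{vol}(\partial^{B_{1/\delta}}B_U)\le(\varepsilon/\|f\|_\infty)\mathrm{vol}(B_U)$ for all $U\ge U_3$; then split $w_P(B_U)-w_{P'}(B_U)=\int_{B_U\times H}(f((x,h)^{-1}P)-f((x,h)^{-1}P'))\,\mathrm{d}x$ into the integral over $\widetilde B_U\times H$, which is bounded by $2\varepsilon\,\mathrm{vol}(B_U)$ by the Fubini estimate over $H$ (mass $1$) times the $\varepsilon$-bound on the integrand, plus the integral over $\partial^{B_{1/\delta}}B_U\times H$, bounded by $2\|f\|_\infty\,\mathrm{vol}(\partial^{B_{1/\delta}}B_U)\le 2\varepsilon\,\mathrm{vol}(B_U)$. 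This gives $|w_P(B_U)-w_{P'}(B_U)|\le 4\varepsilon\,\mathrm{vol}(B_U)$; since $\varepsilon$ was arbitrary, almost invariance follows, with $\delta$ a constant of uniform continuity of $f$.

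I expect no serious obstacle here: the structure is entirely parallel to Lemma~\ref{weight-function}, and the one thing one must notice is that $hB_{1/\delta}=B_{1/\delta}$ for rotations about the reference point, so that the rotational coordinate $h$ never enters the set containments that drive the van Hove estimate and can simply be integrated out harmlessly (it contributes a factor $1$ to every volume). If one instead uses an unnormalised Haar measure on $H$, every "$\mathrm{vol}(B)$" above is replaced by $|H|\cdot\mathrm{vol}(B)$ with $|H|$ the (finite) total Haar mass, which changes nothing. Thus $w$ is a weight function on $\mathcal B\times\mathcal P$ with respect to $d_B$, as claimed.
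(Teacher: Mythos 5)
Your proposal is correct and follows essentially the same route as the paper's (sketched) proof: boundedness, additivity and covariance are immediate, and almost invariance is obtained by replacing the van Hove boundary $\partial^{B_{1/\delta}}B_U$ with $\bigl(\partial^{B_{1/\delta}}B_U\bigr)\times H$ and observing that the rotation coordinate drops out of the containment $xB_{1/\delta}\subseteq B_U$ because $hB_{1/\delta}=B_{1/\delta}$ for $h\in O(\mathsf{d})$, together with $E(\mathsf{d})$-invariance of the metric. The only cosmetic difference is your normalisation of the Haar measure on $H$ and a slightly larger final constant ($4\varepsilon$ rather than $3\varepsilon$), neither of which affects the argument.
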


\begin{proof}[Sketch of proof]
Boundedness, additivity and covariance are clear. The proof of almost invariance 
is analogous to that of Lemma~\ref{weight-function}. We describe the modifications. We consider integrals 
over $B_U\times H$ instead of integrals over $B_U$. As van Hove boundary we use
\begin{displaymath}
\partial^{B_{1/\delta}\times\{e\}}\left(B_U\times H\right)=\left(\partial^{B_{1/\delta}} B_U\right)\times H
\end{displaymath}
instead of $\partial^{B_{1/\delta}} B_U$, with $e$ the identity in $O(\mathsf{d})$. This boundary term is of asymptotically small volume $o(\mathrm{vol}(B_U))$ as $U\to\infty$, since the same is true of $\partial^{B_{1/\delta}} B_U$.
A calculation shows that $x\in (B_U\times H)\setminus \partial^{B_{1/\delta}\times\{e\}}(B_U\times H)$ still implies $xB_{1/\delta}\subseteq B_U$, by inversion and rotation invariance of $B_{1/\delta}$ and by the remark before the proof of Lemma~\ref{weight-function}. Almost invariance follows then with $\mathbb R^\mathsf{d}\rtimes H$-invariance of the Euclidean metric.
\end{proof}

\begin{theorem}\label{thm:alruni}
Let $\widehat{P}\in \mathcal{P}_r$ be almost linearly repetitive w.r.t.~$\mathbb R^\mathsf{d}$. Then $X_{\widehat P}$ is uniquely ergodic w.r.t.~$T=\mathbb R^\mathsf{d}\rtimes H$, where $H$ is any subgroup of $O(\mathsf{d})$.
\end{theorem}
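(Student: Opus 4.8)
The plan is to run the proof of Proposition~\ref{thm:linarep-uniqerg} almost verbatim, with the weight function of Lemma~\ref{weight-function} replaced by the one of Lemma~\ref{lem:wigwf1} and the averaging group $\mathbb R^\mathsf{d}$ replaced by $T=\mathbb R^\mathsf{d}\rtimes H$. As in all examples of interest I take $H$ to be a compact (equivalently, closed) subgroup of $O(\mathsf{d})$, so that $T$ is locally compact and unimodular; write $\mu_T=\lambda\otimes\mu_H$ for a Haar measure on $T$, where $\lambda$ is Lebesgue measure on $\mathbb R^\mathsf{d}$ and $\mu_H$ is the Haar \emph{probability} measure on $H$. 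By the uniform ergodic theorem \cite[Thm~2.16]{MR}, together with unimodularity of $T$ and inversion invariance of the averaging sets chosen below, unique ergodicity of $(X_{\widehat P},T)$ follows once the averages $\mu_T(F_n)^{-1}\int_{F_n}f(x^{-1}P)\,\mathrm{d}\mu_T(x)$ converge, along a van Hove sequence $(F_n)_{n\in\mathbb N}$ in $T$, for every $f\in C(X_{\widehat P})$ and every $P\in X_{\widehat P}$, to a limit independent of $P$.

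The one point that is genuinely new compared with the translation case is the choice of $(F_n)$. Products of centred hypercubes with $H$ need not be van Hove in $T$: for $K=\{0\}\times H$ one has $K(B\times H)=\big(\bigcup_{h\in H}hB\big)\times H$, the rotational hull of $B$ times $H$, whose volume for a cube $B$ is strictly larger than that of $B\times H$, so the van Hove ratio does not vanish. I would therefore take $F_n:=\overline{B}_n(0)\times H$, with $\overline{B}_n(0)$ the closed Euclidean ball of radius $n$ about the origin. Since every rotation fixes $\overline{B}_n(0)$, a short computation in $T$ — in the spirit of the remark preceding the proof of Lemma~\ref{weight-function} — gives $KF_n\subseteq\overline{B}_{n+r}(0)\times H$ and $KF_n^{\,c}\subseteq\overline{B}_{n-r}(0)^{\,c}\times H$ whenever $K$ has $\mathbb R^\mathsf{d}$-projection inside $\overline{B}_r(0)$, so $\mu_T(\partial^KF_n)/\mu_T(F_n)=O(1/n)\to0$; hence $(F_n)$ is van Hove, and moreover $F_n^{-1}=F_n$.

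Next I would identify this average with a box density. By the very definition of $w_P$ and $\mu_H(H)=1$, it equals $\lambda(\overline{B}_n(0))^{-1}w_P(\overline{B}_n(0))$, where $w_P(A):=\int_{A\times H}f(x^{-1}P)\,\mathrm{d}x$ is the expression of Lemma~\ref{lem:wigwf1}, here evaluated at the general measurable set $A=\overline{B}_n(0)$; note that $w_P$ is genuinely (finitely) additive in $A$. Fix $U>0$ and, for $n$ large, exhaust $\overline{B}_n(0)$, up to a boundary shell of width $U\sqrt{\mathsf{d}}$ and hence of relative volume $O(U/n)$, by those axis-parallel side-$U$ grid cubes $Q_j\in\mathcal B(U)$ lying inside $\overline{B}_n(0)$. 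Additivity gives $w_P(\bigcup_jQ_j)=\sum_jw_P(Q_j)$, so $w_P(\bigcup_jQ_j)/\lambda(\bigcup_jQ_j)\in[f_P^-(U),f_P^+(U)]$; since $\widehat P$ is almost linearly repetitive w.r.t.\ $\mathbb R^\mathsf{d}$, Proposition~\ref{mainlemma} iii) — whose hypotheses only concern the $\mathbb R^\mathsf{d}$-action and so are met — gives $f_P^\pm(U)\to f$ as $U\to\infty$ with $f$ independent of $P$. Letting $n\to\infty$ and then $U\to\infty$, and controlling the shell via $|w_P(\overline{B}_n(0))-w_P(\bigcup_jQ_j)|\le||f||_\infty\,\lambda(\overline{B}_n(0)\setminus\bigcup_jQ_j)$, yields $\lambda(\overline{B}_n(0))^{-1}w_P(\overline{B}_n(0))\to f$ for every $P$, as required.

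I do not expect a deep obstacle: the combinatorial heart, Proposition~\ref{mainlemma}, is completely insensitive to $H$, and Lemma~\ref{lem:wigwf1} has already done the work of turning $\int_{B\times H}f(x^{-1}P)\,\mathrm{d}x$ into a weight function. The single step that needs care is the one flagged above — that in the non-abelian group $\mathbb R^\mathsf{d}\rtimes H$ one cannot average over products of boxes with $H$ but must use balls — together with the routine box-to-ball comparison that reduces the resulting ball densities to the box densities governed by Proposition~\ref{mainlemma} iii). Everything else parallels the proof of Proposition~\ref{thm:linarep-uniqerg}.
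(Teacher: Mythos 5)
Your proof is correct and follows essentially the same route as the paper: both replace the hypercubes by the rotation--invariant van Hove sets $\overline{B}_n(0)\times H$, invoke the weight function of Lemma~\ref{lem:wigwf1}, decompose the ball into squarish boxes up to a boundary shell of vanishing relative volume, and conclude via Proposition~\ref{mainlemma}~iii). The only immaterial differences are that the paper couples the box size to $n$ (using boxes in $\mathcal B(\sqrt{n})$) instead of taking iterated limits in $n$ and $U$, and that it does not spell out why cube--based averaging sets would fail to be van Hove.
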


\begin{proof}
The proof is analogous to that of Proposition~\ref{thm:linarep-uniqerg}. Let $D_s$ denote the closed ball of radius $s$ centered at the origin in $\mathbb R^\mathsf{d}$. Then a calculation shows that $(D_n\times H)_{n\in\mathbb N}$ is a van Hove sequence in $\mathbb R^\mathsf{d}\rtimes H$, i.e., $\mathrm{vol}(\partial^K(D_n\times H))/\mathrm{vol}(D_n\times H)\to0$ as $n\to\infty$, for every compact $K\subset \mathbb R^\mathsf{d}\rtimes H$. This holds since $(D_n)_{n\in\mathbb N}$ has the van Hove property in $\mathbb R^\mathsf{d}$, and since the centered balls are inversion invariant. Hence we may study the convergence of
\begin{displaymath}
J(f,P,D_n):=\frac{1}{\mathrm{vol}(D_n\times H)}\int_{D_n\times H}\mathrm{d}x f(x^{-1}P)
\end{displaymath}
in order to check unique ergodicity w.r.t.~$T=\mathbb R^\mathsf{d}\rtimes H$. But this can be done by replacing $D_n$ by a squarish box. To see this, note first that
\begin{equation}\label{eq:svH}
\frac{\mathrm{vol}(\partial^{D_{\sqrt{n}}}D_n)}{\mathrm{vol}(D_n)}\longrightarrow 0\qquad (n\to\infty).
\end{equation}
For every $n$, choose a finite decomposition of $D_n$ into squarish boxes $B_i^{(n)}\in\mathcal B(\sqrt{n})$ and a set $B^{(n)}$ which does
not contain any box in $\mathcal B(\sqrt{n})$, such that
\begin{displaymath}
D_n=B^{(n)} \cup \bigcup_{i} B_{i}^{(n)}, \qquad B_i^{(n)}\in\mathcal B(\sqrt{n}), \qquad
B^{(n)}\subseteq \partial^{D_{\sqrt{n}}}D_n.
\end{displaymath}
Now fix arbitrary $\varepsilon>0$. By Proposition~\ref{mainlemma}, Lemma~\ref{lem:wigwf1} and \eqref{eq:svH}, there exist $J(f)\in\mathbb R$ and $n_0=n_0(\varepsilon)$ such that for every $n\ge n_0$, for all $B_i^{(n)}, B^{(n)}$ and for all $P\in X_{\widehat P}$ we have
\begin{displaymath}
\left|J(f,P,B_i^{(n)})-J(f)\right|<\varepsilon, \qquad 
\frac{\mathrm{vol}(B^{(n)})}{\mathrm{vol}(D_n)}<\varepsilon.
\end{displaymath}
Then we have for every $n\ge n_0$ that
\begin{displaymath}
\begin{split}
|J(f,P,&D_n)-J(f)|=|J\big(f,P,B^{(n)}\cup \bigcup_{i} B_{i}^{(n)}\big)-J(f)|\\
&=\left|\frac{1}{\mathrm{vol}(D_n\times H)}\left(w_P(B^{(n)})+\sum_i w_P(B_i^{(n)})\right)-J(f)\right|\\
&\le \left|\sum_i J(f,P,B_i^{(n)})\frac{\mathrm{vol}(B_i^{(n)})}{\mathrm{vol}(D_n)}-J(f)\right|+||f||_\infty\varepsilon\\
&\le \sum_i \left|J(f,P,B_i^{(n)})-J(f)\right|\frac{\mathrm{vol}(B_i^{(n)})}{\mathrm{vol}(D_n)}+(|J(f)|+||f||_\infty)\varepsilon\\
&\le (1+|J(f)|+||f||_\infty)\varepsilon,
\end{split}
\end{displaymath}
where we used the factorisation property $\mathrm{vol}(D_n\times H)=\mathrm{vol}(D_n)\cdot\mathrm{vol}(H)$.
As $\varepsilon>0$ was arbitrary, we conclude $\lim_{n\to\infty} J(f,P,D_n)=J(f)$ for all $P\in X_{\widehat P}$. This proves the claim.
\end{proof}

\section{Almost Linear Wiggle-Repetitivity}  \label{sec:alm-wig-rep}

Motivated by substitution tilings with dense tile orientations, we finally discuss a point set version of linear wiggle--repetitivity. For a ball or a squarish box $B\subset\mathbb R^\mathsf{d}$, we denote by $r_B$ a rotation about  the center of $B$. For some fixed metric on $O(\mathsf{d})$ generating the topology of $O(\mathsf{d})$, the distance of $r_B$ to the identity is denoted by $d(r_B)$. In order to quantify the deviation of two uniformly discrete point sets $P,P'\subset \mathbb R^\mathsf{d}$ within $B$, we consider
\begin{displaymath}
\widetilde d_B(P,P'):=\inf\{\varepsilon>0\,|\,\exists r_B,r_B': \max\{d_B(r_BP,r_B'P'),
d(r_B),d(r_B')\}<\varepsilon \}.
\end{displaymath}
Given $\varepsilon\ge0$, we say that two patterns $P\sqcap B$ and $P'\sqcap B'$ are \emph{$\varepsilon$--wiggle--similar}, if there exists $t\in \mathbb R^\mathsf{d}$ such that $B'=tB$ and $\widetilde d_{tB}(tP,P')\le\varepsilon$.

\begin{definition} 
Let $P$ be a uniformly discrete subset of $\mathbb R^\mathsf{d}$. $P$ is called
\begin{itemize}
\item[i)] \emph{almost wiggle--repetitive},
if for every $r>0$ and for every $\varepsilon>0$ there exists $R=R(r,\varepsilon)>0$, such that every $R$--pattern in $P$ contains an $\varepsilon$--wiggle--similar copy of every $r$--pattern in $P$.
\item[ii)] \emph{almost linearly wiggle--repetitive}, if $P$ is almost wiggle--repetitive, and if one can choose $R(r,\varepsilon)=\mathcal O(r)$ as $r\to\infty$, where the $\mathcal O$--constant may depend on $\varepsilon$. 
\end{itemize}
\end{definition}

\begin{remark}\label{rem:alwr}
It is not hard to see that almost wiggle--repetitivity is equivalent to almost repetitivity w.r.t.~$\mathbb R^\mathsf{d}$. Examples of almost linearly wiggle--repetitive point sets are obtained from linearly wiggle--repetitive tilings of Section~\ref{chap:subs}. The pinwheel tiling point set in Figure~\ref{fig:pin} shows that almost linear wiggle--repetitivity does not imply almost linear repetitivity w.r.t.~$\mathbb R^\mathsf{d}$.  
\end{remark}

We want to show that almost linear wiggle--repetitivity implies unique ergodicity. This can be done by refining the arguments of the previous section.

\begin{lemma}\label{lem:wigwf2}
For $\widehat P\in \mathcal{P}_r$ consider $\mathcal{P}:=X_{\widehat P}$. Then for every $f\in C(\mathcal{P})$ the function
\begin{displaymath}
(B,P)\mapsto w_P(B):=\int_B{\rm d}x f(x^{-1}P)
\end{displaymath}
is a weight function on $\mathcal B\times \mathcal{P}$ w.r.t.~$\widetilde d_B$.
\end{lemma}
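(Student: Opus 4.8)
The plan is to verify the four weight-function axioms for $w_P(B)=\int_B f(x^{-1}P)\,\mathrm{d}x$, but now with respect to the rotation-allowing distance $\widetilde d_B$ rather than $d_B$. Three of the four properties — boundedness, (exact) subadditivity, and covariance — do not involve the distance at all, so they carry over verbatim from the proof of Lemma~\ref{weight-function}: boundedness holds with $C_P=\|f\|_\infty$ by the trivial estimate $|w_P(B)|\le\|f\|_\infty\mathrm{vol}(B)$; additivity $w_P(B)=\sum_i w_P(B_i)$ over a finite box decomposition follows from additivity of the Lebesgue integral; and covariance $w_{xP}(xB)=w_P(B)$ for $x\in\mathbb R^\mathsf{d}$ follows from translation invariance of Lebesgue measure. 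So the entire content of the lemma is in re-proving \emph{almost invariance} with $\widetilde d_B$ in place of $d_B$.

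For almost invariance, I would mimic the proof of Lemma~\ref{weight-function}, inserting the extra rotations. Fix $f\in C(\mathcal P)$, not identically zero, and $\varepsilon>0$; by uniform continuity of $f$ on the compact space $\mathcal P$, choose $\delta\in\,]0,1/\sqrt 2[$ so that $d_{LR}(P,P')<\delta$ implies $|f(P)-f(P')|<\varepsilon$. Suppose $\widetilde d_{B_U}(P,P')<\delta$ for $B_U\in\mathcal B(U)$; by definition there are rotations $r_{B_U},r_{B_U}'$ about the center of $B_U$ with $d_{B_U}(r_{B_U}P,r_{B_U}'P')<\delta$ and $d(r_{B_U}),d(r_{B_U}')<\delta$. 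Write $P_1=r_{B_U}P$, $P_1'=r_{B_U}'P'$ so $d_{B_U}(P_1,P_1')<\delta$. Since rotation about the center of $B_U$ fixes $B_U$, the change of variables $x\mapsto r_{B_U}x$ gives $\int_{B_U}f(x^{-1}P)\,\mathrm{d}x=\int_{B_U}f(x^{-1}P_1)\,\mathrm{d}x$ (using $\mathbb R^\mathsf{d}\rtimes O(\mathsf d)$-invariance of Lebesgue measure and the fact that $x^{-1}r_{B_U}$ ranges over the appropriate Euclidean motions), and similarly for $P'$ and $r_{B_U}'$. Hence $|w_P(B_U)-w_{P'}(B_U)|=|\int_{B_U}f(x^{-1}P_1)\,\mathrm{d}x-\int_{B_U}f(x^{-1}P_1')\,\mathrm{d}x|$, and the right-hand side is estimated exactly as in the proof of Lemma~\ref{weight-function}: one passes to the shrunk box $\widetilde B_U=B_U\setminus\partial^{B_{1/\delta}}B_U$ using the van Hove property of squarish boxes to absorb an $\varepsilon\,\mathrm{vol}(B_U)$ error; on $\widetilde B_U$ the inclusion $xB_{1/\delta}\subseteq B_U$ together with $d_{B_U}(P_1,P_1')<\delta$ forces $d_{LR}(x^{-1}P_1,x^{-1}P_1')<\delta$, hence the integrands differ by less than $\varepsilon$ pointwise; a $3\varepsilon$-argument then yields $|w_P(B_U)-w_{P'}(B_U)|\le 3\varepsilon\,\mathrm{vol}(B_U)$ for $U$ large, which gives almost invariance since $\varepsilon$ was arbitrary.

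The main obstacle — and the only place any genuine care is needed — is the bookkeeping in the change of variables with two \emph{different} rotation centers implicitly present. One must be careful that $r_{B_U}$ and $r_{B_U}'$ are rotations about the \emph{same} point (the center of $B_U$), so that both substitutions preserve $B_U$, and that after substituting we are comparing $x^{-1}P_1$ with $x^{-1}P_1'$ over the \emph{same} region $B_U$; this is exactly why $\widetilde d_B$ is defined with rotations about the center of $B$ rather than arbitrary rotations. Once this is set up correctly, everything reduces to the already-established argument, so I would simply write ``the proof is analogous to that of Lemma~\ref{weight-function}, where one additionally uses rotation invariance of the Euclidean metric and of centered balls to reduce $\widetilde d_{B_U}(P,P')<\delta$ to the case $d_{B_U}(P_1,P_1')<\delta$ treated there.''

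\begin{proof}[Sketch of proof]
Boundedness (with $C_P=\|f\|_\infty$), additivity and covariance hold exactly as in the proof of Lemma~\ref{weight-function}, since these properties do not refer to the distance on $\mathcal P_r$. For almost invariance, fix $f\in C(\mathcal P)$, w.l.o.g.~not identically zero, and $\varepsilon>0$, and take $\delta\in\,]0,1/\sqrt 2[$ of uniform continuity of $f$ as in that proof. Suppose $B_U\in\mathcal B(U)$ and $P,P'\in\mathcal P$ satisfy $\widetilde d_{B_U}(P,P')<\delta$. Then there are rotations $r_{B_U},r_{B_U}'$ about the center of $B_U$ with $d(r_{B_U})<\delta$, $d(r_{B_U}')<\delta$ and $d_{B_U}(r_{B_U}P,r_{B_U}'P')<\delta$. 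Since a rotation about the center of $B_U$ maps $B_U$ onto itself and Lebesgue measure is $\mathbb R^\mathsf{d}\rtimes O(\mathsf d)$-invariant, the substitutions $x\mapsto r_{B_U}x$ and $x\mapsto r_{B_U}'x$ give
\begin{displaymath}
w_P(B_U)=\int_{B_U}f(x^{-1}r_{B_U}P)\,\mathrm{d}x,\qquad
w_{P'}(B_U)=\int_{B_U}f(x^{-1}r_{B_U}'P')\,\mathrm{d}x.
\end{displaymath}
Writing $P_1:=r_{B_U}P$ and $P_1':=r_{B_U}'P'$, we have $d_{B_U}(P_1,P_1')<\delta$, so the right-hand sides are compared exactly as in the proof of Lemma~\ref{weight-function}: passing to $\widetilde B_U:=B_U\setminus\partial^{B_{1/\delta}}B_U$ via the van Hove property of squarish boxes absorbs an error $\varepsilon\,\mathrm{vol}(B_U)$ for $U$ large; for $x\in\widetilde B_U$ one has $xB_{1/\delta}\subseteq B_U$, whence $d_{LR}(x^{-1}P_1,x^{-1}P_1')<\delta$ and $|f(x^{-1}P_1)-f(x^{-1}P_1')|<\varepsilon$; a $3\varepsilon$-argument yields $|w_P(B_U)-w_{P'}(B_U)|\le 3\varepsilon\,\mathrm{vol}(B_U)$ for all $U\ge U_3$. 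As $\varepsilon>0$ was arbitrary, almost invariance w.r.t.~$\widetilde d_B$ follows.
\end{proof}
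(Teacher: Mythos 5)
Your overall strategy --- reduce almost invariance w.r.t.~$\widetilde d_B$ to the already--proved almost invariance w.r.t.~$d_B$ by first absorbing the two rotations, keeping boundedness, additivity and covariance verbatim --- is the same reduction the paper makes. But the step that does the absorbing is wrong. You assert that a rotation $r_{B_U}$ about the center of $B_U$ ``maps $B_U$ onto itself'' and hence that the substitution $x\mapsto r_{B_U}x$ yields the exact identity $w_P(B_U)=\int_{B_U}f(x^{-1}r_{B_U}P)\,\mathrm{d}x$. Neither claim holds: a box is not invariant under a generic rotation about its center (rotate a square by a small angle), and even on the common part of $B_U$ and $r_{B_U}^{-1}B_U$ the change of variables replaces the integrand $f(y^{-1}P)$ by $(f\circ r_0)(y^{-1}P)$, where $r_0$ is the linear part of $r_{B_U}$, since $(r_{B_U}y)^{-1}r_{B_U}P=r_0\cdot(y^{-1}P)$. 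So the two integrals you declare equal actually differ, and controlling that difference is precisely the nontrivial content of the lemma.

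The paper's proof is devoted to exactly these two error terms: it establishes
$|w_{r_BP}(B)-w_P(B)|\le \|f\circ r_B-f\|_\infty\,\mathrm{vol}(B)+\|f\|_\infty\,\mathrm{vol}\bigl((r_B^{-1}B)\Delta B\bigr)$,
and then shows (a) $\|f\circ r_0-f\|_\infty\to 0$ as $r_0\to e$, via uniform continuity of $(P,r_0)\mapsto f(r_0P)$ on the compact set $\mathcal{P}\times O(\mathsf{d})$, and (b) $\mathrm{vol}\bigl((r_B^{-1}B)\Delta B\bigr)/\mathrm{vol}(B)\to 0$ uniformly over squarish boxes as $d(r_B)\to 0$, by rescaling to $\mathcal B(1)$ and dominated convergence. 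Only after this does the triangle inequality, combined with the $d_B$--almost--invariance from Lemma~\ref{weight-function}, give the statement. Your proposal skips both estimates by asserting an exact invariance that is false, so as written it has a genuine gap; the reduction framework you set up is correct and would go through once the claimed identity is replaced by the inequality above and estimates (a) and (b) are supplied.
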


\begin{proof}
Boundedness, additivity and covariance of $w_P(B)$ hold by the same arguments as in the proof of Lemma~\ref{weight-function}. For almost invariance of $w_P(B)$ with respect to $\widetilde d_B$, it suffices to show:
\begin{quote}
For every $\varepsilon>0$ there exist $\delta>0$ and $U_0>0$ such that for all $U\ge U_0$ the following holds: If $B\in\mathcal{B}(U)$, a point set $P\in\mathcal{P}$ and a rotation $r_B$ are given such that $d(r_B)<\delta$, then $|w_{r_BP}(B)-w_P(B)|<\varepsilon\,\mathrm{vol}(B)$.
\end{quote}
Indeed, almost invariance then follows with the triangle inequality by a $3\varepsilon$--argument, together with almost invariance of $w_P(B)$ with respect to $d_B$, which is true by Lemma~\ref{weight-function}.

We obtain an estimate uniform in $P\in \mathcal{P}$ as
\begin{displaymath}
\begin{split}
|w_{r_BP}(B)&-w_P(B)|=\Big|\int_B{\rm d}x f(x^{-1}r_BP)-\int_B{\rm d}x f(x^{-1}P)\Big|\\
=&\,\Big|\int_{r_B^{-1}B}{\rm d}x f\circ r_B(x^{-1}P)-\int_B{\rm d}x f(x^{-1}P)\Big|\\
\le& \,\Big|\int_{r_B^{-1}B}{\rm d}x f\circ r_B(x^{-1}P)-\int_{r_B^{-1}B}{\rm d}x f(x^{-1}P)\Big|\\
&\qquad+\Big|\int_{r_B^{-1}B}{\rm d}x f(x^{-1}P)-\int_B{\rm d}x f(x^{-1}P)\Big|\\
\le& \, ||f\circ r_B-f||_\infty \cdot\mathrm{vol}(B)+||f||_\infty\cdot\mathrm{vol}((r_B^{-1}B)\Delta B).
\end{split}
\end{displaymath}
Here $\Delta$ denotes the symmetric difference. The second equation is a consequence of rotation invariance of the Lebesgue measure. The remaining inequalities rest on standard estimates and rotation invariance.

We restrict w.l.o.g.~to squarish boxes centered at the origin, which is possible due to $d(r_{B})=d(r_{tB})$ and $w_{r_BP}(B)=w_{r_{tB}tP}(tB)$ for $t\in\mathbb R^\mathsf{d}$ due to translation invariance of the Lebesgue measure. 
To analyse the first term, note that due to continuity of $f$, the map $(P,r_0)\mapsto f(r_0P)$ is uniformly continuous on the compact set $\mathcal{P}\times O(\mathsf{d})$, where we use the product topology. Noting that the maximum metric from the factors is compatible with the product topology, we conclude that the supremum norm $||f\circ r_0-f||_\infty$ gets arbitrarily small for $r_0\in O(\mathsf{d})$ sufficiently close to the identity.

For the second term, one may restrict analysis to $D\in\mathcal B(1)$. Indeed, if $B=\lambda D$ for some $\lambda>0$, we have
\begin{displaymath}
\mathrm{vol}((r_B^{-1}B)\Delta B) = \lambda^{\mathsf{d}} \mathrm{vol}((r_{D}^{-1}D)\Delta D)=\frac{\mathrm{vol}((r_{D}^{-1}D)\Delta D)}{\mathrm{vol}(D)}\mathrm{vol}(B).
\end{displaymath}
We have to show $\mathrm{vol}((r_{D}^{-1}D)\Delta D)\to0$ as $d(r_D)\to0$, uniformly in $D\in\mathcal B(1)$. Due to linearity of the volume in the coordinate directions, it suffices to consider the squarish box $D\in\mathcal B(1)$ of maximal side lengths $2$. The claim follows if $\mathrm{vol}((r_{D}D)\cap D)\to\mathrm{vol}(D)$ as $d(r_D)\to0$. But the latter can be shown using dominated convergence, by noting that $1_{r_DD}\to 1_D$ as $d(r_D)\to0$ pointwise on $(\partial D)^c$, together with $\mathrm{vol}(\partial D)=0$. Here $\partial A$ denotes the topological boundary of $A$.
\end{proof}

The proof of the following theorem is analogous to that of Proposition~\ref{thm:linarep-uniqerg}. In the present context, it rests on Lemma~\ref{lem:wigwf2} together with Proposition~\ref{mainlemma} iii), which remains valid for linearly wiggle--repetitive point sets and weight functions w.r.t.~$\widetilde d_B$.

\begin{theorem}\label{thm:alwruni}
Let $\widehat P\in \mathcal{P}_r$ be almost linearly wiggle--repetitive. Then $X_{\widehat P}$ is uniquely ergodic w.r.t.~$T=\mathbb R^{\mathsf d}$. \qed
\end{theorem}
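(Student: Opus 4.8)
The plan is to mimic the proof of Proposition~\ref{thm:linarep-uniqerg}, replacing the weight function of Lemma~\ref{weight-function} by the one of Lemma~\ref{lem:wigwf2} and the metric $d_B$ by the wiggle metric $\widetilde d_B$. Concretely, unique ergodicity of $X_{\widehat P}$ w.r.t.~$T=\mathbb R^\mathsf{d}$ holds once we know that the volume averages
\begin{displaymath}
J_n(f,P):=\frac{1}{\mathrm{vol}(B_n)}\int_{B_n}\mathrm{d}x\,f(x^{-1}P),
\end{displaymath}
with $(B_n)_{n\in\mathbb N}$ the centered hypercubes of side length $2n$, converge as $n\to\infty$ for every $f\in C(X_{\widehat P})$ and every $P\in X_{\widehat P}$, with a $P$-independent limit; this is exactly the hypothesis of the uniform ergodic theorem \cite[Thm~2.16]{MR}, using inversion invariance of both the Lebesgue measure and the centered hypercube.

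The key observation is that, by Remark~\ref{rem:alwr}, almost wiggle--repetitivity of $\widehat P$ is equivalent to almost repetitivity w.r.t.~$\mathbb R^\mathsf{d}$, so $X_{\widehat P}$ is minimal by Theorem~\ref{charmin}, and the weight function machinery of Section~4 applies. By Lemma~\ref{lem:wigwf2}, for each $f\in C(X_{\widehat P})$ the map $(B,P)\mapsto w_P(B)=\int_B\mathrm{d}x\,f(x^{-1}P)$ is a weight function on $\mathcal B\times X_{\widehat P}$ with respect to $\widetilde d_B$. The next point I would make explicit is that Proposition~\ref{mainlemma}~iii) remains valid verbatim when ``almost linearly repetitive'' is read in the wiggle sense and the weight function is taken w.r.t.~$\widetilde d_B$: indeed its proof only invokes the four weight-function axioms plus the squarish-box reformulation of almost linear repetitivity, and the latter has the obvious wiggle analogue (for every $\varepsilon>0$ there are $K_\varepsilon>0$, $U_0(\varepsilon)>0$ so that for all $U\ge U_0$, every $B\in\mathcal B(K_\varepsilon U)$ and every $B_U\in\mathcal B(U)$ there is $y\in\mathbb R^\mathsf{d}$ with $yB_U\subseteq B$ and $\widetilde d_{yB_U}(yP,P)<\varepsilon$), which is what is produced by the $y$-shift step in the proof of Proposition~\ref{mainlemma}~iii). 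Hence for every $f$ the densities $f_P^+(U)$ and $f_P^-(U)$ converge to a common limit $f$, independent of $P\in X_{\widehat P}$, and $w_P(B_n)/\mathrm{vol}(B_n)\to f$ for any sequence of squarish boxes with $\omega(B_n)\to\infty$, uniformly in the box centers. Since the centered hypercubes $B_n$ of side length $2n$ are squarish boxes with $\omega(B_n)=2n\to\infty$, this gives $J_n(f,P)\to f$ uniformly in $P$, which is the required convergence.

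I would then invoke the uniform ergodic theorem to conclude that there is a unique $\mathbb R^\mathsf{d}$-invariant Borel probability measure on $X_{\widehat P}$, i.e.\ unique ergodicity w.r.t.~$T=\mathbb R^\mathsf{d}$. Since the statement is flagged with \texttt{\textbackslash qed} in the excerpt, the intended proof is precisely this routine transcription, and the honest remark is that there is no new obstacle: the only thing requiring care is checking that none of the steps in the proof of Proposition~\ref{mainlemma}~iii) secretly used the specific metric $d_B$ rather than just the abstract weight-function axioms, and that the $y$-translate furnished by almost linear wiggle--repetitivity is still a pure translation (it is — only the $\widetilde d$-closeness tolerates an extra small rotation), so that the box decomposition arguments are unaffected. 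Everything else — boundedness, additivity, covariance of the present $w$, and the passage from squarish boxes to centered hypercubes — is identical to Proposition~\ref{thm:linarep-uniqerg}.
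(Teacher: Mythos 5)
Your proposal is correct and follows essentially the same route as the paper: the paper's proof is precisely the remark that the argument of Proposition~\ref{thm:linarep-uniqerg} carries over using the weight function of Lemma~\ref{lem:wigwf2} w.r.t.~$\widetilde d_B$, together with the observation that Proposition~\ref{mainlemma}~iii) remains valid for almost linearly wiggle--repetitive point sets. Your explicit check that the proof of Proposition~\ref{mainlemma}~iii) uses only the abstract weight-function axioms plus the squarish-box reformulation of (wiggle) repetitivity, with the $y$-shift still a pure translation, is exactly the verification the paper leaves implicit.
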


Unique ergodicity also holds  w.r.t.~the Euclidean group $T=E(\mathsf{d})$.

\begin{lemma}\label{lem:wigwf}
For $\widehat P\in \mathcal{P}_r$ consider $\mathcal{P}:=X_{\widehat P}$. Then for every $f\in C(\mathcal{P})$ the function
\begin{displaymath}
(B,P)\mapsto w_P(B):=\int_{B\times O(\mathsf{d})}{\rm d}x f(x^{-1}P)
\end{displaymath}
is a weight function on $\mathcal B\times \mathcal{P}$ w.r.t.~$\widetilde d_B$.
\end{lemma}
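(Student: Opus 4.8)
The plan is to follow the same route as the previous lemmas, combining the two independent sources of ``almost invariance'' — the translational part (as in Lemma~\ref{lem:wigwf1}, where one integrates over $B\times O(\mathsf{d})$ and controls a van Hove boundary term) and the rotational part (as in Lemma~\ref{lem:wigwf2}, where a small rotation $r_B$ of the box produces a small symmetric-difference error). First I would note that boundedness holds with $C_P=||f||_\infty\mathrm{vol}(O(\mathsf{d}))$, that additivity is immediate from the decomposition property of the Lebesgue measure on $B\times O(\mathsf{d})$, and that covariance $w_{xP}(xB)=w_P(B)$ for $x\in\mathbb R^\mathsf{d}$ follows from translation invariance of the Lebesgue measure, exactly as before.

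The substantive point is almost invariance with respect to $\widetilde d_B$. As in the proof of Lemma~\ref{lem:wigwf2}, it suffices to establish the auxiliary claim: for every $\varepsilon>0$ there are $\delta>0$ and $U_0>0$ so that for all $U\ge U_0$, all $B\in\mathcal{B}(U)$, all $P\in\mathcal{P}$ and all rotations $r_B$ with $d(r_B)<\delta$ one has $|w_{r_BP}(B)-w_P(B)|<\varepsilon\,\mathrm{vol}(B)$; the full statement then follows by a $3\varepsilon$-argument from the triangle inequality together with almost invariance of $w_P(B)$ with respect to $d_B$, which is precisely Lemma~\ref{lem:wigwf1}. To prove the auxiliary claim I would repeat the computation in Lemma~\ref{lem:wigwf2}, now with the inner integral running over $O(\mathsf{d})$: writing $w_{r_BP}(B)=\int_{B}\mathrm{d}x\int_{O(\mathsf{d})}\mathrm{d}\rho\, f(\rho^{-1}x^{-1}r_BP)$ and substituting, the difference $|w_{r_BP}(B)-w_P(B)|$ is bounded, after a change of variables using rotation and translation invariance of Haar/Lebesgue measure, by $\mathrm{vol}(O(\mathsf{d}))\,||f\circ r_0-f||_\infty\,\mathrm{vol}(B)+||f||_\infty\,\mathrm{vol}(O(\mathsf{d}))\,\mathrm{vol}((r_B^{-1}B)\Delta B)$ for a suitable $r_0\in O(\mathsf{d})$ with $d(r_0)<\delta$. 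The first term is handled by uniform continuity of $(P,r_0)\mapsto f(r_0P)$ on the compact space $\mathcal{P}\times O(\mathsf{d})$, exactly as in Lemma~\ref{lem:wigwf2}. The second term is handled by the scaling reduction $\mathrm{vol}((r_B^{-1}B)\Delta B)=\frac{\mathrm{vol}((r_D^{-1}D)\Delta D)}{\mathrm{vol}(D)}\mathrm{vol}(B)$ to unit-width boxes $D\in\mathcal{B}(1)$, together with $\mathrm{vol}((r_DD)\cap D)\to\mathrm{vol}(D)$ as $d(r_D)\to0$ by dominated convergence, reducing further to the worst-case box of side lengths all equal to $2$ by linearity of the volume in the coordinate directions; this is verbatim the argument in Lemma~\ref{lem:wigwf2}.

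I expect the only genuinely new bookkeeping — and hence the mildest obstacle — to be the interplay of the two ``$\varepsilon$'' splittings: one must first fix the van Hove scale $U_0$ controlling $\mathrm{vol}(\partial^{B_{1/\delta}\times\{e\}}(B_U\times H))/\mathrm{vol}(B_U\times H)$ as in Lemma~\ref{lem:wigwf1}, and simultaneously the rotation threshold $\delta$ controlling the symmetric-difference and supremum-norm errors above, then take $U_0$ large enough to absorb both. Since both error bounds are of the form (constant)$\cdot\varepsilon\cdot\mathrm{vol}(B)$ with constants depending only on $||f||_\infty$ and $\mathrm{vol}(O(\mathsf{d}))$ — both finite, $O(\mathsf{d})$ being compact — a straightforward rescaling of $\varepsilon$ finishes the argument. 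Thus the proof is obtained by assembling Lemma~\ref{lem:wigwf1} and Lemma~\ref{lem:wigwf2} with no essentially new idea, and I would phrase it as a short ``sketch of proof'' pointing to those two lemmas for the respective halves.
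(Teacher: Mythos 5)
Your proposal is correct and follows essentially the same route as the paper: boundedness, additivity and covariance as before, and almost invariance w.r.t.\ $\widetilde d_B$ obtained by combining the translational argument of Lemma~\ref{lem:wigwf1} with the rotational argument of Lemma~\ref{lem:wigwf2} via a $3\varepsilon$-splitting. The only (harmless) difference is that you retain a $\lVert f\circ r_0-f\rVert_\infty$ term: the paper's change of variables $x\mapsto r_B^{-1}x$ uses left invariance of the Haar measure on $E(\mathsf{d})$ and the identity $r_B^{-1}(B\times O(\mathsf{d}))=(r_B^{-1}B)\times O(\mathsf{d})$, so the inner integral over all of $O(\mathsf{d})$ absorbs the rotation and only the symmetric-difference term $\lVert f\rVert_\infty\,\mathrm{vol}((r_B^{-1}B)\Delta B)\,\mathrm{vol}(O(\mathsf{d}))$ survives.
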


\begin{proof}[Sketch of Proof]
As the arguments are similar to the case of $T=\mathbb R^{\mathsf d}$, we only describe the modifications. We can estimate
\begin{displaymath}
\begin{split}
|w_{r_BP}(B)&-w_P(B)|=\,\Big|\int_{B\times O(\mathsf{d})}{\rm d}x f(x^{-1}r_BP)-\int_{B\times O(\mathsf{d})}{\rm d}x f(x^{-1}P)\Big|\\
=&\,\Big|\int_{r_B^{-1}(B\times O(\mathsf{d}))}{\rm d}x f(x^{-1}P)-\int_{B\times O(\mathsf{d})}{\rm d}x f(x^{-1}P)\Big|\\
\le& \, ||f||_\infty\cdot\mathrm{vol}((r_B^{-1}(B\times O(\mathsf{d})))\Delta (B\times O(\mathsf{d})))\\
= &\,  ||f||_\infty\cdot\mathrm{vol}((r_B^{-1}B)\Delta B)\cdot\mathrm{vol}(O(\mathsf{d})),
\end{split}
\end{displaymath}
where used left invariance of the Haar measure in the first equation. Now one can argue as in the proof of Lemma~\ref{lem:wigwf2}.
\end{proof}

The proof of the following result is analogous to the proof of Theorem~\ref{thm:alruni}.

\begin{theorem}\label{thm:alwrunied}
Let $\widehat P\in \mathcal{P}_r$ be almost linearly wiggle--repetitive. Then $X_{\widehat P}$ is uniquely ergodic w.r.t.~the Euclidean group $T=E(\mathsf{d})$.\qed
\end{theorem}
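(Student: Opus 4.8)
The plan is to mirror the proof of Theorem~\ref{thm:alruni}, replacing the weight function of Lemma~\ref{lem:wigwf1} by the one of Lemma~\ref{lem:wigwf}, which is a weight function with respect to $\widetilde d_B$ rather than $d_B$. First I would recall that, by Remark~\ref{rem:alwr}, almost linear wiggle--repetitivity of $\widehat P$ is a statement about $\widetilde d_B$, and that Proposition~\ref{mainlemma} iii) remains valid in this setting: indeed the proof of Proposition~\ref{mainlemma} only uses the four weight-function axioms together with the box-covering characterisation of almost linear repetitivity, and the latter has an evident analogue phrased with $\widetilde d_{yB_U}(yP,P)<\varepsilon$ in place of $d_{yB_U}(yP,P)<\varepsilon$. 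Hence for the weight function $w_P(B)=\int_{B\times O(\mathsf d)}\mathrm dx\, f(x^{-1}P)$ of Lemma~\ref{lem:wigwf}, the averages $w_P(B_n)/\mathrm{vol}(B_n)$ converge to a limit $f$ independent of $P\in X_{\widehat P}$, uniformly in the centre, along any sequence of squarish boxes with $\omega(B_n)\to\infty$.

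Next I would set up the van Hove sequence for $T=E(\mathsf d)=\mathbb R^\mathsf d\rtimes O(\mathsf d)$. As in the proof of Theorem~\ref{thm:alruni}, let $D_s$ be the closed $s$-ball about the origin in $\mathbb R^\mathsf d$ and consider $(D_n\times O(\mathsf d))_{n\in\mathbb N}$; since $(D_n)_{n}$ has the van Hove property in $\mathbb R^\mathsf d$ and the centred balls are inversion invariant, $(D_n\times O(\mathsf d))_n$ is a van Hove sequence in $E(\mathsf d)$, so by the uniform ergodic theorem \cite[Thm~2.16]{MR} unique ergodicity w.r.t.~$T=E(\mathsf d)$ follows once we show that
\begin{displaymath}
J(f,P,D_n):=\frac{1}{\mathrm{vol}(D_n\times O(\mathsf d))}\int_{D_n\times O(\mathsf d)}\mathrm dx\, f(x^{-1}P)
\end{displaymath}
converges for all $f\in C(X_{\widehat P})$ and all $P\in X_{\widehat P}$ to a $P$-independent limit. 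Here I would again use inversion invariance of the Haar measure and of the centred set $D_n\times O(\mathsf d)$.

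Then I would reduce $J(f,P,D_n)$ to a sum over squarish boxes exactly as in Theorem~\ref{thm:alruni}: using $\mathrm{vol}(\partial^{D_{\sqrt n}}D_n)/\mathrm{vol}(D_n)\to 0$, decompose $D_n=B^{(n)}\cup\bigcup_i B_i^{(n)}$ with $B_i^{(n)}\in\mathcal B(\sqrt n)$ and $B^{(n)}\subseteq\partial^{D_{\sqrt n}}D_n$ of negligible relative volume, observe the factorisation $\mathrm{vol}(D_n\times O(\mathsf d))=\mathrm{vol}(D_n)\cdot\mathrm{vol}(O(\mathsf d))$ and the identity $w_P(A)=\int_{A\times O(\mathsf d)}\mathrm dx\,f(x^{-1}P)$, and apply the uniform convergence from Proposition~\ref{mainlemma} iii) (valid for $\widetilde d_B$) to get $|J(f,P,B_i^{(n)})-J(f)|<\varepsilon$ for $n\ge n_0(\varepsilon)$ uniformly in $i$ and $P$. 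A routine $\varepsilon$-estimate then gives $|J(f,P,D_n)-J(f)|\le(1+|J(f)|+\|f\|_\infty)\varepsilon$ for $n\ge n_0$, hence $\lim_n J(f,P,D_n)=J(f)$ for all $P$, which is the claim. The only genuinely new input beyond what is already proved is the verification that Lemma~\ref{lem:wigwf} indeed supplies a weight function with respect to $\widetilde d_B$ and that Proposition~\ref{mainlemma} iii) carries over to that metric; the main obstacle — and it is a mild one, already essentially handled in the proof of Lemma~\ref{lem:wigwf2} — is controlling the symmetric-difference term $\mathrm{vol}((r_B^{-1}B)\Delta B)$ uniformly over squarish boxes of all scales, which is done by scaling down to $\mathcal B(1)$ and a dominated-convergence argument. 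Everything else is a transcription of the $T=\mathbb R^\mathsf d\rtimes H$ argument with $H=O(\mathsf d)$ and $d_B$ replaced by $\widetilde d_B$.
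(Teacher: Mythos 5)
Your proposal is correct and follows exactly the route the paper intends: the paper proves Theorem~\ref{thm:alwrunied} by declaring it analogous to Theorem~\ref{thm:alruni}, with Lemma~\ref{lem:wigwf} supplying the weight function w.r.t.~$\widetilde d_B$ and the remark before Theorem~\ref{thm:alwruni} recording that Proposition~\ref{mainlemma}~iii) carries over to almost linearly wiggle--repetitive point sets and that metric. You have simply written out the details the paper leaves implicit, including the correct identification of the symmetric-difference estimate as the only genuinely new ingredient.
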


\section{Dynamical properties of substitution tilings}\label{sec:dpst}

As an application of the previous results, we consider dynamical systems associated to the tiling spaces $X_\sigma$ of Section~\ref{chap:subs}. 
Whereas Theorem~\ref{unicirc} includes known results such as \cite{R95b}, \cite[Thm~3.1]{So97}, and \cite[Prop~3.1]{FS3}, our proof is 
 alternative to the previous approaches.

Consider the topology $LMT_G$ on $X_\sigma$ where two tilings are close, if they agree -- after some 
shift in $G$ close to the identity -- on a large ball about the origin. It is generated by a metric as in Section~\ref{sec1}. The group 
$G$ has a canonical left action on $X_\sigma$, which is continuous w.r.t.~$LMT_G$, 
compare \cite{bsj, So97} for $G=\mathbb R^\mathsf{d}$.  If $\sigma$ has FLC w.r.t.~$G$, then $X_\sigma$ is compact by standard 
reasoning,  see e.g.~\cite{RaWo92}.  

Point sets may be constructed from tilings as follows.
A \emph{prototile decoration} $\Phi$ on $(\mathcal F,G)$ is given by finite sets $\Phi(\{S_1\}),\ldots, \Phi(\{S_m\})$ in $\mathbb R^\mathsf{d}$ such that 
$\Phi(\{S_i\})\subset \mathrm{supp}(S_i)$ for all $i$. A prototile decoration $\Phi$ on $(\mathcal F,G)$ is \emph{structure--preserving} if
there is a natural extension to a map $\Phi:\mathcal C_{(\mathcal F,G)}\to\mathcal{P}_r$ for some $r>0$,  which is one-to-one. Here we call an extension of a prototile decoration $\Phi$ natural if $\Phi(\{S\})=g\Phi(\{S_i\})$ 
for tiles $S=gS_i$ and if $\Phi(\mathcal C)=\bigcup_{S\in \mathcal C}\Phi(\{S\})$ for $(\mathcal{F},G)$-packings $\mathcal C$.
If $\Phi$ is a structure--preserving prototile decoration, then $\mathcal T$ and $\Phi(\mathcal T)$ are mutually locally derivable (MLD) for every tiling $\mathcal T\in \mathcal C_{(\mathcal F,G)}$, see \cite[Def~2.5]{FS07} for the concept of MLD in our situation.

\begin{proposition}\label{prop:deco}
Let $\sigma$ be a substitution on $(\mathcal F,G)$, such that any prototile has a finite symmetry group centered in the interior of the prototile. Then there exists a structure--preserving prototile decoration on $(\mathcal F,G)$.
\end{proposition}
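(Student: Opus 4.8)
The plan is to decorate each prototile with a small, rigid point cluster placed deep in the interior of the tile. Interiority and smallness will force the clusters coming from distinct tiles of a packing to be disjoint and uniformly separated, so that the decomposition of the associated point set into clusters is intrinsic; invariance of each cluster under the symmetry group of its tile will make the natural extension well defined; and rigidity of the clusters, together with pairwise non-congruence of the finitely many prototile clusters, will let one read the tile off from its cluster. Injectivity of $\Phi$ then reduces to matching clusters. I expect the delicate point to be the construction of the clusters themselves: each must be invariant under the symmetry group of its prototile yet admit no further $G$-symmetry, and the finitely many clusters must be mutually non-congruent; this is a transversality argument and the only subtlety is making sure the ``unwanted'' isometries range over a controlled finite set rather than over the possibly non-discrete group $G$.

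To set this up, write $H_i:=\{g\in G\,|\,gS_i=S_i\}$ for the finite symmetry group of $S_i$ and let $c_i\in\mathrm{int}(S_i)$ be the point it fixes; translating $c_i$ to the origin identifies $H_i$ with a finite subgroup of $O(\mathsf d)$ acting about $c_i$. Since $\mathcal F$ is finite, fix $d_0>0$ with $\overline B_{d_0}(c_i)\subseteq\mathrm{int}(S_i)$ for all $i$. Now choose, for each $i$, a finite set $F_i\subseteq\overline B_{d_0/8}(c_i)$ that is $H_i$-invariant and affinely spans $\mathbb R^\mathsf d$, and put $\Phi(\{S_i\}):=F_i$. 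A standard dimension count lets us choose the $F_i$ so that, in addition, (a) the $E(\mathsf d)$-congruence classes of $F_1,\dots,F_m$ are pairwise distinct, and (b) $\{g\in G\,|\,gF_i=F_i\}=H_i$ for every $i$: an isometry of $\mathbb R^\mathsf d$ stabilising an affinely spanning finite set is determined by the permutation it induces on that set, so only finitely many isometries can possibly stabilise $F_i$; the $H_i$-invariant configurations for which one of these extra candidates really does stabilise $F_i$, or for which $F_i$ happens to be congruent to some $F_j$ with $j\ne i$, form a closed subset of lower dimension in the finite-dimensional space of admissible configurations, and can thus be avoided.

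With this choice the natural extension is well defined: if a tile satisfies $S=gS_i=g'S_j$, then $i=j$ and $g^{-1}g'\in H_i$, whence $gF_i=g'F_i$ by $H_i$-invariance of $F_i$, so $\Phi(\{S\}):=gF_i$ is unambiguous, and we set $\Phi(\mathcal C):=\bigcup_{S\in\mathcal C}\Phi(\{S\})$ for $\mathcal C\in\mathcal C_{(\mathcal F,G)}$. Each cluster $\Phi(\{S\})$ lies within distance $d_0/8$ of a point $c(S)\in\mathrm{int}(S)$ with $\mathrm{dist}(c(S),\partial S)\ge d_0$, so for two distinct tiles $S\ne S'$ of a packing the segment from $c(S)$ to $c(S')$ must leave $S$, giving $\|c(S)-c(S')\|\ge d_0$ and hence distance at least $3d_0/4$ between the two clusters. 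As each $F_i$ moreover has a positive minimal interpoint distance, there is a fixed $r>0$ with $\Phi(\mathcal C)\in\mathcal P_r$ for all packings $\mathcal C$, so the natural extension indeed maps into $\mathcal P_r$.

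It remains to prove injectivity. Suppose $P:=\Phi(\mathcal C)=\Phi(\mathcal C')$, and declare two points of $P$ equivalent when they are linked by a chain of points of $P$ with consecutive distances $<d_0/2$. Since each cluster has diameter at most $d_0/4$ and distinct clusters are at least $3d_0/4$ apart, the equivalence classes of $P$ are exactly the clusters $\{\Phi(\{S\})\,|\,S\in\mathcal C\}$, and equally the clusters $\{\Phi(\{S'\})\,|\,S'\in\mathcal C'\}$; hence these two collections of subsets of $P$ coincide. For any cluster in this common collection, say $F=gF_i=g'F_j$, property (a) forces $i=j$ and then property (b) gives $g^{-1}g'\in H_i$, so $gS_i=g'S_j$; thus every tile of $\mathcal C$ equals a tile of $\mathcal C'$ and vice versa, i.e.\ $\mathcal C=\mathcal C'$. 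Therefore $\Phi$ is a structure-preserving prototile decoration on $(\mathcal F,G)$.
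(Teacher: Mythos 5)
Your proof is correct and follows essentially the same route as the paper: both decorate each prototile with a small $H_i$-invariant point cluster placed deep in its interior, so that clusters of distinct tiles in a packing are uniformly separated and the partition of $\Phi(\mathcal C)$ into clusters is intrinsic, and both recover the tile from its cluster because the clusters are mutually non-congruent and admit no symmetries beyond $H_i$. The only real difference is that the paper constructs the clusters explicitly (as the fixed point $y_i$ together with the orbit $H_i(y_i+e_1)$ taken at two $i$-dependent scales, which directly encodes the index $i$ and a reference direction), whereas you produce them by a genericity/transversality argument; both work, and your chain-equivalence verification that the cluster decomposition can be reconstructed from $\Phi(\mathcal C)$ is actually spelled out in more detail than in the paper.
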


\begin{proof}
For $i\in\{1,\ldots,m\}$, denote  by $H_i\subset G$ the finite symmetry group $\{g\in G\, |\, gS_i=S_i\}$ of $S_i$ and choose a fixed point $y_i\in\mathrm{int}(S_i)$ common to all $g\in H_i$. With $e_1$ the unit vector in the first coordinate direction, consider the finite point set
\begin{displaymath}
P_i:=\{y_i\}\cup \frac{i}{2m+1} H_i (y_i+e_1) \cup i H_i (y_i+e_1).
\end{displaymath}
Clearly $P_i$ has the same symmetry group as $S_i$, and it is possible to recover $y_i$, $i$ and $e_1$ from $P_i$. 
Next, choose $D>0$ such that $B_D(y_i)\subset \mathrm{supp}(S_i)$ for all $i$ and define the prototile decoration
\begin{displaymath}
\Phi(\{S_i\}):=\frac{D}{2m} P_i \subset \mathrm{supp}(S_i).
\end{displaymath}
The natural extension to tiles is well--defined, as it respects the tile symmetries, i.e., $g\Phi(\{S_i\})=\Phi(\{S_i\})$ for $gS_i=S_i$. 
Denote by $r>0$ a radius of discreteness common to all $\Phi(\{S_i\})$. The natural extension to packings $\Phi:\mathcal C_{(\mathcal F,G)}\to\mathcal{P}_r$ 
is well--defined, since any two images of different tiles in a packing have a distance larger than $r$. The map $\Phi$ is indeed one-to-one, since it is one-to-one on tiles, and since the diameters of the tile images are chosen small enough, such that every packing image can be partitioned into tile images, and since this partition can be reconstructed from the packing image.
\end{proof}

\begin{remark}
The condition of the above proposition is satisfied for any substitution with convex prototiles and finite symmetry groups, as can be inferred from the proof of \cite[Thm~2.4]{M72}. It may also be satisfied if the convexity constraint is somewhat relaxed at the boundary of the prototiles. A simple example of a tiling with a non-convex prototile, which admits a structure--preserving prototile decoration, is the chair tiling.
\end{remark}

\begin{theorem}\label{unicirc}
Fix $G=\mathbb R^\mathsf{d}\rtimes H$, with $H$ a subgroup of $O(\mathsf{d})$. Let $\sigma$ be a primitive substitution on $(\mathcal F,G)$, which admits a structure--preserving prototile decoration. Assume that $X_\sigma$ is non--empty and equipped with the topology $LMT_G$. Assume that $\sigma$ has FLC w.r.t.~$G$ and DTO w.r.t.~$H$. Then for every group $F$ satisfying $\mathbb R^\mathsf{d}\subseteq F\subseteq G$, the dynamical system $(X_\sigma,F)$ is minimal and uniquely ergodic.
\end{theorem}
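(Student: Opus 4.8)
The plan is to transport the statement to the point--set setting of Sections~\ref{sec:alm-rep}--\ref{sec:alm-wig-rep} by means of a structure--preserving prototile decoration, and then to invoke Theorem~\ref{charmin} and Theorem~\ref{thm:alwruni}. Fix a tiling $\mathcal T_0\in X_\sigma$ and a structure--preserving prototile decoration $\Phi$ on $(\mathcal F,G)$ (which exists, say, under the hypotheses of Proposition~\ref{prop:deco}), and set $\widehat P:=\Phi(\mathcal T_0)\in\mathcal P_r$. Since $\mathcal T$ and $\Phi(\mathcal T)$ are MLD for every $\mathcal T\in X_\sigma$, and since $\Phi$ is one--to--one and $G$--equivariant by naturality of the extension, the map $\Phi$ induces a $G$--equivariant homeomorphism of $(X_\sigma,LMT_G)$ onto its image $Y:=\Phi(X_\sigma)\subseteq\mathcal P_r$: local derivability in both directions yields continuity of $\Phi$ and of its inverse, compactness of $X_\sigma$ (which holds by FLC w.r.t.~$G$) makes $\Phi$ a homeomorphism onto $Y$, and $Y$ is then compact and $G$--invariant. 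It therefore suffices to show that $(Y,F)$ is minimal and uniquely ergodic for every $F$ with $\mathbb R^\mathsf{d}\subseteq F\subseteq G$.

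Next I would pin down the repetitivity of $\widehat P$. By Theorem~\ref{thm:pin-wigrep} the tiling $\mathcal T_0$ is linearly wiggle--repetitive, and since $\Phi$ is MLD with some finite derivability radius $\varrho$, the $r$--patterns of $\widehat P$ are determined by the $(r+\varrho)$--patches of $\mathcal T_0$ and conversely; transporting linear wiggle--repetitivity through $\Phi$ (compare Remark~\ref{rem:alwr}) shows that $\widehat P$ is almost linearly wiggle--repetitive. In particular $\widehat P$ is almost repetitive w.r.t.~$\mathbb R^\mathsf{d}$ by Remark~\ref{rem:alwr}, so by Theorem~\ref{charmin} the point--set hull $X_{\widehat P}=\overline{\mathbb R^\mathsf{d}\widehat P}$ is $\mathbb R^\mathsf{d}$--minimal, and by Theorem~\ref{thm:alwruni} it is uniquely ergodic w.r.t.~$T=\mathbb R^\mathsf{d}$; denote the unique $\mathbb R^\mathsf{d}$--invariant Borel probability measure on $X_{\widehat P}$ by $\mu$.

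The crucial step is then the identification $Y=X_{\widehat P}$. The inclusion $X_{\widehat P}\subseteq Y$ is clear, since $Y$ is compact, $\mathbb R^\mathsf{d}$--invariant and contains $\widehat P$. For the reverse inclusion I would show that $(X_\sigma,\mathbb R^\mathsf{d})$ is minimal, which by the homeomorphism $\Phi$ forces $Y=\overline{\mathbb R^\mathsf{d}\widehat P}=X_{\widehat P}$. Here DTO enters: given $\mathcal T,\mathcal T'\in X_\sigma$ and $\varepsilon>0$, the patch of $\mathcal T$ inside a large ball is legal, and the argument in the proof of Theorem~\ref{thm:pin-wigrep} (primitivity together with Lemma~\ref{lem:suptile2}, Lemma~\ref{lem:suptile} and DTO w.r.t.~$H$) shows that this patch occurs in $\mathcal T'$ in $\varepsilon$--dense orientations in $H$; choosing the orientation $\varepsilon$--close to the one the patch has in $\mathcal T$ produces $v\in\mathbb R^\mathsf{d}$ and a rotation of distance less than $\varepsilon$ to the identity witnessing that $\mathcal T'-v$ agrees with $\mathcal T$, up to a small element of $G$, on that ball. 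The symmetric argument (using that $\mathcal T'$ is also in $X_\sigma$) handles the other direction required by the $LMT_G$--metric, so $\mathcal T$ lies in the $LMT_G$--closure of $\mathbb R^\mathsf{d}\mathcal T'$. Hence $(X_\sigma,\mathbb R^\mathsf{d})$ is minimal, $Y=X_{\widehat P}$, and $(Y,\mathbb R^\mathsf{d})$ is minimal and uniquely ergodic with invariant measure $\mu$.

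Finally I would pass from $\mathbb R^\mathsf{d}$ to an arbitrary $F$ with $\mathbb R^\mathsf{d}\subseteq F\subseteq G$. Minimality of $(Y,F)$, equivalently of $(X_\sigma,F)$, is immediate from $\mathbb R^\mathsf{d}\subseteq F$ and minimality of $(Y,\mathbb R^\mathsf{d})$, since $F$--orbits contain $\mathbb R^\mathsf{d}$--orbits. For unique ergodicity, any $F$--invariant Borel probability measure on $Y$ is in particular $\mathbb R^\mathsf{d}$--invariant, hence equals $\mu$, so there is at most one. That $\mu$ itself is $F$--invariant follows since $\mathbb R^\mathsf{d}$ is normal in $G$, hence in $F$: for $g\in F$ the pushforward $g_*\mu$ is a Borel probability measure on the $g$--invariant set $Y$, and for $v\in\mathbb R^\mathsf{d}$ one has $v_*g_*\mu=g_*(g^{-1}vg)_*\mu=g_*\mu$ because $g^{-1}vg\in\mathbb R^\mathsf{d}$, so $g_*\mu$ is $\mathbb R^\mathsf{d}$--invariant and uniqueness forces $g_*\mu=\mu$. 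Thus $(X_\sigma,F)$ is uniquely ergodic, and together with the minimality established above this proves the theorem. I expect the main obstacle to be the identification $Y=X_{\widehat P}$ in the third paragraph: transporting repetitivity through $\Phi$ only controls the $\mathbb R^\mathsf{d}$--orbit closure of $\widehat P$, and it is precisely at this point that DTO is needed to absorb the rotational part of the $G$--action into arbitrarily small wiggles.
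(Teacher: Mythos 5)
Your proposal is correct and follows essentially the same route as the paper: decorate a tiling $\mathcal T_0\in X_\sigma$ via $\Phi$ to obtain a point set $\widehat P$, transport linear wiggle--repetitivity from Theorem~\ref{thm:pin-wigrep} through the MLD map, apply Theorem~\ref{charmin} and Theorem~\ref{thm:alwruni} to $X_{\widehat P}$, use DTO to identify the tiling space with the $\mathbb R^\mathsf{d}$--orbit closure, and then pass to intermediate groups $F$. Two sub-steps differ in a way worth noting. First, where the paper disposes of the identification with the one-line claim ``$X_{\mathcal T}=X_\sigma$ due to DTO'', you spell out the underlying argument (legal patches occur in $\varepsilon$--dense orientations by Lemma~\ref{lem:suptile2} and DTO, so any $\mathcal T'\in X_\sigma$ is $LMT_G$--approximated by translates of $\mathcal T_0$); this is the same mechanism, made explicit. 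Second, for unique ergodicity of $(X_\sigma,F)$ the paper establishes \emph{existence} of an $F$--invariant measure via compactness (Krylov--Bogolyubov, citing \cite{W82} and \cite{MR}) and then gets uniqueness from $\mathbb R^\mathsf{d}$--invariance; you instead observe that normality of $\mathbb R^\mathsf{d}$ in $G$ forces $g_*\mu$ to be $\mathbb R^\mathsf{d}$--invariant for every $g\in F$, so the unique $\mathbb R^\mathsf{d}$--invariant measure $\mu$ is automatically $F$--invariant. Your argument is self-contained and avoids any appeal to amenability; both are valid. One small point you leave implicit: to apply Theorem~\ref{charmin} and Theorem~\ref{thm:alwruni}, which live in the local rubber topology, to the image $Y=\Phi(X_\sigma)$ carrying $LMT_G$, you should note (as the paper does) that $LMT_G$ and the local rubber topology coincide on the image because $\widehat P$ inherits FLC from $\mathcal T_0$.
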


\begin{remark}
A simple example is the chair tiling with $G=\mathbb R^\mathsf{2}\rtimes D_4$. Here $LMT_G$ equals $LMT_{\mathbb R^\mathsf{2}}$, since $D_4$ is discrete.
Other examples are  the pinwheel tiling with $G=E(\mathsf{2})$ and the quaquaversal tiling with $G=\mathbb R^\mathsf{3}\rtimes SO(\mathsf{3})$.
\end{remark}

\begin{proof}
Fix a structure--preserving prototile decoration $\Phi$ on $(\mathcal F,G)$.  Take any $\mathcal T\in X_\sigma$ and define $P:=\Phi(\mathcal T)$. Then $P$ inherits (almost) linear wiggle--repetitivity from $\mathcal T$, since the chosen metrics on $O(\mathsf{d})$ are equivalent. In particular, $P$ is almost repetitive w.r.t.~$\mathbb R^\mathsf{d}$ by Remark \ref{rem:alwr}. 
Consider $X_P:=\overline{\{xP|x\in \mathbb R^\mathsf{d}\}}^{LMT_G}$ and note that $LMT_G$ equals $LRT$, as $P$ inherits FLC from $\mathcal T$. We conclude that $(X_P,\mathbb R^\mathsf{d})$ is minimal by  Lemma~\ref{lem:ar} and Theorem~\ref{charmin}, and that it is uniquely ergodic by Theorem~\ref{thm:alwruni}. 
Now consider $X_\mathcal{T}:=\overline{\{x\mathcal T| x\in \mathbb R^\mathsf{d}\}}^{LMT_G}$. Since $\Phi$ is structure--preserving, the restriction $\Phi: X_\mathcal{T}\to X_P$ is a homeomorphism satisfying $\Phi(g \mathcal T')=g\Phi(\mathcal T')$. Hence, by topological conjugacy, the dynamical system $(X_\mathcal{T},\mathbb R^\mathsf{d})$ is minimal and uniquely ergodic, too. But $X_\mathcal{T}=X_\sigma$ due to DTO. Thus $(X_\sigma, \mathbb R^\mathsf{d})$ is minimal and uniquely ergodic.

Now fix a group $F$ satisfying $\mathbb R^\mathsf{d}\subseteq F\subseteq G$. Since $X_\sigma$ is compact, $(X_\sigma,F)$ possesses at least one 
$F$--invariant ergodic probability measure, compare the proof of \cite[Cor~6.9.1]{W82} for $\mathbb Z$--actions and the proof of \cite[Theo~2.16]{MR}. 
Two such measures $\mu,\nu$ are in particular $\mathbb R^\mathsf{d}$--invariant. Hence $\mu=\nu$ by unique ergodicity w.r.t.~$\mathbb R^\mathsf{d}$, and $(X_\sigma,F)$ is uniquely ergodic.
For every $\mathcal T'\in X_\sigma$ its $F$--orbit is dense in $X_\sigma$ by minimality of $(X_\sigma, \mathbb R^\mathsf{d})$. Hence $(X_\sigma, F)$ is minimal, too.
\end{proof}

\section*{Acknowledgements}

We gratefully acknowledge discussions with Alexey Garber at an initial stage of this project. 
Special thanks are due to Daniel Lenz for a thorough discussion on results of this paper. We also thank the anonymous referees for 
valuable comments, corrections and hints on the literature which have improved the article. This work is partly 
supported by  the German Science Foundation DFG within CRC701.  



\begin{thebibliography}{BaaGM}
\frenchspacing

\bibitem[AMaN]{amn}
  \au{H.}{Abels}, \au{A.}{Manoussos}\et\lau{G.}{Noskov}
  \ti{Proper actions and proper invariant metrics}
  \z{J.~London Math.~Soc.}{83}{619--636}{2011}

\bibitem[BaLen]{BaLe05}
  \au{M.}{Baake}\et\lau{D.}{Lenz}
  \ti{Deformation of Delone dynamical systems and pure point diffraction}
  \z{J.~Fourier Anal.~Appl.}{11}{125--150}{2005}

\bibitem[BaSchJ]{bsj}
  \au{M.}{Baake}, \au{M.}{Schlottmann}\et\lau{P.D.}{Jarvis}
  \ti{Quasiperiodic tilings with tenfold symmetry and equivalence
    with respect to local derivability}
  \z{J.~Phys.~A: Math.~Gen.}{24}{4637--4654}{1991}

\bibitem[BelBenGa]{BBG}
\au{J.}{Bellissard}, \au{R.}{Benedetti}\et\lau{J.-M.}{Gambaudo}
\ti{Spaces of tilings, finite telescopic approximations and gap-labeling} 
\z{Comm.~Math.~Phys.}{261}{1--41}{2006} 

\bibitem[BoTa]{bt}
  \au{E.}{Bombieri} \et \lau{J.E.}{Taylor}
  \ti{Quasicrystals, tilings and algebraic number theory: some preliminary
    connections}
  \z{Contemp.~Math.}{64}{241--264}{1987}

\bibitem[ConRa]{CoRa98}
  \au{J.H.}{Conway}\et\lau{C.}{Radin}
  \ti{Quaquaversal tilings and rotations}
  \z{Invent.~math.}{132}{179--188}{1998}  
  
\bibitem[Cord]{C}
  \lau{C.}{Corduneanu}
  \bti{Almost Periodic Functions}
  \pub{Wiley Interscience}{New York}{1968}

\bibitem[CortSo]{CS}
 \au{M.I.}{Cortez}\et\lau{B.}{Solomyak}
 \ti{Invariant measures for non-primitive tiling substitutions}
 \z{J.~Anal.~Math.}{115}{293--342}{2011}

\bibitem[DamLe]{DaLe01}
  \au{D.}{Damanik}\et\lau{D.}{Lenz}
  \ti{Linear repetitivity. I.~Uniform subadditive ergodic theorems and applications}
  \z{Discrete Comput.~Geom.}{26}{411--428}{2001}

\bibitem[Dan]{dan}
  \lau{L.}{Danzer}
  \ti{Quasiperiodicity: local and global aspects}
  in: Group theoretical methods in physics (Moscow, 1990), 561--572, 
  Lecture Notes in Phys., 382, Springer, Berlin (1991).

\bibitem[Fre]{F08}
  \lau{D.}{Frettl\"oh}
  \ti{Substitution tilings with statistical circular symmetry}
  \z{Eur.~J.~Comb.}{29}{1881--1893}{2008}

 \bibitem[FreSi]{FS07}
  \au{D.}{Frettl\"oh}\et\lau{B.}{Sing}
  \ti{Computing modular coincidences for substitution tilings and point sets}
  \z{Discrete Comput. Geom.}{37}{381--407}{2007}

\bibitem[FraRo]{FR}
\au{N.}{Priebe Frank}\et\lau{E.A.}{Robinson}
\ti{Generalized $\beta$-expansions, substitution tilings, and local finiteness}
 \z{Trans. Amer. Math. Soc.}{360}{1163--1177}{2008} 

\bibitem[FraSa1]{FS}
\au{N.}{Priebe Frank}\et\lau{L.}{Sadun}
\ti{Topology of some tiling spaces without finite local complexity}
 \z{Discrete Contin.~Dyn.~Syst.}{23}{847--865}{2009} 

\bibitem[FraSa2]{FS2}
 \au{N.}{Priebe Frank}\et\lau{L.}{Sadun}
 \ti{Fusion: a general framework for hierarchical tilings of $\mathbb R^d$}
 preprint \texttt{arXiv:1101.4930}.

\bibitem[FraSa3]{FS3}
 \au{N.}{Priebe Frank}\et\lau{L.}{Sadun}
 \ti{Fusion tilings with infinite local complexity}
 preprint \texttt{arXiv:1201.3911}.

\bibitem[Go]{G} 
\lau{W.H.}{Gottschalk}
\ti{Orbit--closure decompositions and almost periodic properties}
\z{Bull.~Amer.~Math.~Soc.}{50}{915--919}{1944}

\bibitem[GrSh]{gs}
  \au{B.}{Gr\"unbaum}\et\lau{G.C.}{Shephard}
  \bti{Tilings and Patterns}
  \pub{Freeman}{New York}{1987}

\bibitem[LaPl]{LaPl03} 
  \au{J.C.}{Lagarias}\et\lau{P.A.B.}{Pleasants}
  \ti{Repetitive Delone sets and quasicrystals}
  \z{Ergodic Theory Dynam.~Systems}{23}{831--867}{2003}
  
\bibitem[LeeMoSo]{LeMo02}
  \au{J.-Y.}{Lee}, \au{R.V.}{Moody}\et\lau{B.}{Solomyak}
  \ti{Pure point dynamical and diffraction spectra}
  \z{Ann.~H.~Poincar\'e}{3}{1--17}{2002}

\bibitem[LenRi]{LeRi07}
  \au{D.}{Lenz}\et\lau{C.}{Richard}
  \ti{Pure point diffraction and cut and project schemes for measures: the smooth case}
  \z{Math.~Z.}{256}{347--378}{2007}
  
\bibitem[LuPl]{lup}
  \au{W.F.}{Lunnon} \et \lau{P.A.B.}{Pleasants}
  \ti{Quasicrystallographic tilings}
  \z{J.~Math.~Pures et Appl.}{66}{217--263}{1987}

\bibitem[Mi]{M72}
  \lau{W.}{Miller}
  \bti{Symmetry Groups and their Applications} 
  \pub{Academic Press}{New York}{1972}

\bibitem[MoHe1]{MH38}
\au{M.}{Morse} \et \lau{G.A.}{Hedlund}
 \ti{Symbolic dynamics}
 \z{Amer.~J.~Math.}{60}{815--866}{1938}

\bibitem[MoHe2]{MH40}
\au{M.}{Morse} \et \lau{G.A.}{Hedlund}
 \ti{Symbolic dynamics II}
 \z{Amer.~J.~Math.}{62}{1--42}{1940}

\bibitem[M\"uRi]{MR}
  \au{P.}{M\"uller}\et\lau{C.}{Richard}
  \ti{Ergodic properties of randomly coloured point sets}
 \z{Canad.~J.~Math.}{65}{349--402}{2013}

\bibitem[Ra]{R95b}
  \lau{C.}{Radin}
  \ti{Space tilings and substitutions}
  \z{Geom.~Dedicata}{55}{257--264}{1995}

\bibitem[RaWo]{RaWo92}
  \au{C.}{Radin}\et\lau{M.}{Wolff}
  \ti{Space tilings and local isomorphism}
  \z{Geom.~Dedicata}{42}{355--360}{1992}  

\bibitem[Ro1]{Ro96}
\lau{E.A.}{Robinson Jr.}
\ti{The dynamical properties of Penrose tilings} 
\z{Trans.~Amer.~Math.~Soc.}{348}{4447--4464}{1996} 

\bibitem[Ro2]{Ro04}
\lau{E.A.}{Robinson Jr.}
\ti{Symbolic dynamics and tilings of $\mathbb R^d$} 
\bti[Proc. Sympos. Appl. Math. 60]{Symbolic dynamics and its applications}
\pub[81--119]{Amer. Math. Soc.}{Providence, RI}{2004} 

\bibitem[Ru]{Ru89}
\lau{D.J.}{Rudolph}
\ti{Markov tilings of $\mathbb R^\mathsf{n}$ and representations of $\mathbb R^\mathsf{n}$ actions} 
\z{Contemp.~Math.}{94}{271--290}{1989} 

\bibitem[Sa]{Sa1}
  \lau{L.}{Sadun}
  \ti{Some generalizations of the pinwheel tiling}
  \z{Discrete Comput.~Geom.}{20}{79--110}{1998} 

\bibitem[So1]{S}
  \lau{B.}{Solomyak}
  \ti{Nonperiodicity implies unique composition for self-similar translationally finite tilings}
  \z{Discrete Comput.~Geom.}{20}{265--279}{1998} 

\bibitem[So2]{So97}
  \lau{B.}{Solomyak}
  \ti{Dynamics of self-similar tilings}
  \z{Ergodic Theory Dynam.~Systems}{17}{695--738}{1997}
  \ti{Corrections to: ``Dynamics of self-similar tilings''}
  \z{Ergodic Theory Dynam.~Systems}{19}{1685}{1999}

\bibitem[Th]{thu}
  \lau{W.}{Thurston} 
  \ti{Groups, tilings, and finite state automata} 
  AMS Colloquium Lecture Notes, Boulder, 1989.

\bibitem[Wa]{W82}
  \lau{P.}{Walters}
  \bti{An Introduction to Ergodic Theory} 
  \pub{Springer}{New York}{1982}
  
\bibitem[Y]{Yok05}
  \lau{T.}{Yokonuma}
  \ti{Discrete sets and associated dynamical
systems in a non-commutative setting}
  \z{Canad.~Math.~Bull.}{48}{302--316}{2005}
  
\end{thebibliography}
\end{document}